\pgfplotsset{compat=1.14} 
\newtheorem{theorem}{Theorem}
\newtheorem{lemma}{Lemma}
\newtheorem{proposition}{Proposition}
\newtheorem{corollary}{Corollary}
\newtheorem{definition}{Definition}
\newtheorem{assumption}{Assumption}
\newenvironment{breakablealgorithm}
  {
   \begin{center}
     \refstepcounter{algorithm}
     \hrule height.8pt depth0pt \kern2pt
     \renewcommand{\caption}[2][\relax]{
       {\raggedright\textbf{\ALG@name~\thealgorithm} ##2\par}%
       \ifx\relax##1\relax 
         \addcontentsline{loa}{algorithm}{\protect\numberline{\thealgorithm}##2}%
       \else 
         \addcontentsline{loa}{algorithm}{\protect\numberline{\thealgorithm}##1}%
       \fi
       \kern2pt\hrule\kern2pt
     }
  }{
     \kern2pt\hrule\relax
   \end{center}
  }
\definecolor{Gray}{gray}{0.9}
\newcolumntype{a}{>{\columncolor{Gray}}c}
\title{Sparse High-Dimensional Isotonic Regression}
\author{%
   David Gamarnik \thanks{http://web.mit.edu/gamarnik/www/home.html} \\
  Sloan School of Management\\
  Massachusetts Institute of Technology \\
  Cambridge, MA 02139\\
  \texttt{gamarnik@mit.edu} \\
  \And
   Julia Gaudio\thanks{http://web.mit.edu/jgaudio/www/index.html} \\
  Operations Research Center\\
  Massachusetts Institute of Technology\\
  Cambridge, MA 02139 \\
  \texttt{jgaudio@mit.edu} \\
}
\begin{document}

\maketitle

\begin{abstract}
We consider the problem of estimating an unknown coordinate-wise monotone function given noisy measurements, known as the isotonic regression problem. Often, only a small subset of the features affects the output. This motivates the \emph{sparse} isotonic regression setting, which we consider here. We provide an upper bound on the expected VC entropy of the space of sparse coordinate-wise monotone functions, and identify the regime of statistical consistency of our estimator. We also propose a linear program to recover the active coordinates, and provide theoretical recovery guarantees. We close with experiments on cancer classification, and show that our method significantly outperforms standard methods.
\end{abstract}

\section{Introduction}
Given a partial order $\preceq$ on $\mathbb{R}^d$, we say that a function $f : \mathbb{R}^d \to \mathbb{R}$ is \emph{monotone} if for all $x_1, x_2 \in \mathbb{R}^d$ such that $x_1 \preceq x_2$, it holds that $f(x_1) \leq f(x_2)$. In this paper, we study the univariate isotonic regression problem under the standard Euclidean partial order. Namely, we define the partial order $\preceq$ on $\mathbb{R}^d$ as follows: $x_1 \preceq x_2$ if $x_{1,i} \leq x_{2,i}$ for all $i \in \{1, \dots, d\}$. If $f$ is monotone according to the Euclidean partial order, we say $f$ is \emph{coordinate-wise monotone}. 

This paper introduces the \emph{sparse isotonic regression problem}, defined as follows. 
Write $x_1 \preceq_{A} x_2$ if $x_{1,i} \leq x_{2,i}$ for all $i \in A$. We say that a function $f$ on $\mathbb{R}^d$ is \emph{$s$-sparse coordinate-wise monotone} if for some set $A \subseteq [d]$ with $|A| = s$, it holds that $x_1 \preceq_{A} x_2 \implies f(x_1) \leq f(x_2)$. We call $A$ the \emph{set of active coordinates}. The sparse isotonic problem is to estimate the $s$-sparse coordinate-wise function $f$ from samples, knowing the sparsity level $s$ but not the set $A$. 
We consider two different noise models. In the Noisy Output Model, the input $X$ is a random variable supported on $[0,1]^d$, and $W$ is zero-mean noise that is independent from $X$. The model is $Y = f(X) + W$. We assume that $Y \in [0,1]$ almost surely. In the Noisy Input Model, $Y = f(X+ W)$, and we exclusively consider the classification problem, namely $f : \mathbb{R}^d \to \{0,1\}$. In either noise model, we assume that $n$ independent samples $(X_1, Y_1), \dots, (X_n, Y_n)$ are given. 

The goal of our paper is to produce an estimator $\hat{f}_n$ and give statistical guarantees for it. To our knowledge, the only work that provides statistical guarantees on isotonic regression estimators in the Euclidean partial order setting with $d \geq 3$ is the paper of Han et al (\cite{Han2017}). The authors give guarantees of the empirical $L_2$ loss, defined as $R(\hat{f}_n, f_0) = \mathbb{E}\left[ \frac{1}{n} \sum_{i=1}^n \left( \hat{f}_n(X_i) - f_0(X_i)\right)^2 \right]$, where the expectation is over the samples $X_1, \dots X_n$. 
In this paper, we expand on the work of Gamarnik (\cite{Gamarnik1999}), to the high-dimensional sparse setting. It is shown in \cite{Gamarnik1999} that the expected Vapnik-Chervonenkis entropy of the class of coordinate-wise monotone functions grows subexponentially. The main result of \cite{Gamarnik1999} is that when $X \in [0,1]^2$ and $Y \in [0,1]$ almost surely, \[\mathbb{P} \left( \Vert \hat{f}_n - f \Vert_2^2 > \epsilon \right) \leq e^{\left \lceil \frac{4}{\epsilon} \right \rceil \sqrt{n}-\frac{\epsilon^2 n}{256}},\] where $\hat{f}_n$ is a coordinate-wise monotone funtion, estimated based on empirical mean squared error. This result shows that the estimated function converges to the true function in $L_2$, almost surely (\cite{Gamarnik1999}). In this paper, we extend the work of \cite{Gamarnik1999} to the sparse high-dimensional setting, where the problem dimension $d$ and the sparsity $s$ may diverge to infinity as the sample size $n$ goes to infinity.

We propose two algorithms for the estimation of the unknown $s$-sparse coordinate-wise monotone function $f$. The \emph{simultaneous} algorithm determines the active coordinates and the estimated function values in a single optimization formulation. The \emph{two-stage} algorithm first determines the active coordinates via a linear program, and then estimates function values. The sparsity level is treated as constant or moderately growing. We give statistical consistency and support recovery guarantees for the Noisy Output Model, analyzing both the simultaneous and two-stage algorithms. We show that when $n = e^{\omega(s^2)}$ and $n = \omega\left(s \log(d)\right)$, the estimator $\hat{f}_n$ from the simultaneous procedure is statistically consistent. In particular, when the sparsity level is constant, the dimension can be much larger than the sample size. To analyze the two-stage approach, we show that if $n = \omega( \log(d))$ and $s$ is constant, then the linear program correctly recovers the support with high probability. We also give statistical consistency guarantees for the simultaneous and two-stage algorithms in the Noisy Input Model, assuming that the components of $W$ are independent. We show that in the regime where $s$ is constant and $n = \omega(\log(d))$, the estimators from both algorithms are consistent.

The isotonic regression problem has a long history in the statistics literature; see for example the books \cite{Barlow1973} and \cite{Robertson1988}. The emphasis of most research in the area of isotonic regression has been the design of algorithms: for example, the Pool Adjacent Violators algorithm (\cite{Kruskal1964}), active set methods (\cite{Best1990}, \cite{deLeeuw2009}), and the Isotonic Recursive Partitioning algorithm (\cite{Luss2012}). In addition to the univariate setting ($f: \mathbb{R}^d \to \mathbb{R}$), the multivariate setting ($f: \mathbb{R}^d \to \mathbb{R}^q$, $q \geq 2$) has also been considered; see e.g. \cite{Sasabuchi1983} and \cite{Sasabuchi1992}. In the multivariate setting, whenever $x_1 \preceq x_2$ according to some defined partial order $\preceq$, it holds that $f(x_1) \tilde{\preceq} f(x_2)$, where $\tilde{\preceq}$ is some other defined partial order. There are many applications for the coordinate-wise isotonic regression problem. For example, Dykstra and Robertson (1982) showed that isotonic regression could be used to predict college GPA from standardized test scores and high school GPA\nocite{Dykstra1982}. Luss et al (2012) applied isotonic regression to the prediction of baseball players' salaries, from the number of runs batted in and the number of hits\nocite{Luss2012}. Isotonic regression has found rich applications in biology and medicine, particularly to build disease models (\cite{Luss2012}, \cite{Schell1997}). 

The rest of the paper is structured as follows. Section \ref{sec:algorithms} gives the simultaneous and two-stage algorithms for sparse isotonic regression. Sections \ref{sec:noisy-output} and \ref{sec:noisy-input} provide statistical consistency and recovery guarantees for the Noisy Output and Noisy Input models. All proofs can be found in the supplementary material. In Section \ref{sec:experiments}, we provide experimental evidence for the applicability of our algorithms. 
We test our algorithm on a cancer classification task, using gene expression data. Our algorithm achieves a success rate of about $96\%$ on this task, significantly outperforming the $k$-Nearest Neighbors classifier and the Support Vector Machine.

\section{Algorithms for sparse isotonic regression}\label{sec:algorithms}
In this section, we present our two algorithmic approaches for sparse isotonic regression: the simultaneous and two-stage algorithms. Let $\mathcal{R}$ be the range of $f$. In the Noisy Output Model, $\mathcal{R} \subseteq [0,1]$, and in the Noisy Input Model, $\mathcal{R} = \{0, 1\}$.
\subsection{The Simultaneous Algorithm}
The simultaneous algorithm solves the following problem.
\begin{align}
&\min_{A, F} \sum_{i=1}^n \left(Y_i - F_i\right)^2 \label{eq:optimization-first}\\
\text{s.t. }&|A| = s \\
&F_i \leq F_j &\text{ if } X_i \preceq_A X_j\\
&F_i \in \mathcal{R} &\forall i \label{eq:optimization-last}
\end{align}
The estimated function $\hat{f}_n$ is determined by interpolating from the pairs $(X_1, F_1), \dots, (X_n, F_n)$ in a straightforward way. In particular, $\hat{f}_n(x) = \min\{y \in \mathcal{R} : X_i \preceq x \implies y \geq F_i\}$. We call this the ``min'' interpolation rule. The ``max'' interpolation rule is $\hat{f}_n(x) =  \max\{y \in \mathcal{R} : x \preceq X_i \implies y \leq F_i\}$. 
\begin{definition}
For inputs $X_1, \dots, X_n$, let $q(i,j,k) = 1$ if $X_{i,k} > X_{j,k}$, and $q(i,j,k) = 0$ otherwise.
\end{definition}
Problem \eqref{eq:optimization-first}-\eqref{eq:optimization-last} can be encoded as a single mixed-integer convex minimization. We refer to the resulting Algorithm \ref{alg:simultaneous} as Integer Programming Isotonic Regression (IPIR). The details of the algorithm are found in the Appendix.

\subsection{The Two-Stage Algorithm}
The two-stage algorithm estimates the active coordinates through a linear program, using these to then estimate the function values. The process of estimating the active coordinates is referred to as \emph{support recovery}. The active coordinates may be estimated all at once (Algorithm \ref{alg:simultaneous-recovery}) or sequentially (Algorithm \ref{alg:sequential-recovery}). Algorithm \ref{alg:simultaneous-recovery} is referred to as Linear Programming Support Recovery (LPSR) and Algorithm \ref{alg:sequential-recovery} is referred to as Sequential Linear Programming Support Recovery (S-LPSR). The details of the algorithms are given in the Appendix. 
The two-stage algorithm for estimating $\hat{f}_n$ first estimates the set of active coordinates using the LPSR or S-LPSR algorithm, and then estimates the function values. The results algorithm is referred to as Two Stage Isotonic Regression (TSIR), and is given in the Appendix (Algorithm \ref{alg:two-stage}).

\section{Results on the Noisy Output Model}\label{sec:noisy-output}
Recall the Noisy Output Model: $Y = f(X) + W$, where $f$ is an $s$-sparse coordinate-wise monotone function with active coordinates $A$. We assume throughout this section that $X$ is a uniform random variable on $[0,1]^d$, $W$ is a zero-mean random variable independent from $X$, and the domain of $f$ is $[0,1]^d$. We additionally assume that $Y \in [0,1]$ almost surely. Up to shifting and scaling, this is equivalent to assuming that $f$ has a bounded range and $W$ has a bounded support.
 
\subsection{Statistical consistency}\label{subsec:noisy-output-consistency}
In this section, we extend the results of \cite{Gamarnik1999}, in order to demonstrate the statistical consistency of the estimator produced by Algorithm \ref{alg:simultaneous}. The consistency will be stated in terms of the $L_2$ norm error.

\begin{definition}[$L_2$ Norm Error]
For an estimator $\hat{f}_n$, define 
\begin{align*}
\Vert \hat{f}_n - f \Vert_2^2 &\triangleq \int_{x \in [0, 1]^d} \left(\hat{f}_n(x) - f(x) \right)^2 dx.
\end{align*}
We call $\Vert \hat{f}_n - f \Vert_2$ the $L_2$ norm error. 
\end{definition}

\begin{definition}[Consistent Estimator]
Let $\hat{f}_n$ be a estimator for the function $f$. We say that $\hat{f}_n$ is \emph{consistent} if for all $\epsilon > 0$, it holds that
\[\lim_{n \to \infty} \mathbb{P} \left( \Vert \hat{f}_n - f \Vert_2 \geq \epsilon \right) \to 0.\]
\end{definition}
 
\begin{theorem}\label{thm:statistical-consistency} 
The $L_2$ error of the estimator $\hat{f}_n$ obtained from Algorithm \ref{alg:simultaneous} is upper bounded as
\[\mathbb{P}\left( \Vert \hat{f}_n - f \Vert_2 \geq \epsilon \right) \leq 6 \binom{d}{s} \exp\left\{\left(\left \lceil \frac{2^{11}}{\epsilon^2} \right \rceil - 1 \right) \left(2^s + 2\log(2) -1 \right) n^{\frac{s-1}{s}}  {-\frac{3 \epsilon^3 n}{41 \times 2^{10}}} \right\}.\] 
\end{theorem}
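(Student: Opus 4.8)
The plan is to follow the classical Vapnik--Chervonenkis route for proving uniform convergence of empirical risk, adapted to the sparse monotone function class, much as in \cite{Gamarnik1999} but paying the extra price of a union bound over the $\binom{d}{s}$ possible choices of the active set $A$. First I would fix a candidate active set $A$ of size $s$ and restrict attention to the class $\mathcal{F}_A$ of $s$-sparse coordinate-wise monotone functions with active coordinates $A$. Because only the coordinates in $A$ matter for monotonicity, on a sample of $n$ points the effective behavior is that of ordinary $s$-dimensional coordinate-wise monotone functions evaluated at $n$ points; the key combinatorial input is a bound on the expected VC entropy (the expected logarithm of the number of sign patterns, or of the number of distinct ``monotone labelings'' of the sample) of this class. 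I would invoke the extension of the Gamarnik bound to $s$ dimensions: the number of coordinate-wise monotone $\{$step$\}$-labelings of $n$ uniform points in $[0,1]^s$ grows like $\exp\{O(2^s n^{(s-1)/s})\}$ in expectation, which is the source of the $\left(2^s + 2\log 2 - 1\right) n^{(s-1)/s}$ term in the exponent. The combinatorial heart is to show that the number of antichains / the number of distinct restrictions of functions in $\mathcal{F}_A$ to the sample is controlled by the size of the largest antichain among the projected points, which by a Bollob\'as/Dilworth-type argument concentrates around $n^{(s-1)/s}$.

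Next I would set up the standard symmetrization and union bound. Writing the empirical squared loss $\widehat{R}_n(F) = \frac1n\sum_i (Y_i - F(X_i))^2$ and the population loss $R(F) = \mathbb{E}[(Y - F(X))^2]$, and using that the true $f$ minimizes $R$ while $\hat f_n$ minimizes $\widehat R_n$, the event $\{\|\hat f_n - f\|_2 \ge \epsilon\}$ is contained in the event that $\sup_{F \in \mathcal{F}} |\widehat R_n(F) - R(F)|$ exceeds a constant multiple of $\epsilon^2$ (using that $\|\hat f_n - f\|_2^2 = R(\hat f_n) - R(f)$ because $W$ is independent zero-mean). Here $\mathcal{F} = \bigcup_{|A|=s}\mathcal{F}_A$. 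I would then discretize the range $[0,1]$ into $O(1/\epsilon^2)$ levels (this produces the $\lceil 2^{11}/\epsilon^2 \rceil - 1$ factor), so that up to an $O(\epsilon^2)$ approximation each function in $\mathcal{F}_A$ is a monotone labeling into finitely many levels, and apply a Hoeffding bound for each fixed labeling: since $Y, F \in [0,1]$, each term $(Y_i - F(X_i))^2$ lies in $[0,1]$, so a fixed $F$ has $\mathbb{P}(|\widehat R_n(F) - R(F)| > t) \le 2e^{-2nt^2}$. Taking $t$ proportional to $\epsilon^2$, combining with $t = \Theta(\epsilon^2)$ after the level-discretization gives the $-3\epsilon^3 n / (41\cdot 2^{10})$ term (one factor of $\epsilon$ is lost in converting the $\epsilon^2$-scale deviation in loss, divided over $O(1/\epsilon^2)$ levels, into a per-level budget).

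Then I would combine the three ingredients by a union bound: over the $\binom{d}{s}$ sets $A$ (the prefactor), over the at most $\exp\{(\lceil 2^{11}/\epsilon^2\rceil - 1)(2^s + 2\log 2 - 1) n^{(s-1)/s}\}$ distinct discretized monotone labelings within each $\mathcal{F}_A$ on the sample (bounding the expected count via the VC-entropy estimate, and converting the in-expectation bound to a high-probability bound with Markov, which is where a small constant factor such as the $6$ comes from together with the factor of $2$ from Hoeffding and a factor from the max/min interpolation accounting), and finally multiplying by the Hoeffding tail for each labeling. Care is needed because the labelings realized on the sample are data-dependent; the usual fix is the ``ghost sample'' symmetrization argument of Vapnik--Chervonenkis, or alternatively one bounds $\sup_F |\widehat R_n - R|$ directly using that only finitely many distinct labelings can occur and each occurs on a measurable event. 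I expect the main obstacle to be the $s$-dimensional VC-entropy bound: extending Gamarnik's two-dimensional antichain counting to general $s$, and in particular establishing that the expected log-number of coordinate-wise monotone labelings of $n$ i.i.d.\ uniform points in $[0,1]^s$ is $O(2^s n^{(s-1)/s})$ with the stated explicit constants, requires a careful chaining/grid argument (tiling $[0,1]^s$ into $n^{1/s}$ cells per axis and bounding the antichain within each diagonal slab). A secondary subtlety is tracking the interpolation rule: one must verify that the ``min''/``max'' interpolation of the optimal sample values $(X_i, F_i)$ yields a genuinely $s$-sparse coordinate-wise monotone function on all of $[0,1]^d$ whose $L_2$ error is controlled by its error on the sample, which is where the independence of $W$ and the boundedness of $Y$ are used to relate empirical and population risk cleanly.
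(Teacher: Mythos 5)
Your proposal reproduces the paper's overall architecture: a union bound over the $\binom{d}{s}$ candidate active sets, Gamarnik-style VC-entropy control of the class of coordinate-wise monotone functions, a discretization of the range into $m = \lceil 2^{11}/\epsilon^2\rceil$ levels (the $m-1$ in the exponent arising from reducing $m$-ary monotone labelings to binary ones), and a concentration bound giving the $-3\epsilon^3 n/(41\cdot 2^{10})$ term. The paper in fact does not re-derive the concentration step from scratch; it invokes Proposition~2 of \cite{Gamarnik1999} as a black box, which already packages the symmetrization, discretization, and Hoeffding-type argument you sketch, together with the factor $6$ and the $\epsilon^2/2^{10}$ scale. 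The genuine divergence is in the combinatorial heart — the bound on the expected binary labeling number $\mathbb{E}[L(X_1,\dots,X_n)]$. The paper tiles $[0,1]^s$ into an $m^s$ grid with $m = n^{1/s}$, identifies the boundary of a monotone partition with the border cells of a $(s-1)$-dimensional integer partition, upper-bounds $|P([m]^s)|$ via $\binom{2m}{m}^{m^{s-2}}$ (Moshkovitz--Shapira), and computes $\mathbb{E}[2^N]$ with $N$ the binomially-distributed number of points landing in border cells; this cleanly produces the explicit constant $2^s + 2\log 2 - 1$ with no spurious $\log n$. Your proposed Bollob\'as/Dilworth route — controlling the number of labelings by the size of the largest antichain, which is $\Theta(n^{(s-1)/s})$ — is a plausible first instinct but does not directly deliver the stated bound: a chain decomposition into $a \asymp n^{(s-1)/s}$ chains only bounds the number of down-sets by $\prod_i(|C_i|+1) \le (n/a+1)^a = \exp\{a\log(n^{1/s}+1)\}$, which carries an extra $\frac{1}{s}\log n$ factor in the exponent. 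To recover the paper's rate you would still have to pass to the grid/border-cell accounting (which you gesture at in your closing remark), at which point you have essentially reproduced the paper's argument rather than a Dilworth-based one. So: same skeleton, same discretization and union-bound bookkeeping, but the key labeling lemma as you describe it would need to be replaced by the partition/border-cell count to hit the explicit constants in the theorem.
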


\begin{corollary}\label{corollary:noisy-output-statistical-consistency}
When $n = e^{\omega(s^2)}$ and $n = \omega(s \log(d))$, the estimator $\hat{f}_n$ from Algorithm \ref{alg:simultaneous} is consistent. Namely, $\Vert \hat{f}_n - f \Vert_2 \to 0$ in probability as $n \to \infty$. In particular, if the sparsity level is constant, the sample complexity is only logarithmic in the dimension.
\end{corollary}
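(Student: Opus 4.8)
The plan is to derive Corollary~\ref{corollary:noisy-output-statistical-consistency} directly from Theorem~\ref{thm:statistical-consistency} by showing that the right-hand side of the bound tends to $0$ under the stated growth conditions. Fix $\epsilon > 0$; then $\lceil 2^{11}/\epsilon^2 \rceil - 1$ is a constant (call it $c_1 = c_1(\epsilon)$), and the exponent in the bound is
\[
c_1 \left(2^s + 2\log 2 - 1\right) n^{\frac{s-1}{s}} \;-\; \frac{3\epsilon^3}{41 \cdot 2^{10}}\, n \;+\; \log\!\binom{d}{s} \;+\; \log 6.
\]
It suffices to show this quantity $\to -\infty$ as $n \to \infty$. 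Using $\binom{d}{s} \le d^s$, the combinatorial term contributes at most $s \log d$, which by the hypothesis $n = \omega(s \log d)$ is $o(n)$; the additive $\log 6$ is negligible. So the whole expression is $\le c_1 \left(2^s + 2\log 2 - 1\right) n^{(s-1)/s} - \frac{3\epsilon^3}{41 \cdot 2^{10}} n + o(n)$, and it remains to argue the negative linear term dominates the first term.

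The key step is the comparison between $2^s\, n^{(s-1)/s}$ and $n$. Write $n^{(s-1)/s} = n \cdot n^{-1/s}$, so the first term is $c_1\left(2^s + 2\log 2 - 1\right) n\, n^{-1/s}$, and the bracketed exponent becomes
\[
n\left( c_1\left(2^s + 2\log 2 - 1\right) n^{-1/s} - \frac{3\epsilon^3}{41 \cdot 2^{10}} + o(1) \right).
\]
Thus it is enough that $2^s n^{-1/s} \to 0$, i.e. $n^{1/s} / 2^s \to \infty$, i.e. $\tfrac{1}{s}\log n - s\log 2 \to \infty$, i.e. $\log n = \omega(s^2)$ — which is exactly the hypothesis $n = e^{\omega(s^2)}$. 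Under this condition the parenthesized factor converges to $-\frac{3\epsilon^3}{41 \cdot 2^{10}} < 0$ for all large $n$, so the bracket is $\le -\delta n$ eventually for some $\delta > 0$, and hence $\mathbb{P}(\|\hat f_n - f\|_2 \ge \epsilon) \le 6\exp(-\delta n) \to 0$. Since $\epsilon$ was arbitrary, $\hat f_n$ is consistent. For the final sentence of the corollary, note that when $s$ is constant the condition $n = e^{\omega(s^2)}$ is vacuous and only $n = \omega(\log d)$ remains, so $d$ may grow as fast as $e^{o(n)}$, i.e. almost exponentially in $n$.

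I do not expect a genuine obstacle here — the corollary is a routine asymptotic unpacking of the theorem. The only point requiring slight care is bookkeeping the dependence on $\epsilon$: the constant $c_1(\epsilon)$ multiplies the $2^s$ term, so one must confirm that the condition $n = e^{\omega(s^2)}$ (which involves no $\epsilon$) still forces $c_1(\epsilon)\, 2^s n^{-1/s} \to 0$ for each fixed $\epsilon$; this holds because $c_1(\epsilon)$ is a fixed finite constant once $\epsilon$ is fixed, and $2^s n^{-1/s} \to 0$ on its own. One should also make explicit that $2\log 2 - 1 > 0$ is subsumed by $2^s$ (indeed $2^s + 2\log 2 - 1 \le 2^{s+1}$ for $s \ge 1$), so no separate treatment of that additive constant is needed.
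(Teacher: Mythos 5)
Your proof is correct and takes essentially the same approach as the paper's: both bound $\binom{d}{s}\le d^s$ to handle the combinatorial factor, and both reduce the core asymptotic comparison to showing $2^s n^{(s-1)/s} = o(n)$, i.e.\ $2^s n^{-1/s}\to 0$, under $\log n=\omega(s^2)$. Your version is a bit cleaner (factoring $n$ out directly rather than the paper's re-exponentiation chain), but it is the same argument; the only nit is that the final ``i.e.'' between $\frac{1}{s}\log n - s\log 2\to\infty$ and $\log n=\omega(s^2)$ is an implication from the hypothesis, not a genuine equivalence, which is all you need anyway.
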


\subsection{Support recovery}\label{subsec:noisy-output-support-recovery}
In this subsection, we give support recovery guarantees for Algorithm \ref{alg:sequential-recovery}. The guarantees will be in terms of differences of probabilities.
\begin{definition}
Let $Y_1 = f(X_1) + W_1$ and $Y_2 = f(X_2) + W_2$ be two independent samples from the model. For $k \in A$, let 
\[p_{k} \triangleq \mathbb{P} \left( Y_1 > Y_2~|~q(1,2,k) = 1 \right) - \mathbb{P} \left( Y_1 < Y_2~|~q(1,2,k) = 1 \right) .\]
Assume without loss of generality that $A = \{1, 2, \dots, s\}$ and $p_1 \leq p_2 \leq \dots \leq p_s$. 
\end{definition}

\begin{theorem}\label{thm:multiple-coordinates}
Let $B$ be the set of indices corresponding to running Algorithm \ref{alg:sequential-recovery} using $N = s \cdot n$ samples. Then it holds that $B = A$ with probability at least
 \[1 - (d-s)\sum_{k=1}^s (s+1 -k) \exp\left(-\frac{n p_{k}^2}{16} \right). \]
Therefore, Algorithm \ref{alg:sequential-recovery} recovers the true active coordinates with the above probability using $N$ samples. 
\end{theorem}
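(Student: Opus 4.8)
The plan is to bound $\mathbb{P}(B\ne A)$ by a union bound over the $s$ stages of Algorithm~\ref{alg:sequential-recovery} and the $d-s$ inactive coordinates, with each term controlled by a concentration inequality for a $U$-statistic. Recall that Algorithm~\ref{alg:sequential-recovery} runs in $s$ stages, the $t$-th consuming a fresh batch of $n$ of the $N=sn$ samples; on batch $t$ it forms, for each coordinate $k$ not yet selected, the rank statistic
\[
\hat p_k^{(t)}\;=\;\binom{n}{2}^{-1}\sum_{i<j}\operatorname{sgn}\!\big(X_{i,k}-X_{j,k}\big)\operatorname{sgn}\!\big(Y_i-Y_j\big),
\]
which is the quantity on which the stage-$t$ linear program is optimized, and appends $\arg\max_k\hat p_k^{(t)}$ to the current set $B_{t-1}$. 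Since $X$ has a continuous distribution, for each pair $\{i,j\}$ exactly one of $X_{i,k}>X_{j,k}$, $X_{i,k}<X_{j,k}$ holds almost surely, so the conditioning in the definition of $p_k$ is a clean re-weighting: $\hat p_k^{(t)}$ is an order-$2$ $U$-statistic with kernel in $[-1,1]$ and $\mathbb{E}\,\hat p_k^{(t)}=p_k$, and in particular it is not a ratio of $U$-statistics, which removes the main nuisance.

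Two population facts feed the argument. First, the definition of $s$-sparse coordinate-wise monotonicity forces $f$ to depend only on the coordinates in $A$; hence for $k'\notin A$ the coordinate $X_{\cdot,k'}$ is independent of $(f(X_\cdot),W_\cdot)$, and together with exchangeability of the two samples this gives $\mathbb{P}(Y_1>Y_2\mid q(1,2,k')=1)=\mathbb{P}(Y_1<Y_2\mid q(1,2,k')=1)$, so the population value of $\hat p_{k'}^{(t)}$ is $0$. Second, for $k\in A$ one has $p_k\ge 0$: conditioning on $X_{1,k}=a>b=X_{2,k}$ and writing $Y_1-Y_2=f(a,U)-f(b,U')+W-W'$ with $U,U'$ i.i.d.\ and $W,W'$ i.i.d., coordinate-wise monotonicity gives $f(a,u)\ge f(b,u)$ pointwise, and a coupling (compare this quantity with its sign-flip obtained by relabelling $U\leftrightarrow U'$, $W\leftrightarrow W'$) shows it stochastically dominates a variable distributed as its own negative, whence $\mathbb{P}(Y_1>Y_2\mid X_{1,k}=a,X_{2,k}=b)\ge\mathbb{P}(Y_1<Y_2\mid\cdot)$; averaging over $a>b$ yields $p_k\ge0$.

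The concentration step is the technical core. Fix a stage $t$ and condition on the outcome of stages $1,\dots,t-1$, i.e.\ on $B_{t-1}$; because batch $t$ is independent of the earlier batches, $\{\hat p_k^{(t)}\}_k$ keeps its unconditional law. For $k'\notin A$ and $k\in A$, the difference $\hat p_{k'}^{(t)}-\hat p_k^{(t)}$ is again an order-$2$ $U$-statistic, with kernel in $[-2,2]$ and mean $p_{k'}-p_k=-p_k\le 0$, so Hoeffding's inequality for $U$-statistics should give
\[
\mathbb{P}\!\big(\hat p_{k'}^{(t)}\ge\hat p_k^{(t)}\mid B_{t-1}\big)\;\le\;\exp\!\Big(-\tfrac{n p_k^{2}}{16}\Big).
\]
Pinning down exactly the constant $1/16$ — keeping track of $\lfloor n/2\rfloor$ versus $n$ in the $U$-statistic Hoeffding bound and of the kernel range — is the one place I expect to need genuine care.

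Finally, the combinatorial union bound. Let $t^\star$ be the first stage at which an inactive coordinate is selected and $k^\star$ that coordinate, so $\{B\ne A\}=\bigsqcup_{k'\notin A}\bigsqcup_{t}\{k^\star=k',\,t^\star=t\}$. On $\{t^\star=t\}$ all of $B_{t-1}$ lies in $A$, so the lowest-ranked still-unselected active coordinate, with index $m(B_{t-1})$, is well defined and $\hat p_{k'}^{(t)}\ge\hat p_{m(B_{t-1})}^{(t)}$; conditioning on $B_{t-1}$ and applying the display, then summing over the $d-s$ choices of $k'$, over $t\le s$, and over the value $m(B_{t-1})=k$, one gets
\[
\mathbb{P}(B\ne A)\;\le\;(d-s)\sum_{k=1}^{s}\exp\!\Big(-\tfrac{np_k^2}{16}\Big)\,\mathbb{E}\big[\#\{t\le s:\ m(B_{t-1})=k,\ B_{t-1}\subseteq A\}\big].
\]
The bracketed count is at most $s+1-k$ deterministically, since $m(B_{t-1})=k$ forces $\{1,\dots,k-1\}\subseteq B_{t-1}$, hence $t-1=|B_{t-1}|\ge k-1$, leaving only stages $k,k+1,\dots,s$. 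Substituting yields $\mathbb{P}(B=A)\ge 1-(d-s)\sum_{k=1}^{s}(s+1-k)\exp(-np_k^2/16)$, as claimed. The concentration step (correct constant, $U$-statistic machinery) is the main obstacle; the combinatorial weighting is then routine bookkeeping.
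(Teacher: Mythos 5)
Your argument is essentially correct and delivers the stated bound, but it routes through a genuinely different formalism than the paper, so a comparison is worth recording. The paper never passes to a U-statistic: it works directly with the LP objective $z(\bar v)$, computes $z(\bar v)-z(v^\star)$ for a generic competitor $\bar v$ with $\bar v_j$ maximal, lower-bounds its mean by $\tfrac{p_1}{2}n(n-1)$, and applies McDiarmid to the $2n$ variables $(X_i,W_i)$, yielding $\exp(-np_1^2/16)$ exactly. Your reformulation of the stage-$t$ LP as ``pick $\arg\max_k\hat p_k^{(t)}$'' is valid and arguably cleaner, but it rests on an observation you assert without justification: once the corrections $c^{ij}_k$ are optimized out, $\sum_k q(i,j,k)\bar v_k\le\sum_k\bar v_k=1$ on the simplex, so the $\max\{0,\cdot\}$ is redundant, the objective becomes linear in $\bar v$, and the LP has a vertex optimum; only then does the argmax description (and the identification, up to an affine reparametrization independent of $k$, of the relevant score with the Kendall-type statistic $\hat p_k$) become exact. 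You should make this step explicit, since it is the bridge between the LP and the U-statistic. Your combinatorial decomposition (first erroneous stage $t^\star$, deterministic bound $\#\{t:m(B_{t-1})=k,\,B_{t-1}\subseteq A\}\le s{+}1{-}k$) is a valid alternative to the paper's sequential conditioning argument and produces the same weights.

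There is one concrete loose end you already flagged, and it is real: Hoeffding's inequality for an order-$2$ U-statistic with kernel in $[-2,2]$ gives $\exp(-\lfloor n/2\rfloor p_k^2/8)$, which equals $\exp(-np_k^2/16)$ only when $n$ is even; for odd $n$ it is $\exp(-(n-1)p_k^2/16)$, strictly weaker than the theorem's claim. The paper's McDiarmid route avoids this because it bounds the unnormalized double sum directly and never passes through $\lfloor n/2\rfloor$. To match the stated constant for all $n$ you would either need to apply McDiarmid to the U-statistic (which in fact gives the worse $\exp(-np_k^2/32)$ because it throws away the pairwise structure) or adopt the paper's direct treatment. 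So while your approach is conceptually sound and the structure of the union bound is correct, as written it proves the theorem only up to a harmless but nonzero slack in the exponent for odd $n$.
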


\begin{corollary}\label{corollary:recovery}
Assume that $p_1 = \Theta(1)$. Let $N = s \cdot n$ be the number of samples used by Algorithm \ref{alg:sequential-recovery}. If $N = s \cdot \omega( \log(d))$, then Algorithm \ref{alg:sequential-recovery} recovers the true support w.h.p. as $s, d \to \infty$. 
\end{corollary}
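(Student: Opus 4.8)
The plan is to take the failure bound from Theorem~\ref{thm:multiple-coordinates} and show it tends to $0$ under the stated scaling. Writing $N = s\cdot n$, the hypothesis $N = s\cdot\omega(\log d)$ is exactly $n = \omega(\log d)$, so it suffices to control
\[
\mathbb{P}(B \ne A) \;\le\; (d-s)\sum_{k=1}^{s}(s+1-k)\exp\!\left(-\frac{n p_k^2}{16}\right).
\]
Since the $p_k$ are sorted increasingly, $p_k \ge p_1$ for every $k$, hence $\exp(-n p_k^2/16) \le \exp(-n p_1^2/16)$, and the combinatorial factor collapses to $\sum_{k=1}^s (s+1-k) = \sum_{j=1}^s j = s(s+1)/2$.

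Next I would use the crude bounds $d-s \le d$ and, since $A\subseteq[d]$ forces $s\le d$, $s(s+1)/2 \le d^2$, giving
\[
\mathbb{P}(B \ne A) \;\le\; (d-s)\,\frac{s(s+1)}{2}\,\exp\!\left(-\frac{n p_1^2}{16}\right) \;\le\; d^3 \exp\!\left(-\frac{n p_1^2}{16}\right).
\]
Taking logarithms, $\log \mathbb{P}(B \ne A) \le 3\log d - \tfrac{p_1^2}{16}\,n$. The assumption $p_1 = \Theta(1)$ supplies a constant $c>0$ with $p_1^2/16 \ge c$, and $n = \omega(\log d)$ means $n/\log d \to \infty$; therefore $cn - 3\log d = \log d\,(cn/\log d - 3) \to +\infty$, so the right-hand side tends to $-\infty$ and $\mathbb{P}(B\ne A)\to 0$ as $s,d\to\infty$. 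Since Theorem~\ref{thm:multiple-coordinates} guarantees Algorithm~\ref{alg:sequential-recovery} outputs $B$ when run on $N$ samples, this is precisely the claimed high-probability support recovery.

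There is essentially no obstacle here; the only points requiring care are (i) reducing all $s$ exponents to the worst-case gap $p_1$ via monotonicity of the $p_k$ before summing, and (ii) observing that the prefactor is only polynomial in $d$ (degree at most $3$ after bounding $s\le d$), so it is absorbed by the exponential as soon as the sample size is super-logarithmic. This is also why the statement asks for $n = \omega(\log d)$ rather than merely $n = \Omega(\log d)$: a fixed constant multiple $n = c'\log d$ would only beat $d^{c c'/\log\text{-base}}$ for a range of exponents depending on $c'$ and on the unknown constant $p_1^2/16$, which is not enough to kill a $d^3$ prefactor uniformly.
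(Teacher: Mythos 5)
Your argument is essentially the paper's own proof: the paper likewise bounds each $p_k$ by $p_1$, absorbs $(d-s)\sum_{k=1}^s(s+1-k)$ into a $d^3$ prefactor, and observes that $n = \omega(\log d)$ drives $3\log d - np_1^2/16 \to -\infty$. The only addition you make is spelling out the intermediate bound $s(s+1)/2 \le d^2$, which the paper leaves implicit; otherwise the reasoning is the same.
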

Note that if $s$ is itself constant, then $p_1 = \Theta(1)$.

We can now give a guarantee of the success of Algorithm \ref{alg:two-stage}, using Algorithm \ref{alg:sequential-recovery} for support recovery.
\begin{corollary}\label{corollary:two-stage-noisy-output}
Assume that $p_1 = \Theta(1)$. Consider running Algorithm \ref{alg:two-stage} using $s \cdot n$ samples for sequential recovery and an additional $n$ samples for function value estimation. Let $N = (s+1) n$ be the total sample size, and let $\hat{f}_N$ be the estimated function. If $N = s \cdot \omega(\log(d))$ and $N = s e^{\omega(s^2)}$, then $\hat{f}_N$ is a consistent estimator. 
\end{corollary}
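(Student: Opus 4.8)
The plan is to split the $N = (s+1)n$ samples into the $s\cdot n$ samples used by Algorithm~\ref{alg:sequential-recovery} for support recovery and the $n$ fresh, independent samples used for function-value estimation, and then to decompose the error event according to whether support recovery succeeds. Writing $B$ for the index set returned by Algorithm~\ref{alg:sequential-recovery}, a union bound gives
\[
\mathbb{P}\big( \Vert \hat{f}_N - f \Vert_2 \ge \epsilon \big) \;\le\; \mathbb{P}(B \ne A) \;+\; \mathbb{P}\big( \Vert \hat{f}_N - f \Vert_2 \ge \epsilon \,\mid\, B = A \big),
\]
so it suffices to show both terms tend to $0$ as $n \to \infty$ (equivalently $N \to \infty$, since $n = N/(s+1) = \Theta(N/s)$).

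\emph{The recovery term.} Applying Theorem~\ref{thm:multiple-coordinates} with its sample count set to $s\cdot n$ and using $p_1 \le p_k$ for every $k \in A$,
\[
\mathbb{P}(B \ne A) \;\le\; (d-s)\sum_{k=1}^s (s+1-k)\exp\!\Big( -\tfrac{n p_k^2}{16} \Big) \;\le\; d\, s^2 \exp\!\Big( -\tfrac{n p_1^2}{16} \Big).
\]
Since $p_1 = \Theta(1)$ and the hypothesis $N = s\cdot\omega(\log d)$ forces $n = \omega(\log d)$, we have $\exp(-n p_1^2/16) = d^{-\omega(1)}$, and since $s \le d$ this gives $d\, s^2 \exp(-n p_1^2/16) \le d^3 \cdot d^{-\omega(1)} \to 0$. (This step is exactly Corollary~\ref{corollary:recovery}.)

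\emph{The estimation term.} Condition on the event $\{B = A\}$. Because the $n$ estimation samples are drawn independently of the $s\cdot n$ recovery samples, on this event Algorithm~\ref{alg:two-stage} solves precisely the program \eqref{eq:optimization-first}--\eqref{eq:optimization-last} with the active set held fixed at the true set $A$, i.e.\ with no outer minimization over $A$. The proof of Theorem~\ref{thm:statistical-consistency} then applies with only notational changes, except that the union bound over the $\binom{d}{s}$ candidate active sets is no longer required, so the factor $\binom{d}{s}$ is replaced by $1$:
\[
\mathbb{P}\big( \Vert \hat{f}_N - f \Vert_2 \ge \epsilon \,\mid\, B = A \big) \;\le\; 6\exp\!\left\{ \Big( \Big\lceil \tfrac{2^{11}}{\epsilon^2} \Big\rceil - 1 \Big) \big( 2^s + 2\log 2 - 1 \big) n^{\frac{s-1}{s}} - \tfrac{3\epsilon^3 n}{41 \times 2^{10}} \right\}.
\]
For fixed $\epsilon > 0$ the negative term is of order $\epsilon^3 n$ and the positive term is of order $\epsilon^{-2} 2^s n^{1-1/s}$, so the bracketed exponent diverges to $-\infty$ once $\epsilon^3 n \gg \epsilon^{-2} 2^s n^{1-1/s}$, equivalently $n^{1/s} \gg \epsilon^{-5} 2^s$, equivalently $\tfrac1s \log n \gg s\log 2 - 5\log\epsilon$. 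For fixed $\epsilon$ the right-hand side is $\Theta(s)$, so this holds precisely when $\log n = \omega(s^2)$, that is $n = e^{\omega(s^2)}$; and the hypothesis $N = s\, e^{\omega(s^2)}$ yields exactly $n = N/(s+1) = e^{\omega(s^2)}$.

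\emph{Main obstacle.} The only parts that go beyond bookkeeping are: (i) checking that conditioning on the data-dependent success event $\{B = A\}$ leaves the distribution of the estimation samples intact --- this is precisely why the sample split is needed; and (ii) verifying that the proof of Theorem~\ref{thm:statistical-consistency} specializes to the known-support case with the $\binom{d}{s}$ factor deleted. Granting these, the remaining work is the union bound above together with the two asymptotic comparisons, which reveal that the hypotheses $N = s\cdot\omega(\log d)$ and $N = s\, e^{\omega(s^2)}$ are exactly what the recovery stage and the estimation stage respectively demand.
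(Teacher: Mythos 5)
Your proposal is correct and follows essentially the same route as the paper: split the samples, bound $\mathbb{P}(B\neq A)$ via Theorem~\ref{thm:multiple-coordinates} (i.e.\ Corollary~\ref{corollary:recovery}), bound the conditional estimation error via Theorem~\ref{thm:statistical-consistency} with the $\binom{d}{s}$ factor reduced to $1$ (the paper phrases this as ``with $d$ set to $s$'', which is the same thing since $\binom{s}{s}=1$), and then invoke the two asymptotic conditions. The additional care you take in explaining why the independent sample split makes the conditioning on $\{B=A\}$ harmless is implicit in the paper's statement but worth making explicit.
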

Corollary \ref{corollary:two-stage-noisy-output} shows that if $s$ is constant, then Algorithm \ref{alg:two-stage} produces a consistent estimator with $N = \omega( \log(d))$ samples.

\section{Results on the Noisy Input Model}\label{sec:noisy-input}
Recall the Noisy Input Model: $Y = f(X + W)$, where $f$ is an $s$-sparse coordinate-wise monotone function with active coordinates $A$. We assume throughout this section that $X$ is a uniform random variable on $[0,1]^d$, $W$ is a zero-mean random variable independent from $X$, and $f: \mathbb{R}^d \to \{0,1\}$. 

In this section, we prove the statistical consistency of Two-Stage Isotonic Regression, with Sequential Linear Programming Support Recovery as the support recovery algorithm. In Subsection \ref{subsec:noisy-input-consistency} we consider the setting where the set of active coordinates is known, and provide an upper bound on the resulting $L_2$-norm error of our estimator. In Subsection \ref{subsection:noisy-input-support-recovery} we provide a guarantee on the probability of correctly estimating the support, using S-LPSR. These results are combined to give Corollary \ref{corollary:two-stage-noisy-input}, stated at the end of the section. As a special case of the corollary, if $s$ is constant and $N = \omega(\log(d))$ samples are used by TSIR, then the estimator $\hat{f}_n$ that is produced is consistent.

\subsection{Statistical consistency}\label{subsec:noisy-input-consistency}
Suppose that the set of active coordinates, $A$, is known. Then we can apply Problem \eqref{eq:two-stage-first}-\eqref{eq:two-stage-last} within Algorithm \ref{alg:two-stage} to estimate the function values, with the variables $v_i$ that indicate the active coordinates set to $1$ if $i \in A$, and set to $0$ otherwise. The coordinates outside the active set do not influence the solution of the optimization problem, and therefore do not affect the estimated function. Therefore, the setting where $A$ is known is equivalent to the non-sparse setting with dimension $d = s$.

We investigate the regime under which Problem \eqref{eq:two-stage-first}-\eqref{eq:two-stage-last} produces a consistent estimator, in the non-sparse setting ($d=s$). To state our guarantees, it is convenient to represent binary coordinate-wise monotone functions in terms of \emph{monotone partitions}. 
\begin{definition}[Monotone Partition]
We say that $(S_0, S_1)$ is a \emph{monotone partition} of $\mathbb{R}^d$ if
\begin{enumerate}
\item $S_0$ and $S_1$ form a partition of $\mathbb{R}^d$. That is, $S_0 \cup S_1 = \mathbb{R}^d$ and $S_0 \cap S_1 = \emptyset$.
\item For all $x, y \in \mathbb{R}^d$, if $x \preceq y$, then either (i) $x,y \in S_0$, (ii) $x, y \in S_1$, or (iii) $x \in S_0, y \in S_1$.
\end{enumerate}
Let $\mathcal{M}_d$ be the set of all monotone partitions of $\mathbb{R}^d$.
\end{definition}
Note that there is a one-to-one correspondence between monotone partitions and binary coordinate-wise monotone functions.

Let $Y = f(X+W)$ represent our model, with $d = s$, and with $f$ corresponding to a monotone partition $(S_0^{\star}, S_1^{\star})$. That is, $f(x) = 0$ for $x \in S_0^{\star}$ and $f(x) = 1$ for $x \in S_1^{\star}$.  Let $h_0(x)$ be the probability density function of $X$, conditional on $Y = 0$. Similarly, let $h_1(x)$ be the probability density function of $X$, conditional on $Y = 1$. For $(S_0, S_1) \in \mathcal{M}_d$, let 
\begin{align*}
H_0(S_1) = \int_{z \in S_1} h_0(z) dz ~\text{ and } ~H_1(S_0) = \int_{z \in S_0} h_1(z) dz.
\end{align*}
Finally, let $p$ be the probability that $Y = 0$. Let 
\[q(S_0, S_1) \triangleq p H_0(S_1) + (1-p) H_1(S_0).\]
The value of $q(S_0, S_1)$ is the probability of misclassification, under the monotone partition $(S_0, S_1)$. 

\begin{assumption}\label{assumption:unique-minimizer}
We assume that $q$ has a unique minimizer on $\mathcal{M}_d$, which is $(S_0^{\star}, S_1^{\star})$. 
\end{assumption}

\begin{definition}[Discrepancy]
For two monotone partitions $(S_0, S_1)$ and $(S_0', S_1')$, the discrepancy function $D: \mathcal{M}_d \times \mathcal{M}_d \to [0,1]$ is defined as follows.
\[D\left((S_0, S_1), (S_0', S_1')\right) \triangleq \mathbb{P}\left(X \in S_0 \cap S_1' \right) + \mathbb{P}\left(X \in S_0' \cap S_1\right)  \]
Also let
\[B_{\delta}\left(S_0^{\star}, S_1^{\star}\right) \triangleq \{(S_0, S_1) \in \mathcal{M}_d :  D\left((S_0, S_1), (S_0^{\star}, S_1^{\star})\right)\leq \delta \}\] 
be the set of monotone partitions with discrepancy at most $\delta$ from $(S_0^{\star}, S_1^{\star})$.
\end{definition}



\begin{theorem}\label{thm:consistency-binary}
Let $d = s$. Suppose Assumption \ref{assumption:unique-minimizer} holds, and the components of $W$ are independent. Let $\hat{f}_n$ be the estimator derived from Algorithm \ref{alg:sequential-recovery}, and let 
\[q_{\text{min}}(\delta) \triangleq \min \left\{q(S_0, S_1): (S_0, S_1) \not \in B_{\delta}(S_0^{\star}, S_1^{\star})\right\} > q(S_0^{\star}, S_1^{\star}).\] 
Then for any $0 < \delta \leq 1$, 
\begin{align*}
&\mathbb{P} \left( \Vert \hat{f} - f \Vert_2 >\delta \right) \leq\\
&\frac{ \exp \left[ \left(2^s + 2\log(2) -1 \right) n^{\frac{s-1}{s}}\right] }{\exp \left[n^{\frac{2s-1}{2s} }\right]} +\left(\exp \left[n^{\frac{2s-1}{2s}}\right] + 1\right) \exp\left(- \frac{ \left(q_{\text{min}}\left(\delta\right) - q\left(S_0^{\star}, S_1^{\star} \right) \right)^2 n}{36} \right).
\end{align*}
\end{theorem}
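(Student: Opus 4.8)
The plan is to run an empirical-risk-minimization (ERM) argument over the class $\mathcal{M}_s$ of monotone partitions, controlling its complexity with the expected VC-entropy bound established earlier (the $s$-dimensional generalization of \cite{Gamarnik1999}, which supplies $\mathbb{E}[\mathcal{N}]\le\exp[(2^s+2\log 2-1)n^{(s-1)/s}]$ for the number $\mathcal{N}$ of monotone-partition dichotomies of the sample), and trading this against a Hoeffding-type deviation inequality through the threshold $T\triangleq\exp[n^{(2s-1)/(2s)}]$, whose logarithm is deliberately placed strictly between the entropy exponent $(2^s+2\log 2-1)n^{(s-1)/s}$ and the linear scale $n$.

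First I would reduce to ERM. As explained just before the theorem, once the active set is known the function-value optimization is the non-sparse ($d=s$) problem, i.e.\ minimization of the empirical misclassification rate $\hat R_n(g)\triangleq\frac1n\sum_{i=1}^n\mathbbm{1}[g(X_i)\ne Y_i]$ over $g\in\mathcal{M}_s$, whose population counterpart is $R(g)=q(S_0,S_1)$. Because $f$ and every $g$ are $\{0,1\}$-valued and $X$ is uniform, $\|g-f\|_2^2=\mathbb{P}_X(g(X)\ne f(X))$ is exactly the discrepancy $D((S_0,S_1),(S_0^{\star},S_1^{\star}))$, so on the event $\{\|\hat f_n-f\|_2>\delta\}$ the partition $\hat f_n$ lies outside $B_\delta(S_0^{\star},S_1^{\star})$ and hence $R(\hat f_n)\ge q_{\text{min}}(\delta)$, while $\hat R_n(\hat f_n)\le\hat R_n(f)$ by optimality. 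With $\gamma\triangleq q_{\text{min}}(\delta)-q(S_0^{\star},S_1^{\star})>0$ (positive by Assumption \ref{assumption:unique-minimizer}), the bad event thus implies the existence of $g\in\mathcal{M}_s$ with $R(g)\ge R(f)+\gamma$ yet $\hat R_n(g)\le\hat R_n(f)$.

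Next I would condition on $X_1,\dots,X_n$. Since $\hat R_n(g)$ and each label $g(X_i)$ depend on $g$ only through the dichotomy it induces on the sample, fixing the interpolation-rule representative of each realizable dichotomy leaves at most $\mathcal{N}$ candidate functions, whose population risks and distances to $f$ are now deterministic given the sample. Splitting on $\{\mathcal{N}>T\}$ versus $\{\mathcal{N}\le T\}$: Markov plus the expected VC-entropy bound give $\mathbb{P}(\mathcal{N}>T)\le\exp[(2^s+2\log 2-1)n^{(s-1)/s}]/T$, which is exactly the first term of the claimed inequality. On $\{\mathcal{N}\le T\}$ I would union-bound over the at most $T$ candidates whose partition lies outside $B_\delta(S_0^{\star},S_1^{\star})$: for each such $g$ the increments $\mathbbm{1}[g(X_i)\ne Y_i]-\mathbbm{1}[f(X_i)\ne Y_i]$ are i.i.d.\ in $\{-1,0,1\}$ with conditional mean $R(g)-R(f)\ge\gamma$, so writing $\{\hat R_n(g)\le\hat R_n(f)\}\subseteq\{\hat R_n(g)\le R(g)-\gamma/2\}\cup\{\hat R_n(f)\ge R(f)+\gamma/2\}$ and applying Hoeffding to each piece produces a per-candidate bound of order $\exp(-c\gamma^2 n)$; summing over the $\le T$ such candidates together with the single $\hat R_n(f)$-deviation term gives $(T+1)\exp(-\gamma^2 n/36)$, the constant $36$ being a comfortable allowance for the $\pm1$ range of the increments and the two-sided split. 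Taking expectations over the sample and adding the two contributions yields the stated bound.

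The main obstacle, as usual for ERM over an uncountable class, is turning the \emph{expected} entropy bound into a deviation statement for the data-dependent estimator $\hat f_n$; the conditioning-on-the-sample / canonical-representative device together with the auxiliary threshold $T$ is precisely what makes this rigorous, and the exponent $\log T=n^{(2s-1)/(2s)}$ is dictated by the requirement that both the entropy-to-$T$ ratio and the product $T\cdot e^{-\gamma^2 n}$ vanish as $n\to\infty$. A secondary point needing care is to invoke the independence of the components of $W$ exactly where the earlier Noisy-Input-Model lemmas (the VC-entropy estimate and the interpolation-rule analysis) require it, and to track carefully the relation between $\delta$, $\|\cdot\|_2^2$, and the discrepancy so that the event $\{\|\hat f_n-f\|_2>\delta\}$ is correctly matched with $q_{\text{min}}(\delta)$.
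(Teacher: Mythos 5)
Your proposal is correct and follows essentially the same route as the paper: reduce to empirical risk minimization of the misclassification count over monotone partitions, observe that $\|\hat f_n-f\|_2^2$ equals the discrepancy, concentrate the per-partition empirical risk (the paper uses McDiarmid on the count $g(X_{1:n},W_{1:n};(S_0,S_1))$ with the $\gamma/3$ split to get the constant $36$, which is equivalent to your Hoeffding argument), bound the number of realizable dichotomies by the labeling number, split on $\{\mathcal{N}>T\}$ with $T=\exp[n^{(2s-1)/(2s)}]$, apply Markov together with the expected VC-entropy bound $\mathbb{E}[L]\le\exp[(2^s+2\log 2-1)n^{(s-1)/s}]$ for the first term, and union bound over the at most $T$ surviving candidates for the second. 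The $\delta$ versus $\delta^2$ mismatch between $\|\hat f-f\|_2$ and the discrepancy that you flag at the end is indeed present in the paper's own proof as well, which passes through $\mathbb{P}(\|\hat f_n-f\|_2^2>\delta)$; it is a minor inconsistency of the paper and not a defect specific to your argument.
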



\begin{corollary}\label{corollary:noisy-input}
Suppose that $q_{\text{min}}\left(\delta\right) - q\left(S_0^{\star}, S_1^{\star} \right) = \Theta(1)$, that is, constant in $s$.
When $d = s$ and $ n =  e^{\omega(s^2)}$, the estimator $\hat{f}_n$ produced by Algorithm \ref{alg:simultaneous} is consistent. 
\end{corollary}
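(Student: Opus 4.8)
The plan is to obtain Corollary~\ref{corollary:noisy-input} as a direct asymptotic consequence of Theorem~\ref{thm:consistency-binary}. Fix an arbitrary accuracy $\epsilon > 0$; by the definition of a consistent estimator it suffices to show $\mathbb{P}(\Vert \hat f_n - f\Vert_2 \geq \epsilon) \to 0$. We apply Theorem~\ref{thm:consistency-binary} with $\delta = \epsilon$ (noting that when $d = s$ the cardinality constraint $|A| = s$ is vacuous, so the output of Algorithm~\ref{alg:simultaneous} is exactly the estimator the theorem bounds), and abbreviate $c \triangleq \big(q_{\text{min}}(\epsilon) - q(S_0^{\star}, S_1^{\star})\big)^2 / 36$, which by hypothesis is $\Theta(1)$ and in particular bounded away from $0$. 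It then remains to show that the two terms in the theorem's bound both vanish.

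For the first term $\exp\!\big[(2^s + 2\log 2 - 1)\, n^{(s-1)/s}\big]\big/\exp\!\big[n^{(2s-1)/(2s)}\big]$, I would factor $n^{(s-1)/s}$ out of the exponent and use $\frac{2s-1}{2s} - \frac{s-1}{s} = \frac{1}{2s}$ to rewrite it as $\exp\!\big(n^{(s-1)/s}\big[(2^s + 2\log 2 - 1) - n^{1/(2s)}\big]\big)$. Since $n = e^{\omega(s^2)}$ means $\log n = \omega(s^2)$, we have $\log\!\big(n^{1/(2s)}\big) = (\log n)/(2s) = \omega(s)$, which eventually exceeds $\log\!\big(2(2^s + 2\log 2 - 1)\big) = O(s)$; hence for all large $n$ the bracket is at most $-\tfrac12 n^{1/(2s)}$ and the whole exponent is at most $-\tfrac12 n^{(2s-1)/(2s)} \to -\infty$, so the first term tends to $0$.

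For the second term $\big(\exp\!\big[n^{(2s-1)/(2s)}\big] + 1\big)\exp(-cn)$, the dominant summand has exponent $n^{(2s-1)/(2s)} - cn = n\big(n^{-1/(2s)} - c\big)$; the same estimate $(\log n)/(2s) \to \infty$ gives $n^{-1/(2s)} \to 0$, so eventually $n^{-1/(2s)} < c/2$ and this summand is at most $e^{-cn/2} \to 0$, while the trailing $\exp(-cn) \to 0$ trivially. Combining the two bounds yields $\mathbb{P}(\Vert \hat f_n - f\Vert_2 \geq \epsilon) \to 0$, and since $\epsilon$ was arbitrary this establishes consistency.

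The main obstacle --- really the only point requiring care --- is the competition of exponents in the first term as $n$ and $s$ diverge simultaneously: the $2^s$ prefactor multiplying $n^{(s-1)/s}$ must be swallowed by the gap between the exponents $(2s-1)/(2s)$ and $(s-1)/s$, which is only $1/(2s)$ and therefore shrinks with $s$. Tracking this shows the bound is useful precisely when $n^{1/(2s)} \gg 2^s$, i.e.\ $\log n = \omega(s^2)$, which is exactly the scaling $n = e^{\omega(s^2)}$ assumed; anything slower would fail, so the hypothesis is tight for this argument.
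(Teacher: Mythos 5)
Your proof is correct and takes essentially the same route as the paper: invoke Theorem~\ref{thm:consistency-binary}, then show the two exponential terms vanish under $\log n = \omega(s^2)$. Your handling of the first term (factoring out $n^{(s-1)/s}$ and comparing $2^s$ directly against $n^{1/(2s)}$) is in fact a bit cleaner than the paper's chain of little-$o$ manipulations through $n^{s\log_n 2}$, but the underlying asymptotic bookkeeping is the same.
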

Theorem \ref{thm:consistency-binary} has an analogous version in the sparse setting ($s < d$), which we give in the supplementary material (Theorem \ref{thm:noisy-input-sparse}). The result allows us to state the following corollary regarding the IPIR algorithm.
\begin{corollary}\label{corollary:noisy-input-simultaneous}
Suppose $s$ is constant and the components of $W$ are independent. Let $\hat{f}_n$ be the estimator produced by Algorithm \ref{alg:simultaneous}. If $n = \omega(\log(d))$, then $\hat{f}_n$ is a consistent estimator.
\end{corollary}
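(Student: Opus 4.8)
The plan is to deduce the corollary from the sparse analogue of Theorem~\ref{thm:consistency-binary}, namely Theorem~\ref{thm:noisy-input-sparse}, which upgrades the $d=s$ argument to an unknown active set, and then to specialize the resulting bound to $s=O(1)$, $n=\omega(\log d)$. To obtain Theorem~\ref{thm:noisy-input-sparse} I would observe that the class of $s$-sparse binary coordinate-wise monotone functions on $[0,1]^d$ is the union, over the $\binom{d}{s}$ candidate active sets $A'$, of the class of ordinary binary monotone partitions of $[0,1]^{A'}$; hence its expected VC entropy on $n$ samples is at most $\log\binom{d}{s}+\big(2^{s}+2\log 2-1\big)n^{(s-1)/s}$, the second summand being exactly the $d=s$ estimate already used for Theorem~\ref{thm:consistency-binary} and Theorem~\ref{thm:statistical-consistency}. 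I would then run the same sieve-plus-Hoeffding scheme as for Theorem~\ref{thm:consistency-binary}: restrict to a finite sieve of $M$ representative monotone partitions, use Assumption~\ref{assumption:unique-minimizer} to argue that any partition on any candidate active set which is $\delta$-far in discrepancy from $(S_0^{\star},S_1^{\star})$ has misclassification probability at least $q_{\text{min}}(\delta)>q(S_0^{\star},S_1^{\star})$, and apply a Hoeffding bound together with a union bound over the $\binom{d}{s}$ active sets and the $M$ sieve elements. Using the independence of the components of $W$ exactly as in the $d=s$ case, this yields a bound of the shape
\[
\mathbb{P}\big(\Vert\hat{f}_n-f\Vert_2>\delta\big)\ \le\ \binom{d}{s}\,\frac{\exp\!\big[(2^{s}+2\log 2-1)\,n^{(s-1)/s}\big]}{M}\ +\ (M+1)\,\exp\!\Big(-\tfrac{(q_{\text{min}}(\delta)-q(S_0^{\star},S_1^{\star}))^{2}\,n}{36}\Big),
\]
valid for any sieve size $M$; the rigid choice $M=\exp[n^{(2s-1)/(2s)}]$ recovers the stated form of Theorem~\ref{thm:noisy-input-sparse} (and of Theorem~\ref{thm:consistency-binary} when $d=s$), but keeping $M$ tunable is what makes the $\omega(\log d)$ regime accessible.

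Now specialize. When $s$ is constant, $2^{s}$ is a constant and, because the model restricted to the $s$ active coordinates is fixed, $\Delta:=q_{\text{min}}(\delta)-q(S_0^{\star},S_1^{\star})$ is a fixed positive constant; moreover $n^{(s-1)/s}=o(n)$, and $n=\omega(\log d)$ gives $\log\binom{d}{s}\le s\log d=o(n)$. Hence the complexity quantity $s\log d+(2^{s}+2\log 2-1)n^{(s-1)/s}$ is $o(n)$ while $\Delta^{2}n/36=\Theta(n)$, so one may pick $M$ with
\[
s\log d+(2^{s}+2\log 2-1)\,n^{(s-1)/s}\ \ll\ \log M\ \ll\ \tfrac{\Delta^{2}n}{36},
\]
which is possible precisely because the left-hand quantity is $o(n)$. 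With such an $M$, the first term above equals $\exp[\,s\log d+O(n^{(s-1)/s})-\log M\,]\to 0$ and the second equals $\exp[\,\log M-\Theta(n)\,]\to 0$, so $\mathbb{P}(\Vert\hat{f}_n-f\Vert_2>\delta)\to 0$ for every $\delta\in(0,1]$, i.e., $\hat{f}_n$ is consistent. (If one insists on the rigid $M$, the same conclusion needs the stronger $\log d=o(n^{(2s-1)/(2s)})$, which for fixed $s$ is anyway covered by the standing hypotheses of Theorem~\ref{thm:noisy-input-sparse}.)

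The main obstacle is the first step, and within it the transfer of Assumption~\ref{assumption:unique-minimizer}: because Algorithm~\ref{alg:simultaneous} optimizes jointly over the active set and the function values, the analysis must preclude the estimator from driving the empirical squared error near zero by exploiting a \emph{wrong but equally flexible} active set. Concretely, one must show that for every $A'\neq A$ the smallest misclassification probability attainable by a monotone partition of $[0,1]^{A'}$ still exceeds $q(S_0^{\star},S_1^{\star})$, and that this excess is $\Theta(1)$ when $s$ is constant; once that uniform gap is in hand, and once one checks that independence of the components of $W$ is all that is needed to push the $d=s$ sieve estimate through on each $[0,1]^{A'}$, the remainder is the same Hoeffding-and-union-bound bookkeeping already carried out for Theorem~\ref{thm:consistency-binary}.
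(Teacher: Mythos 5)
Your proposal follows the same route the paper takes: bound the expected labeling number of the $s$-sparse class by $\binom{d}{s}$ times the $d=s$ labeling bound, apply Markov to control the number of distinct objective values, then Hoeffding plus a union bound to separate the empirical minimizer from anything outside the discrepancy ball, and finally specialize to $s=O(1)$, $n=\omega(\log d)$. The gap you flag at the end—that a wrong active set $A'$ must still induce $q$-value strictly above $q(S_0^\star,S_1^\star)$ by a $d$-independent margin—is exactly what Assumption~\ref{assumption:unique-minimizer-sparse} and the observation that $q(S_0,S_1)$ depends only on the type $|A'\cap A|$ (by exchangeability of coordinates under uniform $X$ and independent $W$) supply; the paper invokes this directly.

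One correction: you claim the ``rigid choice $M=\exp[n^{(2s-1)/(2s)}]$'' recovers the theorem's stated form but would require the stronger $\log d=o(n^{(2s-1)/(2s)})$. That is a misreading. In the paper's proof the Markov threshold is $t=\binom{d}{s}\exp[n^{(2s-1)/(2s)}]$, i.e.\ $M=\binom{d}{s}\exp[n^{(2s-1)/(2s)}]$ in your parametrization, which is why the first term of Theorem~\ref{thm:noisy-input-sparse} carries no $\binom{d}{s}$ and the $s\log d$ appears only in the second term as $\exp[s\log d + n^{(2s-1)/(2s)} - \Theta(n)]$. With that choice the fixed bound already yields consistency under $n=\omega(\log d)$ for constant $s$; the tunable $M$ is a correct (and arguably cleaner) way to see this, but it is not what ``makes the $\omega(\log d)$ regime accessible,'' and your closing parenthetical that the stronger condition is ``covered by the standing hypotheses'' of Theorem~\ref{thm:noisy-input-sparse} does not hold---the theorem imposes no relation between $d$ and $n$. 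None of this affects the validity of the argument you actually give, since your $M$ is chosen in the permissible window.
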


\subsection{Support recovery}\label{subsection:noisy-input-support-recovery}
In this subsection, we give support recovery guarantees for Algorithm \ref{alg:sequential-recovery}. The guarantees will be in terms of differences of probabilities.
\begin{definition}
Let $Y_1 = f(X_1 + W_1)$ and $Y_2 = f(X_2 + W_2)$ be two independent samples from the model. For $k \in A$, define
\[\overline{p}_{k} \triangleq \mathbb{P} \left( Y_1 = 1, Y_2 = 0~|~q(1,2,k) = 1 \right) - \mathbb{P} \left( Y_1 = 0, Y_2 = 1~|~q(1,2,k) = 1 \right).\]
Assume without loss of generality that $A = \{1, \dots, s\}$ and $\overline{p}_1 \leq \overline{p}_2 \leq \dots \leq \overline{p}_s$.
\end{definition}
\begin{theorem}\label{thm:support-recovery-noisy-input-multiple}
Let $B$ be the set of indices corresponding to running Algorithm \ref{alg:sequential-recovery}. 
It holds that $B = A$ with probability at least \[1 - (d-s) \sum_{k=1}^s (d+1 -k)  \exp\left(-\frac{n \overline{p}_{k}^2}{16} \right).\]
\end{theorem}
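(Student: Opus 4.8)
\noindent\emph{Proof proposal.}
The argument will parallel the proof of Theorem~\ref{thm:multiple-coordinates}, with $\overline{p}_k$ playing the role that $p_k$ plays in the Noisy Output Model. Recall that Algorithm~\ref{alg:sequential-recovery} proceeds greedily: at each of $s$ stages it forms, for every coordinate $k$ still in the candidate pool, the empirical estimate $\widehat{\overline p}_k$ of $\overline p_k$ --- the average over sample pairs $(i,j)$ with $q(i,j,k)=1$ of the signed indicator $\mathbbm{1}\{Y_i=1,Y_j=0\}-\mathbbm{1}\{Y_i=0,Y_j=1\}$ --- adds to $B$ the coordinate maximizing $\widehat{\overline p}_k$, and removes it from the pool. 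The first step is the population identity: for every inactive coordinate $k\notin A$ we have $\overline p_k=0$. Indeed, since $X$ is uniform on $[0,1]^d$ its coordinates are independent, and $X\perp W$, so the event $\{X_{1,k}>X_{2,k}\}$ is independent of $\big((X_1+W_1)_A,(X_2+W_2)_A\big)$ and hence of $(Y_1,Y_2)=\big(f((X_1+W_1)_A),f((X_2+W_2)_A)\big)$; combined with the exchangeability $(Y_1,Y_2)\overset{d}{=}(Y_2,Y_1)$ this forces the two conditional probabilities defining $\overline p_k$ to coincide. For active $k$, $\overline p_k$ enters only through the exponents of the bound.

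Next I would establish a concentration inequality of the form $\mathbb{P}\big(|\widehat{\overline p}_k-\overline p_k|\ge t\big)\le 2\exp(-cnt^2)$. Since $\widehat{\overline p}_k$ is a conditional average over sample pairs, it is (up to the random number of conditioning pairs) a $U$-statistic of order two with kernel bounded in $[-1,1]$; the clean way to get the stated constant is either Hoeffding's reduction of a $U$-statistic to an average of $\lfloor n/2\rfloor$ independent blocks, or --- if Algorithm~\ref{alg:sequential-recovery} compares on disjoint paired samples --- a direct Hoeffding bound on those independent pairs. Taking $t=\overline p_k/2$ and absorbing the $\lfloor n/2\rfloor$ into the constant yields the $\exp(-n\overline p_k^2/16)$ appearing in the statement.

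The final step is a stage-by-stage union bound over the run of Algorithm~\ref{alg:sequential-recovery}. Condition on the event that no error has occurred in earlier stages, so the candidate pool consists of the not-yet-selected active coordinates together with all $d-s$ inactive ones. The algorithm errs at the current stage only if some inactive $\ell$ has $\widehat{\overline p}_\ell$ exceeding $\widehat{\overline p}_k$ for the remaining active coordinate $k$ of largest true signal; since $\overline p_\ell=0$, $\{\widehat{\overline p}_\ell\ge\widehat{\overline p}_k\}\subseteq\{\widehat{\overline p}_\ell\ge\overline p_k/2\}\cup\{\widehat{\overline p}_k\le\overline p_k/2\}$, and each of these has probability $\le\exp(-n\overline p_k^2/16)$ by the concentration bound. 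A union over the $d-s$ inactive coordinates at each stage, together with careful bookkeeping of how many candidates (and which true signal $\overline p_k$) are relevant at each of the $s$ stages --- which is where the coefficients $(d+1-k)$ arise, larger than the $(s+1-k)$ obtained in the Noisy Output analysis --- produces the claimed bound $(d-s)\sum_{k=1}^s(d+1-k)\exp(-n\overline p_k^2/16)$.

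I expect the concentration step to be the main obstacle: one must control the dependence among the $\binom{n}{2}$ pair terms and the randomness of the conditioning event $\{X_{1,k}>X_{2,k}\}$ without losing the constant $1/16$, and then check that conditioning on ``no earlier mistake'' in the stage-wise argument does not invalidate the bounds used at the current stage --- it does not, since those bounds are worst-case unions over coordinates and do not depend on the history. The only genuinely new ingredient relative to Theorem~\ref{thm:multiple-coordinates} is the identity $\overline p_k=0$ for inactive $k$ in the Noisy Input Model, for which the structure $Y=f((X+W)_A)$ and the independence of the coordinates of $X$ are exactly what make the conditioning harmless.
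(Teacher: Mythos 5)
Your sketch captures the right high-level ingredients --- the identity $\overline{p}_k = 0$ for inactive $k$ (which indeed follows from independence of the coordinates of $X$ and of $W$, and is used implicitly in the paper's Lemma~\ref{lemma:support-recovery-noisy-input-probability}/\ref{lemma:one-coordinate}), concentration of pairwise comparison statistics, and a stage-by-stage union bound --- but it rests on a mischaracterization of Algorithm~\ref{alg:sequential-recovery}. S-LPSR does not compute empirical estimates $\widehat{\overline{p}}_k$ and select the argmax; at each stage it solves the linear program \eqref{eq:support-recovery-sequential-first}--\eqref{eq:support-recovery-sequential-last} and outputs the index of the \emph{largest LP variable} $v_k$, and the LP optimum can be fractional. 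So the real content of the proof is to show that \emph{any feasible $\overline{v}$ whose maximal entry sits at an inactive coordinate has strictly larger LP objective than $e_i$} for an active $i$; this is exactly what Lemma~\ref{lemma:one-coordinate} (invoked here with $p$ replaced by $\overline{p}$ via Lemma~\ref{lemma:support-recovery-noisy-input}) establishes, by writing $z(\overline{v}) - z(e_i)$ as a linear combination of pairwise count differences and applying McDiarmid to that statistic. Your proposal jumps straight to concentrating $\widehat{\overline{p}}_k$ and never links the LP's fractional optimum back to that comparison, so it proves a theorem about a different (simpler) greedy algorithm than the one stated. The two concentration devices (your $U$-statistic/Hoeffding blocking versus the paper's McDiarmid on the pair sum $z(\overline{v})-z(e_i)$) are interchangeable in spirit and both yield a constant of order $1/16$; this is not where the gap lies.

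A second, smaller issue: you acknowledge but do not actually derive the coefficient $(d+1-k)$, attributing it to ``careful bookkeeping.'' The paper's own proof is terse on this point (``nearly identical to Theorem~\ref{thm:multiple-coordinates}''); note that the analogue in the Noisy Output Model, Theorem~\ref{thm:multiple-coordinates}, gives $(s+1-k)$, so the $(d+1-k)$ appearing here is simply a looser version (valid since $d\geq s$), not a consequence of genuinely new bookkeeping. Your claim that $(d+1-k)$ is intrinsically ``larger than the $(s+1-k)$ obtained in the Noisy Output analysis'' because of some structural difference between the models is not supported; the same stage-wise union over $s$ rounds and $d-s$ inactive competitors gives $(s+1-k)$ here too.
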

We can now give a guarantee of the success of Algorithm \ref{alg:two-stage}, using Algorithm \ref{alg:sequential-recovery} for support recovery.


\begin{corollary}\label{corollary:noisy-input-support-recovery}
Suppose that $\overline{p}_1 = \Theta(1)$. Let $N = s \cdot n$ be the number of samples used by Algorithm \ref{alg:sequential-recovery}. If $N =s \cdot \omega( \log(d))$, then Algorithm \ref{alg:sequential-recovery} recovers the true support w.h.p. as $s, d \to \infty$.
\end{corollary}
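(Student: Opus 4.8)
The plan is to derive Corollary~\ref{corollary:noisy-input-support-recovery} directly from the finite-sample bound of Theorem~\ref{thm:support-recovery-noisy-input-multiple}, exactly as Corollary~\ref{corollary:recovery} is obtained from Theorem~\ref{thm:multiple-coordinates} in the Noisy Output setting. It suffices to show that, in the stated regime, the failure probability
\[
(d-s)\sum_{k=1}^{s}(d+1-k)\exp\!\left(-\frac{n\,\overline{p}_k^2}{16}\right)
\]
tends to $0$ as $s,d\to\infty$. The only work is a crude bound on the polynomial-in-$d$ prefactor, combined with the observation that the exponent is linear in $n$.

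First I would use the ordering $0<\overline{p}_1\le\overline{p}_2\le\cdots\le\overline{p}_s$, which gives $\overline{p}_k^2\ge\overline{p}_1^2$ and hence $\exp(-n\overline{p}_k^2/16)\le\exp(-n\overline{p}_1^2/16)$ for every $k$. Next, using $d+1-k\le d$, $d-s\le d$, and $s\le d$, the failure probability is at most $d^{3}\exp(-n\overline{p}_1^2/16)$. Taking logarithms, it then suffices to show $3\log d-\tfrac{n\overline{p}_1^2}{16}\to-\infty$. Since $\overline{p}_1=\Theta(1)$, there is a constant $c>0$ with $\overline{p}_1^2\ge c$ for all sufficiently large $s$, so $\tfrac{n\overline{p}_1^2}{16}\ge\tfrac{cn}{16}$. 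Finally, from $N=s\cdot n$ and $N=s\cdot\omega(\log d)$ we get $n=\omega(\log d)$, so $\tfrac{cn}{16}=\omega(\log d)$ dominates $3\log d$; hence the logarithm diverges to $-\infty$ and the failure probability vanishes, giving $B=A$ w.h.p.

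I do not expect a genuine obstacle here: the argument is a routine asymptotic estimate. The two points that need a moment's care are (i) confirming that the sorted quantities satisfy $\overline{p}_k\ge\overline{p}_1>0$, so that replacing $\overline{p}_k$ by $\overline{p}_1$ in the exponent is legitimate (this uses that $\overline{p}_1=\Theta(1)$ is a positive constant), and (ii) checking that the prefactor is genuinely polynomial in $d$; this is automatic because support recovery presupposes $s\le d$, so no growth restriction on $s$ beyond $s\to\infty$ is required.
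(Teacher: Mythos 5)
Your proof is correct and follows essentially the same route as the paper, which derives this corollary by the same computation used for Corollary~\ref{corollary:recovery}: bound the prefactor by $d^3$, use $\overline{p}_k \geq \overline{p}_1 = \Theta(1)$, and note that $N = s\cdot\omega(\log d)$ gives $n = \omega(\log d)$, so $3\log d - n\overline{p}_1^2/16 \to -\infty$. The only cosmetic difference is that you spell out the steps (ordering of $\overline{p}_k$, $n=N/s$) that the paper leaves implicit.
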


\begin{corollary}\label{corollary:two-stage-noisy-input}
Suppose that $q_{\text{min}}\left(\delta\right) - q\left(S_0^{\star}, S_1^{\star} \right) = \Theta(1)$.
Suppose also that $\overline{p}_1 = \Theta(1)$, and that the components of $W$ are independent.
Consider running Algorithm \ref{alg:two-stage} using $s \times n$ samples for sequential support recovery and an additional $n$ samples for function value estimation. Let $N = (s+1) n$ be the total number of samples, and let $\hat{f}_N$ be the estimated function. If $N = (s+1)\omega( \log(d))$ and $N = (s+1)e^{\omega(s^2)}$, then $\hat{f}_N$ is a consistent estimator.
\end{corollary}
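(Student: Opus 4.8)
The plan is to decompose the failure event of the two-stage estimator $\hat{f}_N$ of Algorithm~\ref{alg:two-stage} into a support-recovery failure and a function-value-estimation failure conditioned on correct recovery, and to bound these two contributions using Theorem~\ref{thm:support-recovery-noisy-input-multiple} and Theorem~\ref{thm:consistency-binary} respectively.

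Fix $\delta = \epsilon > 0$, and let $B$ be the set of active coordinates returned by S-LPSR (Algorithm~\ref{alg:sequential-recovery}) run on the first $s\cdot n$ samples. The remaining $n$ samples used for function-value estimation are independent of that block, so conditioning on $\{B = A\}$ does not change their joint law; moreover, once the active set is known, the coordinates outside $A$ never enter Problem~\eqref{eq:two-stage-first}--\eqref{eq:two-stage-last}, so conditionally on $\{B = A\}$ the second stage is exactly the non-sparse $d = s$ estimation problem on $n$ samples analyzed in Theorem~\ref{thm:consistency-binary} (and, since $\hat{f}_N$ and $f$ then depend only on the coordinates in $A$, the $L_2$ norm over $[0,1]^d$ coincides with the one over $[0,1]^s$). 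A union bound gives
\[
\mathbb{P}\!\left(\|\hat{f}_N - f\|_2 > \delta\right) \le \mathbb{P}(B \ne A) + \mathbb{P}\!\left(\|\hat{f}_N - f\|_2 > \delta \,\middle|\, B = A\right).
\]
By Theorem~\ref{thm:support-recovery-noisy-input-multiple} and the ordering $\overline{p}_1 \le \overline{p}_k$, the first term is at most $(d-s)\sum_{k=1}^{s}(d+1-k)\exp(-n\overline{p}_k^2/16) \le d^3\exp(-n\overline{p}_1^2/16)$; by Theorem~\ref{thm:consistency-binary} (valid under Assumption~\ref{assumption:unique-minimizer} with $W$ having independent components), the second term is at most
\[
\exp\!\left[(2^s + 2\log 2 - 1)\, n^{\frac{s-1}{s}} - n^{\frac{2s-1}{2s}}\right] + \left(\exp\!\left[n^{\frac{2s-1}{2s}}\right] + 1\right)\exp\!\left(-\frac{(q_{\text{min}}(\delta) - q(S_0^\star,S_1^\star))^2\, n}{36}\right).
\]

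It then remains to send all three pieces to $0$ under the hypotheses $n = N/(s+1) = \omega(\log d)$ and $n = e^{\omega(s^2)}$ (both of which force $n\to\infty$). The recovery term is $\exp(3\log d - \Theta(n))$ because $\overline{p}_1 = \Theta(1)$, hence vanishes precisely when $n = \omega(\log d)$. The last term has exponent $n^{\frac{2s-1}{2s}} - \Theta(n) = n\big(n^{-1/(2s)} - \Theta(1)\big)$, which goes to $-\infty$ as soon as $n^{1/(2s)} = \omega(1)$, i.e. $\log n = \omega(s)$, which is implied by $\log n = \omega(s^2)$. The delicate term is the first: using $\frac{2s-1}{2s} - \frac{s-1}{s} = \frac{1}{2s}$, its exponent equals $n^{\frac{s-1}{s}}\big[(2^s + 2\log 2 - 1) - n^{1/(2s)}\big]$, which goes to $-\infty$ once $2^s = o(n^{1/(2s)})$, equivalently $\log n = \omega(s^2)$, which is exactly the hypothesis $n = e^{\omega(s^2)}$. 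Summing the three vanishing bounds gives $\mathbb{P}(\|\hat{f}_N - f\|_2 > \delta) \to 0$ for every $\delta > 0$, i.e. $\hat{f}_N$ is consistent. When $s$ is constant, $e^{\omega(s^2)} = \omega(1)$ and both $\log n = \omega(s)$ and $\log n = \omega(s^2)$ hold automatically, so $N = \omega(\log d)$ alone suffices.

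The step I expect to require the most care is this final asymptotic balancing: one must verify that the $2^s$ prefactor multiplying $n^{(s-1)/s}$ in the bound of Theorem~\ref{thm:consistency-binary} is exactly overcome by the $n^{1/(2s)}$ gain arising from the gap $\frac{1}{2s}$ between the exponents $\frac{2s-1}{2s}$ and $\frac{s-1}{s}$, which is what pins the threshold for consistency at $n = e^{\omega(s^2)}$. The independence/conditioning reduction to the $d = s$ case, and the crude estimate $(d-s)\sum_{k}(d+1-k) \le d^3$, are routine.
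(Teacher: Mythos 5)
Your proposal is correct and follows essentially the same route as the paper's own proof: a union bound splitting the failure event into a support-recovery failure (controlled by Theorem~\ref{thm:support-recovery-noisy-input-multiple}) and a function-estimation failure conditioned on correct recovery (controlled by Theorem~\ref{thm:consistency-binary} in the $d=s$ case), followed by the asymptotic verification that all three resulting terms vanish under $n = \omega(\log d)$ and $n = e^{\omega(s^2)}$. You actually spell out the asymptotic balancing (in particular how the gap $\tfrac{1}{2s}$ between the exponents $\tfrac{2s-1}{2s}$ and $\tfrac{s-1}{s}$ overcomes the $2^s$ prefactor, pinning the threshold at $\log n = \omega(s^2)$) more explicitly than the paper, which states the conclusion directly; this is a matter of detail, not of approach.
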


\section{Experimental results}\label{sec:experiments}
All algorithms were implemented in Java version 8, using Gurobi version 6.0.0.
\subsection{Support recovery}
We test the support recovery algorithms on random synthetic instances. 
Let $A = \{1, \dots, s\}$ without loss of generality. First, randomly sample $r$ ``anchor points'' in $[0,1]^d$, calling them $Z_1, \dots, Z_r$. The parameter $r$ governs the complexity of the function produced. In our experiment, we set $r = 10$. Next, randomly sample $X_1, \dots, X_n$ in $[0,1]^d$. For $i \in \{1, \dots, n\}$, assign $Y_i = 1 + W_i$ if $Z_j\preceq_{A} X_i$ for some $j \in \{1, \dots, r\}$, and assign $Y_i = W_i$ otherwise. The linear programming based algorithms for support recovery, LPSR and S-LPSR, are compared to the simultaneous approach, IPIR, which estimates the active coordinates while also estimating the function values. Note that even though the proof of support recovery using S-LPSR requires fresh data at each iteration, our experiments do not use fresh data. We keep $s = 3$ fixed and vary $d$ and $n$. The error is Gaussian with mean $0$ and variance $0.1$, independent across coordinates. We report the percentages of successful recovery (see Table \ref{table:synthetic}). The IPIR algorithm performs the best on nearly all settings of $(n,d)$. This suggests that the objective of the IPIR algorithm- to minimize the number of misclassifications on the data- gives the algorithm an advantage in selecting the true active coordinates. The S-LPSR algorithm generally does better than the LPSR algorithm; for $n=250$ samples, they perform about the same when $d \in \{5, 10\}$ but for $d=20$, the LPSR algorithm succeeds $45\%$ of the time while the S-LPSR algorithm succeeds $70\%$ of the time, and when $d=50$, the LPSR algorithm was not able to recover the correct coordinates on any trial, while the S-LPSR algorithm recovered them $55\%$ of the time. It appears that determining the coordinates one at a time provides implicit regularization.

\begin{table}[htp]
  \caption{Performance of support recovery algorithms on synthetic instances. Each line of the table corresponds to 20 trials.}
  \label{table:synthetic}
  \centering
  \begin{tabular}{ccccccccccccc}
  \toprule
  \multirow{3}{*}{} & \multicolumn{4}{c}{\textbf{IPIR}} & \multicolumn{4}{c}{\textbf{LPSR}} &  \multicolumn{4}{c}{\textbf{S-LPSR}} \\
    & \multicolumn{4}{c}{$d=$} & \multicolumn{4}{c}{$d=$} & \multicolumn{4}{c}{$d=$}\\
    $n$ & 5 & 10 &  20 & 50 & 5 & 10 &  20 & 50  & 5 &  10 & 20 & 50 \\
    \midrule
    $50$ &65 & 60 & 60 & 40 & 70 & 25 & 5 & 0 & 55 & 50& 15 & 5\\
    $100$ &90 & 90 & 70 & 70 & 95 & 40 & 20 & 0 & 65 & 65 & 55 & 20\\
     $150$ &100 & 100 & 95 & 90 & 100 & 60 & 30 & 0 & 95 & 80 & 50 & 45\\
     $200$ & 100 & 100 & 90 & 95 & 100 & 50 & 35 & 5 & 100 & 90 & 65 & 40\\
     $250$ & 100 & 100 & 90 & 90 & 95 & 75 & 45 & 0 & 90 & 75 & 70 & 55\\
    \bottomrule
  \end{tabular}
\end{table}


\subsection{Cancer classification using gene expression data}
In order to assess the applicability of our sparse monotone regression approach, we apply it to cancer classification using gene expression data. The data is drawn from the COSMIC database \cite{Forbes2010}, which is widely used in quantitative research in cancer biology. Each patient in the database is identified as having a certain type of cancer. For each patient, gene expressions are reported as a z-score. Namely, if $\mu_G$ and $\sigma_G$ are the mean and standard deviation of the gene expression of gene $G$ and $x$ is the gene expression of a certain patient, then his or her z-score would be equal to $\frac{x - \mu_G}{\sigma_G}$. 
We filter the patients by cancer type, selecting those with skin and lung cancer, two common cancer types. There are $236698$ people with lung or skin cancer in the database, though the database only includes gene expression data for $1492$ of these individuals. Of these, $1019$ have lung cancer and $473$ have skin cancer. A classifier always selecting ``lung'' would have an expected correct classification rate of $1019/1492 \approx 68\%$. Therefore this rate should be regarded as the baseline classification rate.

Our goal is to use gene expression data to classify the patients as having either skin or lung cancer. We associate skin cancer as a ``0'' label and lung cancer as a ``1'' label.
We only include the $20$ most associated genes for each of the two types, according to the COSMIC website. This leaves $31$ genes, since some genes appear on both lists. We additionally include the negations of the gene expression values as coordinates, since a lower gene expression of certain genes may promote lung cancer over skin cancer. The number of coordinates is therefore equal to $62$. The number of active genes is ranged between $1$ and $5$.


We perform both simultaneous and two-stage isotonic regression, comparing the IPIR and TSIR algorithms, using S-LPSR to recover the coordinates in the two-stage approach. Since for every gene, its negation also corresponds to a coordinate, we added additional constraints. In IPIR, we use variables $v_k \in \{0,1\}$ to indicate whether coordinate $k$ is in the estimated set of active coordinates. In LPSR and S-LPSR, we use variables $v_k \in [0,1]$ instead. In order to incorporate the constraints regarding negation of coordinates in IPIR, we included the constraint $v_i + v_j \leq 1$ for pairs $(i,j)$ such that coordinate $j$ is the negation of coordinate $i$. In S-LPSR, once a coordinate $v_i$ was selected, its negation was set to zero in future iterations. The LPSR algorithm, however, could not be modified to take this additional structure into account without using integer variables. Adding the constraints $v_i + v_j \leq 1$ when coordinate $j$ is the negation of coordinate $i$ proved to be insufficient. Therefore, we do not include the LPSR algorithm in our experiments on the COSMIC database.

We compare our isotonic regression algorithms to two classical algorithms: $k$-Nearest Neighbors (\cite{Fix1951}) and the Support Vector Machine (\cite{Cortes1995}). Given a test sample $x$ and an odd number $k$, the $k$-Nearest Neighbors algorithm finds the $k$ closest training samples to $x$. The label of $x$ is chosen according to the majority of the labels of the $k$ closest training samples. The SVM algorithm used is the soft-margin classifier with penalty $C$ and polynomial kernel given by $K(x,y) = (1 + x \cdot y)^m$.

In Table \ref{table:main-results}, each row is based on 10 trials, with 1000 test data points chosen uniformly and separately from the training points. 
The two-stage method was generally faster than the simultaneous method. With $200$ training points and $s=3$, the simultaneous method took $260$ seconds on average per trial, while the two-stage method took only $42$ seconds per trial. The simultaneous method became prohibitively slow for higher values of $n$. The averages for $k$-Nearest Neighbors and Support Vector Machine are taken as the best over parameter choices in hindsight. For $k$-Nearest Neighbors, $k \in \{1,3,5,7,9,11,15\}$, and for SVM, $C \in \{10,100,500,1000 \}$ and $m \in \{1,2,3,4\}$. The fact that the sparse isotonic regression method outperforms the $k$-NN classifier and the polynomial kernel SVM by such a large margin can be explained by a difference in structural assumptions; the results suggest that monotonicity, rather than proximity or a polynomial functional relationship, is the correct property to leverage.

\begin{table}[]
\caption{Comparison of classifier success rates on COSMIC data. Top row data is according to the ``min'' interpolation rule and bottom row data is according to the ``max'' interpolation rule.}\label{table:main-results}
\begin{tabular}{ccccccccccccc}
\toprule
\multirow{3}{*}{$n$} & \multicolumn{5}{c}{\textbf{IPIR}} & \multicolumn{5}{c}{\textbf{TSIR + S-LPSR}} & \textbf{$k$-NN} & \textbf{SVM} \\
 & \multicolumn{5}{c}{$s=$}  & \multicolumn{5}{c}{$s=$} & & \\
  & 1 & 2 & 3 & 4 & 5  & 1 & 2 & 3 & 4 & 5  & &\\
 \midrule
 \multirow{2}{*}{100}  &83.1 & 84.6& 76.8&66.2 &53.8  & 82.4&84.6 &77.8 &73.0 &65.4 &69.8 &63.8\\
  & 83.9&91.8 &91.0 & 85.7& 75.7& 82.9 &90.4 &88.9 &87.4 &83.3 & &\\
  \midrule
 \multirow{2}{*}{200}  & 85.4&88.1 &84.3 &73.9 &62.7  &85.4 &89.3 &86.7 &81.2 &76.9 &76.6 &72.6\\
  &85.8 &92.6 &96.4 &88.9 &83.9  &85.8 &94.5 &95.9 &95.3 &93.0 & &\\
  \midrule
 \multirow{2}{*}{300}  & -& -& -& -& - &84.7 &91.7 &89.0 &84.4 &80.2 &76.6 &74.2\\
  &- & -&- & -&  -& 85.1& 94.2& 95.6&95.9 &94.8 & &\\
  \midrule
 \multirow{2}{*}{400} & -&- &- & -& -&85.6 &91.8 &89.7 &87.3 &81.7 &78.6 &77.4\\
 & -&- & -& -& -&85.8 &94.0 &95.7 &96.4&95.7 & &\\
 \bottomrule
\end{tabular}
\end{table}



The results suggest that the correct sparsity level is $s = 3$. With $n = 400$ samples, the classification accuracy rate is $95.7\%$. When the sparsity level is too low, the monotonicity model is too simple to accurately describe the monotonicity pattern. On the other hand, when the sparsity level is too high, fewer points are comparable, which leads to fewer monotonicity constraints.
For $n \in \{100, 200\}$ and $d \in \{1,2,3,4,5\}$, TSIR + S-LPSR does at least as well as IPIR on 15 out of 20 of $(n,d)$ pairs, and outperforms on 12 of these. This result is surprising, because synthetic experiments show that IPIR outperforms S-LPSR on support recovery. 

We further investigate the TSIR + S-LPSR algorithm. Figure \ref{fig:labellings} shows how the two-stage procedure labels the training points. 
The high success rate of the sparse isotonic regression method suggests that this nonlinear picture is quite close to reality. The observed clustering of points may be a feature of the distribution of patients, or could be due to a saturation in measurement.
Figure \ref{fig:robustness} studies the robustness of TSIR + S-LPSR. Additional synthetic zero-mean Gaussian noise is added to the inputs, with varying standard deviation. The ``max'' classification rule is used. $200$ training points and $1000$ test points were used. Ten trials were run, with one standard deviation error bars indicated in gray. The results indicate that TSIR + S-LPSR is robust to moderate levels of noise. 
\begin{figure}[h]
\centering
\begin{subfigure}{0.45\textwidth}
\centering
\includegraphics[trim={2.5cm 8cm 2cm 7.5cm},clip,width=\textwidth]{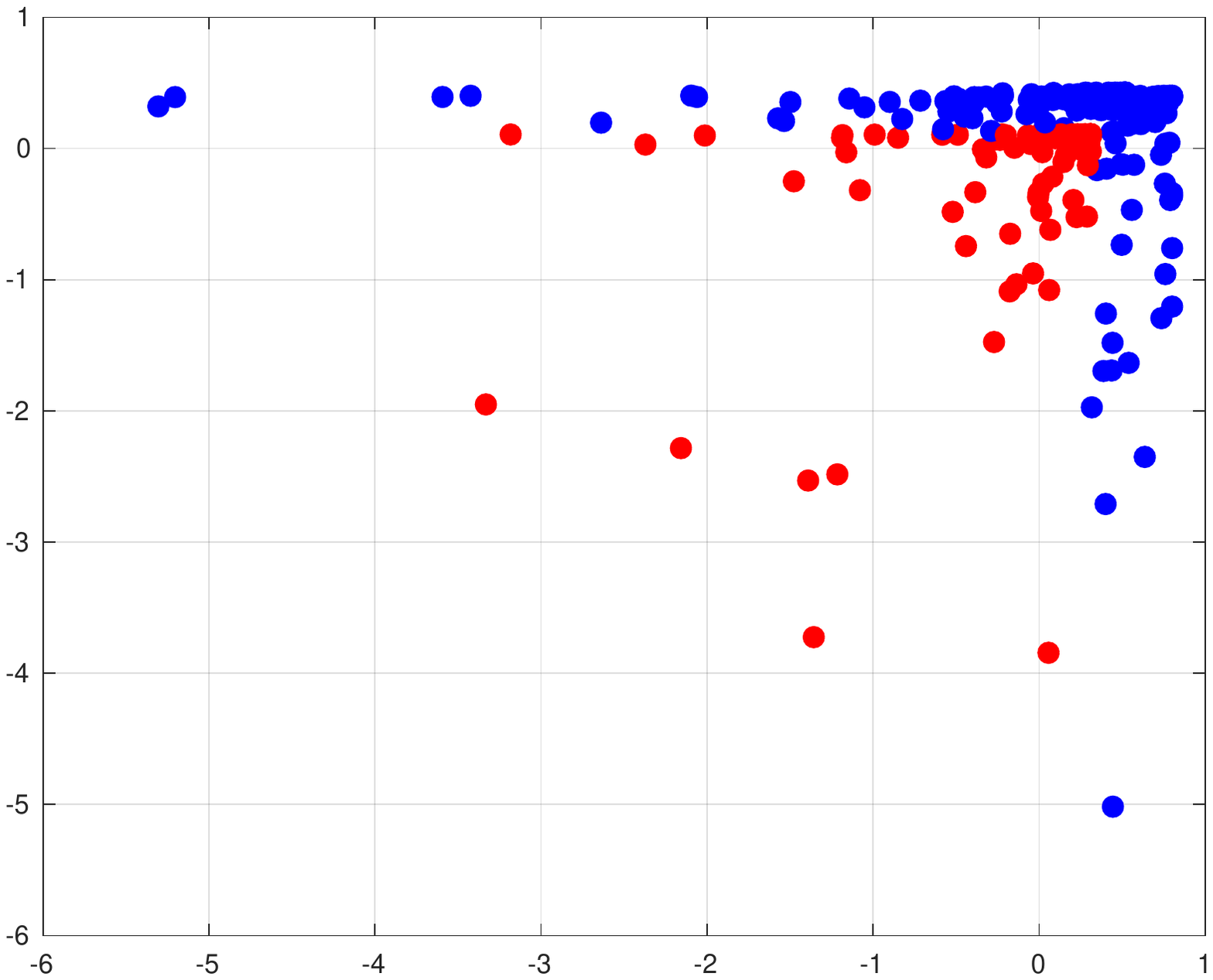}
\caption{$s=2$.} 
\label{fig:labellings-s=2}
\end{subfigure}
\begin{subfigure}{0.45\textwidth}
\centering
\includegraphics[trim={2.5cm 8cm 2cm 7.5cm},clip,width=\textwidth]{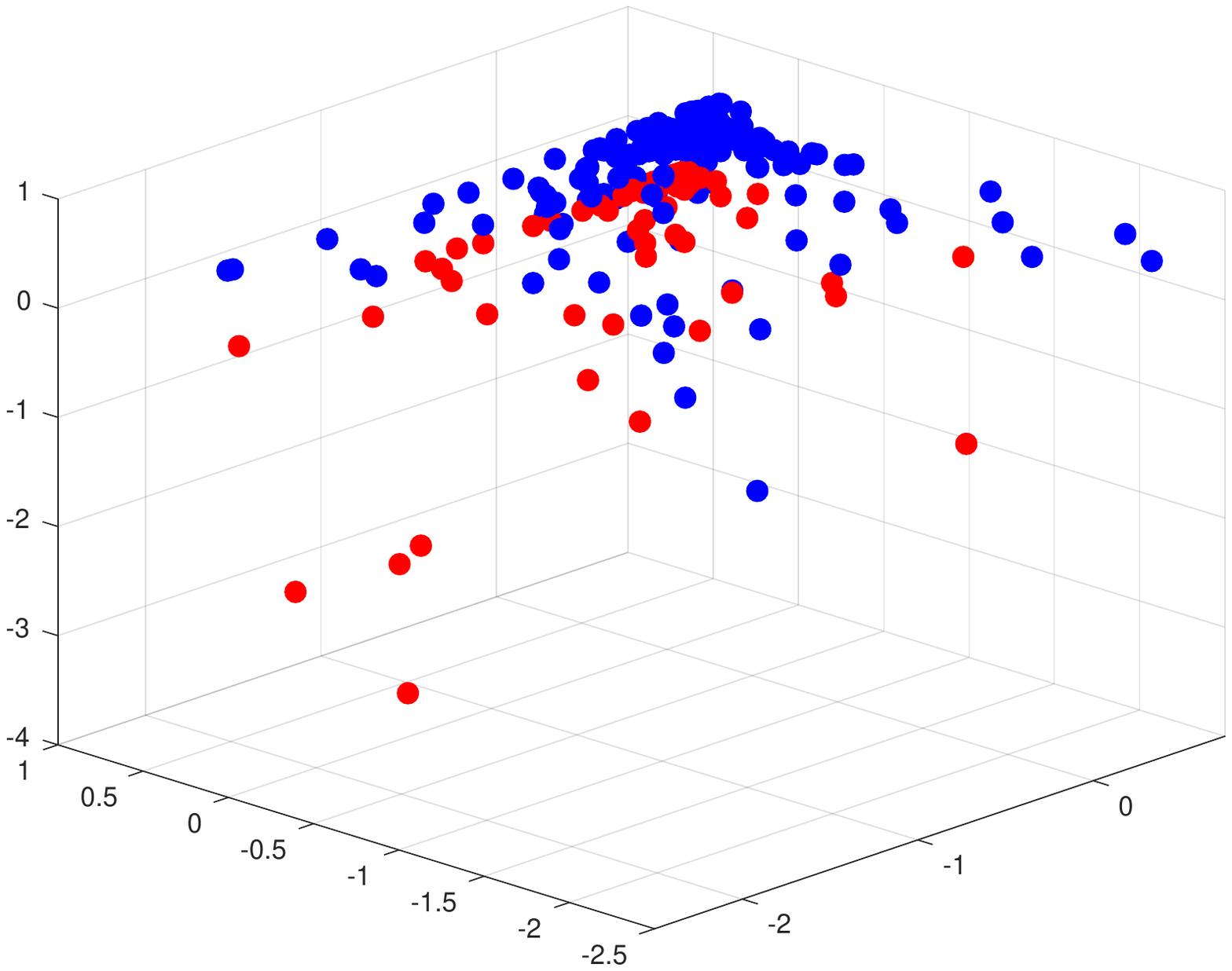}
\caption{$s=3$.} 
\label{fig:labellings-s=3}
\end{subfigure}
\caption{Illustration of the TSIR + S-LPSR algorithm. Blue and red markers correspond to lung and skin cancer, respectively.}
\label{fig:labellings}
\end{figure}


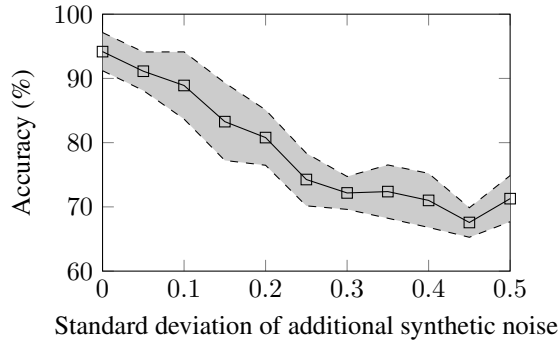
\begin{figure}[h]
\centering
\begin{tikzpicture}[scale=1]
\begin{axis}[
    width=7cm,height=5cm,
    xlabel={Standard deviation of additional synthetic noise},
    ylabel={Accuracy (\%)},
    xmin=0, xmax=0.5,
    ymin=60, ymax=100,
    xtick={0, 0.1, 0.2, 0.3, 0.4, 0.5},
    ytick={50,60,70,80,90,100},
    grid style=dashed,
]
 
\addplot[name path=mean,
    color=black,
    mark=square,
    ]
    coordinates {
    (0,94.17)(0.05,91.12)(0.1,88.9)(0.15,83.27)(0.2,80.79)(0.25,74.26)(0.3,72.17)(0.35,72.38)(0.4,71.01)(0.45,67.57)(0.5,71.28)
    };

\addplot[name path=mean-plus,
    color=black,
    dashed
    ]
    coordinates {
    (0,97.14)(0.05,94.12)(0.1,94.13)(0.15,89.31)(0.2,85.10)(0.25,78.35)(0.3,74.73)(0.35,76.53)(0.4,75.22)(0.45,69.87)(0.5,74.87)
    }; 
    
    \addplot[name path=mean-minus,
    color=black,
    dashed
    ]
    coordinates {
    (0,91.20)(0.05,88.12)(0.1,83.67)(0.15,77.23)(0.2,76.48)(0.25,70.16)(0.3,69.61)(0.35,68.23)(0.4,66.80)(0.45,65.27)(0.5,67.69)
    }; 
    
    \addplot[gray!40] fill between[of=mean and mean-plus];   
    \addplot[gray!40] fill between[of=mean and mean-minus];    
\end{axis}
\end{tikzpicture}
\caption{Robustness to error.}\label{fig:robustness}
\end{figure}

\FloatBarrier
\section{Conclusion}
In this paper, we have considered the sparse isotonic regression problem under two noise models: Noisy Output and Noisy Input. We have formulated optimization problems to recover the active coordinates, and then estimate the underlying monotone function. We provide explicit guarantees on the performance of these estimators. Finally, we demonstrate the applicability of our approach to a cancer classification task, showing that our methods outperform widely-used classifiers. While the task of classifying patients with two cancer types is relatively simple, the accuracy rates illustrate the modeling power of the sparse monotone regression approach. 
\clearpage
\bibliographystyle{plain}
\bibliography{MIT_Project_4}


\FloatBarrier
\clearpage
\setcounter{section}{0}
\section{Algorithms}
This section gives further detail on the algorithms introduced in the paper.

Problem \eqref{eq:optimization-first}-\eqref{eq:optimization-last} can be encoded as a single mixed-integer convex minimization problem, as follows. The algorithm is below.
Binary variables $v_k$ indicate the estimated active coordinates; $v_k = 1$ means that the optimization program has determined that coordinate $k$ is active. The variables $F_i$ represent the estimated function values at data points $X_i$.
\begin{breakablealgorithm}
\caption{Integer Programming Isotonic Regression (IPIR)}\label{alg:simultaneous}
\begin{algorithmic}[1]
\Require{Values $(X_1, Y_1), \dots, (X_n, Y_n)$; sparsity level $s$}
\Ensure{An estimated function $\hat{f}_n$}
\State Solve the following optimization problem.
\begin{align}
&\min_{v, F} \sum_{i=1}^n \left(Y_i - F_i\right)^2 \label{eq:simultaneous-first}\\
\text{s.t. } &\sum_{k=1}^d v_k = s \\
& \sum_{k=1}^d q(i,j,k) v_k \geq F_i - F_j & \forall i, j \in \{1, \dots, n\} \label{eq:constraint-simultaneous-monotonicity}\\
&v_k \in \{0,1\} &\forall k \in \{1, \dots, d\}\\
&F_i \in \mathcal{R} &\forall i \in \{1, \dots, n\}  \label{eq:simultaneous-last}
\end{align}
\State Return the function $\hat{f}_n(x) = \min\{y \in \mathcal{R} : X_i \preceq x \implies y \geq F_i\}$.
\end{algorithmic}
\end{breakablealgorithm}
We claim that Problem \eqref{eq:simultaneous-first}-\eqref{eq:simultaneous-last} is equivalent to Problem \eqref{eq:optimization-first}-\eqref{eq:optimization-last}. Indeed, the monotonicity requirement is $X_i \preceq_{A} X_j \implies f(X_i) \leq f(X_j)$. The contrapositive of this statement is $f(X_i) > f(X_j) \implies X_i \not \preceq_{A} X_j$; alternatively, $f(X_i) > f(X_j) \implies \exists k \in A \text{ s.t. } X_{ik} > X_{jk}$. The contrapositive is expressed by Constraints \eqref{eq:constraint-simultaneous-monotonicity}. 

Recall that in the Noisy Input Model, the function $f$ is binary-valued, i.e. $\mathcal{R} = \{0, 1\}$. Let $\mathcal{S}^+ = \{i : Y_i = 1\}$ and $\mathcal{S}^- = \{i : Y_i = 0\}$. Note that if we replace the objective function \eqref{eq:simultaneous-first} by $\sum_{i \in \mathcal{S}^+} \left(1 - F_i \right) + \sum_{i \in \mathcal{S}^-} F_i$, then we obtain an equivalent formulation, which is a linear integer program.

We now give details for the two methods of support recovery, which are used in the two-stage approach.
\begin{breakablealgorithm}
\caption{Linear Programming Support Recovery (LPSR)}\label{alg:simultaneous-recovery}
\begin{algorithmic}[1]
\Require{Values $(X_1, Y_1), \dots, (X_n, Y_n)$; sparsity level $s$}
\Ensure{The estimated support, $\hat{A}$}
\State Solve the following optimization problem.
\begin{align}
& \min_{v,c} \sum_{i=1}^n \sum_{j=1}^n \sum_{k=1}^d c^{ij}_k \label{eq:support-recovery-first}\\
\text{s.t. } &\sum_{k=1}^d v_k = s\\
&\sum_{k=1}^d q(i,j,k) \left(v_k + c^{ij}_k\right) \geq 1 & \text{if } Y_i > Y_j \text{ and } \sum_{k=1}^d q(i,j,k) \geq 1 \label{eq:support-recovery-monotonicity}\\
&0 \leq v_k \leq 1 & \forall k \in \{1, \dots, d\}\\
&c^{ij}_k \geq 0 & \forall i \in \{1, \dots, n\}, j \in \{1, \dots, n\}, k \in \{1, \dots, p\} \label{eq:support-recovery-last}
\end{align}
\State Determine the $s$ largest values $v_i$, breaking ties arbitrarily. Let $\hat{A}$ be the set of the corresponding $s$ indices.
\end{algorithmic}
\end{breakablealgorithm}
In Problem \eqref{eq:support-recovery-first}-\eqref{eq:support-recovery-last}, the $v_k$ variables are meant to indicate the active coordinates, while the $c^{ij}_k$ variables act as correction in the monotonicity constraints. For example, if for one of the constraints \eqref{eq:support-recovery-monotonicity}, $\sum_{k=1}^d q(i,j,k) v_k = 0.7$, then we will need to set $c_k^{ij}= 0.3$ for some $(i,j,k)$ such that $q(i,j,k) = 1$. The $v_k$'s should therefore be chosen in a way to minimize the correction. 

Algorithm \ref{alg:sequential-recovery} determines the active coordinates one at a time, setting $s = 1$ in Problem \eqref{eq:support-recovery-first}-\eqref{eq:support-recovery-last}. Once a coordinate $i$ is included in the set of active coordinates, variable $v_i$ is set to zero in future iterations.
\begin{breakablealgorithm}
\caption{Sequential Linear Programming Support Recovery (S-LPSR)}\label{alg:sequential-recovery}
\begin{algorithmic}[1]
\Require{Values $(X_1, Y_1), \dots, (X_n, Y_n)$; sparsity level $s$}
\Ensure{The estimated support, $\hat{A}$}
\State $B \gets \emptyset$
\While {$|B| < s$} 
\State Solve the optimization problem in Algorithm \ref{alg:simultaneous-recovery} with $s = 1$:
\begin{align}
& \min \sum_{i=1}^n \sum_{j=1}^n \sum_{k=1}^d c^{ij}_k \label{eq:support-recovery-sequential-first}\\
\text{s.t. } &\sum_{k=1}^d v_k = 1\\
&v_i = 0 & \forall i \in B\\
&\sum_{k=1}^d q(i,j,k) \left(v_k + c^{ij}_k\right) \geq 1 & \text{if } Y_i > Y_j \text{ and } \sum_{k=1}^d q(i,j,k) \geq 1\\
&0 \leq v_k \leq 1 & \forall k \in \{1, \dots, d\}\\
&c^{ij}_k \geq 0 & \forall i \in \{1, \dots, n\}, j \in \{1, \dots, n\}, k \in \{1, \dots, d\} \label{eq:support-recovery-sequential-last}
\end{align}
\State Identify $i_{\text{max}}$ such that $v_{\text{max}} = \max\{v_i\}$, breaking ties arbitrarily. Set $B \gets B \cup \{i_{\text{max}}\}$.
\EndWhile
\State Return $\hat{A} = B$.
\end{algorithmic}
\end{breakablealgorithm}

We are now ready to state the two-stage algorithm for estimating the function $\hat{f}_n$.
\begin{breakablealgorithm}
\caption{Two Stage Isotonic Regression (TSIR)}\label{alg:two-stage}
\begin{algorithmic}[1]
\Require{Values $(X_1, Y_1), \dots, (X_n, Y_n)$; sparsity level $s$}
\Ensure{The estimated function, $\hat{f}_n$}
\State Estimate $\hat{A}$ by using Algorithm \ref{alg:simultaneous-recovery} or \ref{alg:sequential-recovery}. Let $v_k = 1$ if $k \in \hat{A}$ and $v_k = 0$ otherwise.
\State Solve the following optimization problem.
\begin{align}
&\min \sum_{i=1}^n \left(Y_i - F_i\right)^2 \label{eq:two-stage-first}\\
\text{s.t. } & \sum_{k=1}^d q(i,j,k) v_k \geq F_i - F_j & \forall i, j \in \{1, \dots, n\} \label{eq:two-stage-monotonicity}\\
&F_i \in \mathcal{R} &\forall i \in \{1, \dots, n\}  \label{eq:two-stage-last}
\end{align}
In the Noisy Input Model, replace the objective with $\sum_{i \in \mathcal{S}^+} \left(1 - F_i \right) + \sum_{i \in \mathcal{S}^-} F_i$.
\State Return the function $\hat{f}_n(x) = \min\{y \in \{0,1\} : X_i \preceq x \implies y \geq F_i\}$.
\end{algorithmic}
\end{breakablealgorithm}
\begin{lemma}\label{lemma:integer-solution}
Under the Noisy Input Model, replacing the constraints $F_i \in \{0,1\}$ with $F_i \in [0,1]$ in Problems \eqref{eq:simultaneous-first}-\eqref{eq:simultaneous-last} and \eqref{eq:two-stage-first}-\eqref{eq:two-stage-last} does not change the optimal value. Furthermore, there always exists an integer optimal solution.
\end{lemma}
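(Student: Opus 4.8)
The plan is to exploit two facts: in the Noisy Input Model both objectives are \emph{linear} in $F$ (the $\ell_1$-type objective $\sum_{i\in\mathcal S^+}(1-F_i)+\sum_{i\in\mathcal S^-}F_i$, which agrees with $\sum_i(Y_i-F_i)^2$ on integer $F$), and for any fixed choice of the binary variables $v$ the feasible region in the variables $F$ is an integral polytope. I will give a direct rounding argument rather than invoking total unimodularity, since it is elementary and self-contained.

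First, reduce to a statement about the $F$ variables alone. In Problem \eqref{eq:two-stage-first}-\eqref{eq:two-stage-last} the $v_k$ are already fixed to $0/1$ values by the support recovery step. In Problem \eqref{eq:simultaneous-first}-\eqref{eq:simultaneous-last}, fix $v$ to an optimal binary solution $v^\star$ of the relaxed problem; one exists since, for each integer $v$ with $\sum_k v_k=s$, the inner problem over $F$ is an LP over a nonempty compact polytope, and there are finitely many such $v$. In either case we must minimize the affine function $L(F):=\sum_{i\in\mathcal S^+}(1-F_i)+\sum_{i\in\mathcal S^-}F_i$ subject to $F_i-F_j\le c_{ij}$ for all $i,j$ and $0\le F_i\le 1$, where $c_{ij}:=\sum_k q(i,j,k)v_k\in\mathbb Z_{\ge 0}$. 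The integer points of this region are exactly the feasible points of the original ($F_i\in\{0,1\}$) problem at the same $v$, and $L$ agrees there with the quadratic objective, so it suffices to show the relaxed optimum is attained at an integer point.

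Second, the rounding step. Let $F\in[0,1]^n$ be feasible and, for $\theta\in(0,1)$, set $F^\theta_i:=\mathbbm 1[F_i\ge\theta]$. I claim $F^\theta$ is feasible for every $\theta$: the box constraints are immediate; for the difference constraints, if $c_{ij}\ge 1$ then $F^\theta_i-F^\theta_j\le 1\le c_{ij}$ automatically, while if $c_{ij}=0$ then feasibility of $F$ forces $F_i\le F_j$, hence $\mathbbm 1[F_i\ge\theta]\le\mathbbm 1[F_j\ge\theta]$, i.e.\ $F^\theta_i-F^\theta_j\le 0=c_{ij}$. Verifying this $c_{ij}=0$ case—the only place a monotonicity constraint could be violated by thresholding—is the crux of the argument. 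Now take $\theta$ uniform on $(0,1)$: $\mathbb E_\theta[F^\theta_i]=\mathbb P(\theta\le F_i)=F_i$, and since $L$ is affine, $\mathbb E_\theta[L(F^\theta)]=L(F)$, so some $\theta^\star$ gives $L(F^{\theta^\star})\le L(F)$ with $F^{\theta^\star}\in\{0,1\}^n$ feasible. Applying this to an optimal $F$ of the relaxed problem yields an integer feasible solution of no greater cost; together with $v^\star$ (for IPIR) this is an integer optimal solution, and its value equals that of the original integer problem.

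I expect no serious obstacle beyond the $c_{ij}=0$ verification and the bookkeeping needed to keep IPIR's binary $v$ fixed while only the $F$-relaxation is in play. (As an alternative to the rounding argument, one may note that the matrix of the system $F_i-F_j\le c_{ij}$, $0\le F_i\le 1$ is a digraph incidence matrix augmented with single-entry box rows, hence totally unimodular, and the right-hand side is integral, so the polytope has only integral vertices; minimizing the linear objective then attains an integer optimum directly.)
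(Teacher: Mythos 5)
Your proof is correct but takes a genuinely different route from the paper's. Both arguments first reduce to the $F$-subproblem with $v$ fixed and swap the quadratic objective for the linear one that agrees with it on $\{0,1\}^n$; both also rest on the same structural observation that the only binding difference constraints are those with $c_{ij}=\sum_k q(i,j,k)v_k=0$, which reduce to $F_i\le F_j$. From there the paper runs a local perturbation argument: it shifts every fractional coordinate simultaneously by $\pm\epsilon$ (with $\epsilon$ the margin to the nearest endpoint), shows both shifts are feasible, notes that the two objective changes negate each other and hence both vanish by optimality, selects the shift that pushes at least one fractional coordinate to an endpoint, and inducts until no fractional coordinates remain. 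Your argument replaces this induction with a single randomized threshold rounding $F_i^{\theta}=\mathbbm{1}[F_i\ge\theta]$ with $\theta$ uniform on $(0,1)$: feasibility of $F^{\theta}$ is precisely the $c_{ij}=0$ verification you flag as the crux, and linearity of expectation produces an integer feasible point of no greater cost in one step. The total-unimodularity alternative you mention would be even shorter, at the cost of being less self-contained. Both proofs are sound; yours is arguably cleaner since it avoids the two-sided perturbation bookkeeping and the recursion.
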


\begin{proof}
Consider Problem \eqref{eq:two-stage-first}-\eqref{eq:two-stage-last}, with the objective function replaced by $\sum_{i \in \mathcal{S}^+} \left(1 - F_i \right) + \sum_{i \in \mathcal{S}^-} F_i$. Here, the vector $v$ is fixed. Since $F_i \in [0,1]$ for all $i \in \{1, \dots, n\}$, $F_i - F_j \in [-1,1]$. The left side of Constraint \eqref{eq:two-stage-monotonicity} takes value in $\{0, 1, \dots, d\}$. Therefore, the constraint is tight only when the left side is equal to $0$. Therefore, we only require that for $(i,j)$ such that $\sum_{k=1}^d q_{i,j,k} v_k = 0$, it holds that $F_i \leq F_j$. Let $\epsilon = \min \left \{ \min_{F_i > 0} \{F_i\},  \min_{F_i < 1} \{1- F_i\}\right\}$. In other words, $\epsilon$ is the margin to the endpoints $[0,1]$. Suppose that $F$ is an optimal solution with some values $F_i \in (0,1)$. Then $\epsilon > 0$. Let $C = \{i : F_i \in (0,1)$. Consider adding $\epsilon$ to each $F_i$ such that $i \in C$, and call the new solution $F^{+ \epsilon}$. Clearly, $F^{+ \epsilon}$ is feasible. The change in the objective is equal to 
\begin{align*}
&\sum_{i \in \mathcal{S}^+} \left(1 - F^{+\epsilon}_i \right) + \sum_{i \in \mathcal{S}^-} F^{+\epsilon}_i  -\sum_{i \in \mathcal{S}^+} \left(1 - F_i \right) - \sum_{i \in \mathcal{S}^-} F_i  \\
&= \epsilon \left( \left|\left\{i : i \in C, i \in \mathcal{S}^-\right\} \right| -  \left|\left\{i : i \in C, i \in \mathcal{S}^+\right\} \right| \right)
\end{align*}
On the other hand, consider subtracting $\epsilon$ from each $F_i$ such that $i \in C$, and call the new solution $F^{- \epsilon}$. By construction, $F^{- \epsilon}$ is also feasible. The chance in the objective is equal to $\epsilon \left( \left|\left\{i : i \in C, i \in \mathcal{S}^+\right\} \right| -  \left|\left\{i : i \in C, i \in \mathcal{S}^-\right\} \right| \right)$. Since we have assumed that $F$ is an optimal solution, both changes must be nonnegative. Since they are negations of each other, they must both be equal to zero. Therefore, the solutions $F^{+ \epsilon}$ and $F^{- \epsilon}$ have the same objective value as the solution $F$. If $\epsilon = \min_{F_i > 0} \{F_i\}$, choose $F^{-\epsilon}$, and if $\epsilon = \min_{F_i < 1} \{1- F_i\}$, choose $F^{+\epsilon}$. This leads to the size of the set $C$ decreasing by one (or two). Repeating this process inductively, we eventually produce a solution with $C = \emptyset$. Therefore, we have shown that there always exists an integer optimal solution.

We have shown that for fixed $v$, there always exists an integer optimal solution. Varying $v$, it also holds that the optimal solutions to Problem \eqref{eq:simultaneous-first}-\eqref{eq:simultaneous-last} remains the same. 
\end{proof}

\clearpage
\section{Proofs for the Noisy Output Model}

We will build toward a proof of Theorem \ref{thm:statistical-consistency}. We note that Algorithm \ref{alg:simultaneous} selects an $s$-sparse coordinate-wise monotone function that minimizes the empirical $L_2$ loss. To prove the statistical consistency of the estimated function, we need to introduce the expected VC dimension (\cite{Vapnik1996}). 
Let $\mathcal{F}_{s,d}$ be the set of $s$-sparse coordinate-wise monotone functions on $[0,1]^d$. Following \cite{Gamarnik1999}, let $Q(x,y,f) = \left(y - f(x)\right)^2$ for $x \in [0,1]^d$, $y \in \mathbb{R}$, and $f \in \mathcal{F}_{s,d}$. 
For a fixed sequence $(x_1, y_1), \dots, (x_n, y_n) \in [0,1]^d \times [0,1]$, consider the set of vectors $\mathbf{Q} = \{\left(Q(x_1, y_1, f), \dots, Q(x_n, y_n, f)\right), f \in \mathcal{F}_{s,d}\}$. In other words, we vary over $\mathcal{F}_{s,d}$ and produce the associated error vectors. Let $N\left(\epsilon, \mathcal{F}_{s,d}, (x_1, y_1), \dots, (x_n, y_n) \right)$ be the size of the minimal $\epsilon$-net of the set $\mathbf{Q}$. Namely, a set $E$ is an $\epsilon$-net for $\mathbf{Q}$ if for every $q \in \mathbf{Q}$ there exists $v \in E$ such that $\Vert q - v \Vert_{\infty} \leq \epsilon$. For any $\epsilon > 0$, the \emph{expected VC entropy} of $\mathcal{F}_s$ is defined as
$$N_{\mathcal{F}_{s,d}} (\epsilon, n) = \mathbb{E} \left[N\left(\epsilon, \mathcal{F}_{s,d}, (X_1, Y_1), \dots, (X_n, Y_n) \right) \right].$$ The expectation is over the random variables $(X_i, Y_i)$. The expected VC entropy measures the complexity of the class $\mathcal{F}_{s,d}$,
and can be used to prove convergence in $L_2$.
\begin{proposition}[From Proposition 2 in \cite{Gamarnik1999}]\label{prop:statistical-consistency}
If $Y = f(X) + W \in [0,1]$ almost surely, then
\[\mathbb{P}\left( \Vert \hat{f}_n - f \Vert_2 > \epsilon \right) \leq 6 N_{\mathcal{F}_{s,d}} \left( \frac{\epsilon^2}{2^{10}}, n \right) e^{-\frac{3 \epsilon^3 n}{41 \times 2^{10}}}.\]
\end{proposition}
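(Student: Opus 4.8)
The plan is to observe that this statement is, essentially, Proposition~2 of \cite{Gamarnik1999}, and that its proof is a standard empirical-process argument that uses \emph{nothing} about the class of coordinate-wise monotone functions on $[0,1]^2$ beyond three facts: (i) $Y$ and every function in the class are uniformly bounded (here $Y \in [0,1]$ a.s.\ and all functions take values in $\mathcal{R}\subseteq[0,1]$); (ii) $\hat{f}_n$ minimizes the empirical $L_2$ loss $L_n(g):=\frac1n\sum_{i=1}^n\bigl(Y_i-g(X_i)\bigr)^2$ over a class $\mathcal{F}_{s,d}$ that contains the true $f$ (which holds because Algorithm~\ref{alg:simultaneous} returns precisely such a minimizer and $f$ is $s$-sparse coordinate-wise monotone); and (iii) the complexity of $\mathcal{F}_{s,d}$ enters only through the expected VC entropy $N_{\mathcal{F}_{s,d}}$. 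So I would simply transcribe the argument of \cite{Gamarnik1999} with $\mathcal{F}_2$ replaced by $\mathcal{F}_{s,d}$.

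First I would record the deterministic reduction. Writing $L(g):=\mathbb{E}\bigl[(Y-g(X))^2\bigr]$, independence of $W$ from $X$ together with $\mathbb{E}[W]=0$ gives, for every $g\in\mathcal{F}_{s,d}$,
\[
L(g)-L(f)=\mathbb{E}\bigl[(f(X)-g(X))^2\bigr]=\Vert g-f\Vert_2^2 .
\]
Since $\hat{f}_n$ minimizes $L_n$ over $\mathcal{F}_{s,d}$ and $f\in\mathcal{F}_{s,d}$, on the event $\Vert\hat{f}_n-f\Vert_2>\epsilon$ there must exist $g\in\mathcal{F}_{s,d}$ with $L(g)-L(f)>\epsilon^2$ yet $L_n(g)\le L_n(f)$. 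Hence it suffices to bound $\mathbb{P}\bigl(\exists\, g\in\mathcal{F}_{s,d}: L(g)-L(f)>\epsilon^2 \text{ and } L_n(g)\le L_n(f)\bigr)$.

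The core step is a variance-adapted uniform deviation bound for the increments $\xi_i(g):=Q(X_i,Y_i,g)-Q(X_i,Y_i,f)$, which satisfy $\mathbb{E}[\xi_i(g)]=L(g)-L(f)$ and the two a.s.\ bounds
\[
|\xi_i(g)|=|f(X_i)-g(X_i)|\cdot|2Y_i-f(X_i)-g(X_i)|\le 2,\qquad \mathbb{E}[\xi_i(g)^2]\le 4\,\mathbb{E}\bigl[(f(X)-g(X))^2\bigr]=4\bigl(L(g)-L(f)\bigr).
\]
Following \cite{Gamarnik1999} (and the machinery of \cite{Vapnik1996}): (a) symmetrize with a ghost sample and condition on the realized inputs, reducing the supremum over the infinite class to a maximum over a minimal $\ell_\infty$-net of the error vectors $\{(Q(X_i,Y_i,g))_i\}$ --- this is where the factor $N_{\mathcal{F}_{s,d}}\bigl(\epsilon^2/2^{10},n\bigr)$ enters, and where one checks that the net resolution $\epsilon^2/2^{10}$ perturbs both $L_n(g)-L_n(f)$ and $L(g)-L(f)$ by only a small fraction of $\epsilon^2$; (b) for each net element apply a Bernstein/Bennett inequality with the data-dependent variance proxy $4(L(g)-L(f))$; (c) union over the net. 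Choosing the deviation threshold so that, off the bad event, $L_n(g)\le L_n(f)$ forces $L(g)-L(f)\le\epsilon^2$ (a contradiction), and tracking constants, yields the prefactor $6$, the net resolution $\epsilon^2/2^{10}$, and the exponent $3\epsilon^3 n/(41\times 2^{10})$. The cubic power $\epsilon^3$ --- intermediate between the $\epsilon^2$ a single-$g$ Bernstein bound gives and the $\epsilon^4$ from a naive two-sided sup-deviation via Hoeffding --- is exactly the payoff of step~(b)'s variance adaptivity combined with the localization to $\{L(g)-L(f)>\epsilon^2\}$.

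The hard part will be step (b)--(c): setting up the relative (localized) deviation inequality so that the variance proxy $4(L(g)-L(f))$ can be used uniformly over the net, and verifying that the $\ell_\infty$-net of sampled error vectors at scale $\epsilon^2/2^{10}$ is fine enough to transfer both the empirical inequality $L_n(g)\le L_n(f)$ and the population constraint $L(g)-L(f)>\epsilon^2$ to net elements with acceptable slack; this is where the exact constants $6$, $41$, $2^{10}$ come from, and it requires careful bookkeeping rather than a new idea. Everything else --- the identity $L(g)-L(f)=\Vert g-f\Vert_2^2$, the two bounds on $\xi_i(g)$, and the final algebra (after which Proposition~\ref{prop:statistical-consistency} feeds directly into Theorem~\ref{thm:statistical-consistency}) --- is routine, and the only genuinely new (and immediate) observation is that the whole argument is insensitive to whether the underlying class is $\mathcal{F}_2$ or $\mathcal{F}_{s,d}$.
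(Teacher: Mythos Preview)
Your proposal is correct and matches the paper's approach: the paper does not prove this proposition at all but simply cites it as ``From Proposition~2 in \cite{Gamarnik1999},'' relying on exactly the observation you make---that the argument there uses nothing specific to $\mathcal{F}_2$ and transfers verbatim to $\mathcal{F}_{s,d}$. Your sketch of the underlying empirical-process argument (symmetrization, $\ell_\infty$-net reduction, Bernstein with variance proxy $4(L(g)-L(f))$) is accurate and in fact goes beyond what the paper provides.
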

Therefore, if the expected VC entropy of $\mathcal{F}_{s,d}$ grows subexponentially in $n$, the estimator $\hat{f}_n$ derived from Algorithm \ref{alg:simultaneous} converges to the true function in $L_2$. Define the non-sparse class $\mathcal{F}_d = \mathcal{F}_{d,d}$. 
\begin{proposition}
$N_{\mathcal{F}_{s,d}}(\epsilon, n) \leq \binom{d}{s} N_{\mathcal{F}_{s}}(\epsilon, n)$.
\end{proposition}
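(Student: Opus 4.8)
The plan is to split the sparse class $\mathcal{F}_{s,d}$ according to which set of coordinates is active, observe that each piece is a relabelled copy of the non-sparse class $\mathcal{F}_s$, and then bound the covering number of a union by the sum of covering numbers of its pieces, before finally taking expectations. I would carry this out in three steps.

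First, for each $A \subseteq [d]$ with $|A| = s$ let $\mathcal{F}_{s,d}^A \subseteq \mathcal{F}_{s,d}$ be the set of functions admitting $A$ as a set of active coordinates, so that $\mathcal{F}_{s,d} = \bigcup_{|A| = s} \mathcal{F}_{s,d}^A$. If $f \in \mathcal{F}_{s,d}^A$ and $x, y$ agree on all coordinates in $A$, then $x \preceq_A y$ and $y \preceq_A x$ both hold, forcing $f(x) = f(y)$; hence $f(x) = g_f(x_A)$ for a well-defined $g_f : [0,1]^A \to \mathcal{R}$, and the hypothesis $x \preceq_A y \implies f(x) \le f(y)$ says exactly that $g_f$ is coordinate-wise monotone on $[0,1]^A$. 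Identifying $[0,1]^A$ with $[0,1]^s$, this means $g_f \in \mathcal{F}_s$, and conversely every $g \in \mathcal{F}_s$ arises as some $g_f$; so $f \mapsto g_f$ is a bijection $\mathcal{F}_{s,d}^A \to \mathcal{F}_s$.

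Second, fix a data sequence $(x_1,y_1),\dots,(x_n,y_n)$. For $f \in \mathcal{F}_{s,d}^A$ we have $Q(x_i, y_i, f) = (y_i - f(x_i))^2 = (y_i - g_f(x_{i,A}))^2 = Q(x_{i,A}, y_i, g_f)$, so the set of error vectors generated by $\mathcal{F}_{s,d}^A$ on this data coincides with the set of error vectors generated by $\mathcal{F}_s$ on the projected data $(x_{1,A}, y_1), \dots, (x_{n,A}, y_n)$. Since a union of $\epsilon$-nets of the pieces is an $\epsilon$-net of the union, this gives, pointwise in the data,
\[N\left(\epsilon, \mathcal{F}_{s,d}, (x_i, y_i)_i\right) \le \sum_{|A| = s} N\left(\epsilon, \mathcal{F}_s, (x_{i,A}, y_i)_i\right).\]
Finally, I take expectations over the samples: since $X_i$ is uniform on $[0,1]^d$, the projection $X_{i,A}$ is uniform on $[0,1]^s$, and $Y_i = f(X_i) + W_i = g_f(X_{i,A}) + W_i$, so $(X_{i,A}, Y_i)_i$ has exactly the sample distribution used to define $N_{\mathcal{F}_s}(\epsilon, n)$ (with reduced true function $g_f \in \mathcal{F}_s$). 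Thus each of the $\binom{d}{s}$ expected terms equals $N_{\mathcal{F}_s}(\epsilon, n)$, and the bound follows.

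The argument is essentially bookkeeping; the only points requiring care are the identification in the first step — verifying that admitting $A$ as an active set forces genuine dependence on $x_A$ alone and preserves monotonicity in every coordinate of $A$ — and confirming in the last step that projecting the sample distribution onto $A$ really reproduces the distribution implicit in the definition of $N_{\mathcal{F}_s}$ (which works precisely because the true function is itself $s$-sparse with active set $A$, so its restriction is a legitimate dimension-$s$ target). There is no analytic difficulty.
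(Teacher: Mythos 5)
Your approach is the same as the paper's, which compresses the whole argument into the single observation that $\mathcal{F}_{s,d}$ is a union of $\binom{d}{s}$ copies of $\mathcal{F}_s$ indexed by the active set; you have correctly filled in the union-of-nets bound and the identification of each $\mathcal{F}_{s,d}^A$ with $\mathcal{F}_s$ via projection. One small imprecision in your final step: you assert that for \emph{every} $A$ with $|A|=s$, the projected sample $(X_{i,A}, Y_i)_i$ has exactly the sample distribution defining $N_{\mathcal{F}_s}(\epsilon,n)$ because $Y_i = g_f(X_{i,A}) + W_i$; that identity holds only when $A$ is the true active set, since for other $A$ the response $Y_i = f(X_i)+W_i$ is not a function of $X_{i,A}$ plus independent noise. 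This does not break the argument in context, because the downstream bound (Proposition \ref{prop:labeling-number}) controls the covering number by the labeling number $L(\cdot, x_1,\dots,x_n)$, which depends only on the $x_i$'s, and $X_{i,A}$ is uniform on $[0,1]^s$ for every fixed $A$ — but the clean statement is ``each expected term is bounded by the same quantity'' rather than ``equals $N_{\mathcal{F}_s}(\epsilon,n)$,'' mirroring the fact that $N_{\mathcal{F}_s}(\epsilon,n)$ itself is only loosely specified in the paper.
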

\begin{proof}
The set $\mathcal{F}_{s,d}$ can be written as a union of $\binom{d}{s}$ function classes, depending on which subset of the coordinates is active. 
\end{proof}
Our goal is now to bound the expected VC entropy of the class $\mathcal{F}_d$. The expected VC entropy is related to a combinatorial quantity known as the \emph{labeling number}.
\begin{definition}[Labeling Number (\cite{Gamarnik1999})]
For a sequence of points $x_1, \dots, x_n \in [0,1]^d$ and a positive integer $m$, the labeling number $L(m, x_1, \dots, x_n)$ is the number of functions $\phi: \{x_1, \dots, x_n \} \to \{1, 2, \dots, m\}$ such that $\phi(X_i) \leq \phi(X_j)$ whenever $x_i \preceq x_j$, for $i, j \in \{1, \dots, n\}$. 
\end{definition}

\begin{proposition}\label{prop:labeling-number}
For any $(x_1, y_1), \dots, (x_n, y_n) \in [0,1]^d \times [0,1]$,
\[N\left(\epsilon, \mathcal{F}_d, (x_1, y_1), \dots, (x_n, y_n) \right) \leq L\left(\left \lceil \frac{2}{\epsilon} \right \rceil , x_1, \dots, x_n \right).\]
Let $\overline{\mathcal{F}}_d$ be the set of coordinate-wise monotone functions $f : [0,1]^d \to \{0, 1\}$. Then
\[N\left(\epsilon, \overline{\mathcal{F}}_d, (x_1, y_1), \dots, (x_n, y_n) \right) \geq L\left(\left \lfloor \sqrt{\frac{3}{2\epsilon}} \right \rfloor - 3 , x_1, \dots, x_n \right).\]
\end{proposition}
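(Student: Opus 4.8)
The plan is to prove the two inequalities separately, in both cases by quantization but in opposite directions: the upper bound by rounding an arbitrary monotone function \emph{up} onto a coarse grid of function values, and the lower bound by restricting to monotone functions valued on a \emph{fine} grid and checking that their error vectors are spread apart.

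For the upper bound, fix $m=\lceil 2/\epsilon\rceil$. Given any $f\in\mathcal{F}_d$, define a labeling $\phi_f\colon\{x_1,\dots,x_n\}\to\{1,\dots,m\}$ by $\phi_f(x_i)=\max\{1,\lceil m\,f(x_i)\rceil\}$. Since $a\mapsto\lceil ma\rceil$ is nondecreasing and $f$ is coordinate-wise monotone, $\phi_f$ respects the partial order, so it is one of the $L(m,x_1,\dots,x_n)$ labelings. Writing $g_\phi$ for the function with $g_\phi(x_i)=\phi(x_i)/m$, one has $|f(x_i)-g_{\phi_f}(x_i)|\le 1/m$ for every $i$, and since $y_i,f(x_i),g_{\phi_f}(x_i)\in[0,1]$ the identity $u^2-v^2=(u-v)(u+v)$ gives
\[
\bigl|(y_i-f(x_i))^2-(y_i-g_{\phi_f}(x_i))^2\bigr|=|f(x_i)-g_{\phi_f}(x_i)|\cdot\bigl|2y_i-f(x_i)-g_{\phi_f}(x_i)\bigr|\le\tfrac{2}{m}\le\epsilon.
\]
Hence the collection of vectors $\{\,((y_i-g_\phi(x_i))^2)_{i=1}^n:\phi\text{ a monotone labeling into }\{1,\dots,m\}\,\}$ is an $\epsilon$-net of $\mathbf{Q}$ in $\|\cdot\|_\infty$, and it has at most $L(m,x_1,\dots,x_n)$ members (coincidences only shrink it), which is the first inequality; this half holds for every admissible sequence $(x_i,y_i)$.

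For the lower bound I would instead exhibit a large \emph{packing}: a family of points of $\mathbf{Q}$ pairwise more than $2\epsilon$ apart in $\|\cdot\|_\infty$. A single point within $\|\cdot\|_\infty$-distance $\epsilon$ of two such points would force them, by the triangle inequality, to be at distance at most $2\epsilon$, so every $\epsilon$-net must contain a distinct element for each point of the packing, whence $N$ is at least its size. Put $m'=\lfloor\sqrt{3/(2\epsilon)}\rfloor-3$; if $m'\le 1$ the claim is trivial since $L(m',x_1,\dots,x_n)\le 1\le N$. To each monotone labeling $\phi\colon\{x_1,\dots,x_n\}\to\{1,\dots,m'\}$ associate the coordinate-wise monotone function obtained by rescaling $\phi$ onto the grid $\{0,\tfrac1{m'-1},\dots,1\}$; there are $L(m',x_1,\dots,x_n)$ of these. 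Taking the data values $y_i$ at the boundary (cleanest: $y_i=0$ for all $i$ — note the bound is vacuous unless the $y_i$ are chosen to make the error vectors distinguishable), any two distinct labelings disagree on some coordinate $i$, where the rescaled functions take distinct grid values $a\neq b$, and there the error-vector coordinates differ by $|a^2-b^2|\ge 1/(m'-1)^2$; the choice of $m'$ is precisely what forces $1/(m'-1)^2>2\epsilon$. So these $L(m',x_1,\dots,x_n)$ error vectors form a valid packing, giving the second inequality.

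The rounding half is routine once the identity $u^2-v^2=(u-v)(u+v)$ is in hand. The real difficulty is the packing half, and it is entirely in the numerology behind the constant $\sqrt{3/(2\epsilon)}-3$: one must fix the grid resolution together with the $y_i$ so that the induced separation of error-vector coordinates is \emph{strictly} larger than $2\epsilon$, avoid the degeneracy $(y_i-a)^2=(y_i-b)^2$ that occurs when $y_i$ is the midpoint of two grid values (which is what pins $y_i$ to the boundary), and absorb the floor operations and edge cases (such as the constant labeling contributing $1$) into the additive $-3$. Recovering the factor $\sqrt3$ rather than the naive $\sqrt{1/(2\epsilon)}$ presumably needs a slightly sharper choice of grid or of the $y_i$ across coordinates; all the quantitative content of the proposition lives in that estimate.
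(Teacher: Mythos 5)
Your upper-bound argument is essentially the paper's: both round $f$ onto a grid of $\lceil 2/\epsilon\rceil$ evenly spaced values (the paper rounds down to $q_{k_i}$, you round up via $\lceil m f(x_i)\rceil$), observe the rounding preserves the partial order, and use $u^2-v^2=(u-v)(u+v)$ with $|u-v|\le 1/m$ and $|u+v-2y_i|\le 2$ to get an $\epsilon$-net of size at most $L(\lceil 2/\epsilon\rceil,x_1,\dots,x_n)$. That half is fine.

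The lower bound has a genuine quantitative gap, and you flag it yourself but do not close it. With $m'=\lfloor\sqrt{3/(2\epsilon)}\rfloor-3$, the grid $\{0,\tfrac1{m'-1},\dots,1\}$, and $y_i=0$, the worst-case pair of distinct grid values is $a=0$, $b=\tfrac1{m'-1}$, which gives $|a^2-b^2|=\tfrac1{(m'-1)^2}\approx\tfrac{2\epsilon}{3}$ as $\epsilon\to 0$ — strictly \emph{less} than the $2\epsilon$ separation the packing argument needs, so the claimed implication ``the choice of $m'$ forces $1/(m'-1)^2>2\epsilon$'' is false for small $\epsilon$. The factor of $3$ you suspect you are missing is exactly the paper's device: they set $\delta=\sqrt{2\epsilon/3}$ and place the grid at $q_i=\delta(i+1)$, i.e.\ starting at $q_1=2\delta$ and ending at $q_N\le 1-2\delta$, so the grid is bounded away from both endpoints. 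Then $|l_1(x_k)-l_2(x_k)|\ge\delta$ and, for $y_k$ at the boundary, $|l_1(x_k)+l_2(x_k)-2y_k|\ge 2\min\{q_1,1-q_N\}\ge 4\delta$, giving a product $\ge 4\delta^2=\tfrac{8\epsilon}{3}>2\epsilon$. The point is that anchoring the grid at $0$ makes the cross-term $|a+b-2y_k|$ collapse to a single grid step, whereas offsetting by $2\delta$ makes it scale with $\delta$ as well; your approach could be repaired by, e.g., using the grid $\{1/m',2/m',\dots,1\}$ (which makes $a+b\ge 3/m'$), but as written the estimate fails. Finally, note that both you and the paper are implicitly choosing boundary $y_i$'s — the proposition as stated quantifies over all $y_i\in[0,1]$, yet for $y_i\equiv 1/2$ every $f\in\overline{\mathcal F}_d$ gives the same error vector $(1/4,\dots,1/4)$ and the $\epsilon$-net has size one, so some such restriction on $y$ (or an interpretation of $N$ as a worst case) is indeed necessary.
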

\begin{proof}
For the lower bound, let $\delta = \sqrt{\frac{2\epsilon}{3}}$, and let $N = \left \lfloor \frac{1}{\delta} \right \rfloor - 3$. Define the sequence $q_i = \delta (i + 1)$, for $i \in \{1, \dots, N\}$. The monotone labelings supported on $\{q_1, \dots, q_N\}$ are a subset of the coordinate-wise monotone functions. Our goal is to show that for every two distinct labelings $l_1$ and $l_2$, it holds that 
\[\Vert \left(Q(x_1, y_1, l_1), \dots, Q(x_n, y_n, l_1) \right), \left(Q(x_1, y_1, l_2), \dots, Q(x_n, y_n, l_2) \right)\Vert_{\infty} > 2 \epsilon.\]
If this relation holds for all distinct pairs of labelings, then at least $L(N, x_1, x_2, \dots, x_n)$ points are required to form an $\epsilon$-net of the set $\mathbf{Q}$. 

If $l_1$ and $l_2$ are distinct labelings, then there exists $k \in \{1, \dots, n\}$ such that $l_1(x_k) \neq l_2(x_k)$. Therefore,
\begin{align*}
\left| Q(x_k, y_k, l_1) - Q(x_k, y_k, l_2)\right| &= \left| (l_1(x_k) - y_k)^2 - (l_2(x_k) - y_k)^2 \right|\\
&= \left| l_1(x_k)^2 - 2 y_k l_1(x_k) + 2y_k l_2(x_k) - l_2(x_k)^2 \right|\\
&= \left| 2y_k \left( l_1(x_k) - l_2(x_k)\right) + \left( l_1(x_k) - l_2(x_k)\right) \left(l_1(x_k) + l_2(x_k)\right) \right|\\
&= \left| l_1(x_k) - l_2(x_k) \right| \left| l_1(x_k) + l_2(x_k) - 2y_k \right|\\
&\geq \delta \cdot 2  \left| \frac{l_1(x_k) + l_2(x_k)}{2} - y_k \right|\\
&\geq \delta \cdot 2 \min \left\{q_1, 1 - q_N \right\}\\
&\geq 4\delta^2\\
&= \frac{8}{3}\epsilon\\
&> 2 \epsilon
\end{align*}
We conclude that $L(N, \overline{\mathcal{F}}_d, x_1, \dots, x_n) \leq N\left(\epsilon, \overline{\mathcal{F}}_d, (x_1, y_1), \dots, (x_n, y_n) \right)$.

For the upper bound, the proof comes from the proof of Proposition 3 in \cite{Gamarnik1999}. Let $N = \left \lceil \frac{2}{\epsilon} \right \rceil$. Let $q_i = \frac{i-1}{N}$ for $i \in \{1, \dots, N, N+1\}$. Define
\[G \triangleq \{\left((y_1 - g_1)^2, (y_2 - g_2)^2, \dots, (y_n - g_n)^2 \right) : g_i \in \{q_1, \dots, q_N\}, x_i \preceq x_j \implies g_i \leq g_j \}. \]
Then $|G| \leq L(N, x_1, \dots, x_n)$. We now show that $G$ is an $\epsilon$-net of $\mathbf{Q} = \{\left(Q(x_1, y_1, f), \dots, Q(x_n, y_n, f)\right), f \in \mathcal{F}\}$. For each sample $i \in \{1, \dots, n\}$, find $k_i$ such that $f(x_i) \in [q_{k_i}, q_{k_i + 1})$. Set $g_i = q_{k_i}$. Now,
\begin{align*}
\left| (y_i - f(x_i))^2 - (y_i - q_{k_i})^2 \right| &= \left| y_i^2 - 2 y_i f(x_i) + f(x_i)^2 - y_1^2 + 2y_i q_{k_i} - q_{k_i}^2 \right|\\
&= \left| f(x_i)^2 + 2y_i\left( q_{k_i} - f(x_i)\right) - q_{k_i}^2\right|\\
&= \left| (f(x_i) - q_{k_i}) \left( f(x_i) + q_{k_i}\right) - 2y_i \left( f(x_i) - q_{k_i}\right) \right|\\
&= \left( f(x_i) - q_{k_i}\right) \left| f(x_i) + q_{k_i} - 2y_i \right|\\
&\leq \frac{2}{N}\\
&= \frac{2}{\left \lceil \frac{2}{\epsilon} \right \rceil}\\
&\leq \epsilon
\end{align*}
It remains to show that $x_i \preceq x_j \implies g_i \leq g_j$. Since $f$ is coordinate-wise monotone, $x _i \preceq x_j \implies f(x_i) \leq f(x_j)$. Then also $g_i \leq g_j$. Therefore, we have shown that $G$ is a valid $\epsilon$-net, and we conclude that the size of the smallest $\epsilon$-net is at most $L(N, x_1, \dots, x_n)$.
\end{proof}

The $m$-labeling number is in turn related to the binary labeling number. 
\begin{proposition}\label{prop:binary-labeling}\cite{Gamarnik1999}
It holds that 
$L(m, x_1, \dots, x_n) \leq \left( L(2, x_1, \dots, x_n)\right)^{m-1}.$
\end{proposition}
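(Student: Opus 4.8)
The plan is to prove the bound by a \emph{threshold decomposition} of an $m$-labeling into $m-1$ binary labelings. Given a monotone labeling $\phi : \{x_1, \dots, x_n\} \to \{1, \dots, m\}$, for each threshold $t \in \{1, \dots, m-1\}$ define $\phi_t : \{x_1, \dots, x_n\} \to \{1, 2\}$ by setting $\phi_t(x_i) = 2$ if $\phi(x_i) > t$ and $\phi_t(x_i) = 1$ otherwise.

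The first step is to check that each $\phi_t$ is itself a monotone labeling into $\{1,2\}$, hence is one of the labelings counted by $L(2, x_1, \dots, x_n)$. This is immediate from the fact that the codomain $\{1,\dots,m\}$ is totally ordered: if $x_i \preceq x_j$ then $\phi(x_i) \leq \phi(x_j)$, so $\phi(x_i) > t$ forces $\phi(x_j) > t$, and therefore $\phi_t(x_i) \leq \phi_t(x_j)$.

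The second step is to show that the map $\phi \mapsto (\phi_1, \dots, \phi_{m-1})$ is injective. This follows because $\phi$ can be reconstructed from the tuple via $\phi(x_i) = 1 + |\{t \in \{1, \dots, m-1\} : \phi_t(x_i) = 2\}|$, since the number of thresholds $t \in \{1,\dots,m-1\}$ lying strictly below $\phi(x_i)$ is exactly $\phi(x_i) - 1$. Consequently, the number of monotone $m$-labelings is at most the number of $(m-1)$-tuples of monotone binary labelings, which is $\left(L(2, x_1, \dots, x_n)\right)^{m-1}$, giving the claim.

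I expect essentially no serious obstacle here; the only point requiring care is the off-by-one bookkeeping between the label set $\{1, \dots, m\}$ and the $m-1$ threshold functions, together with the (immediate) observation that coordinate-wise thresholding of a monotone map into a totally ordered set stays monotone.
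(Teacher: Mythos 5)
Your proof is correct and rests on the same key idea the paper (via Gamarnik 1999, Lemma 3) uses: decomposing an $m$-labeling into binary labelings by thresholding, and noting the decomposition is injective. The only organizational difference is that you do all $m-1$ thresholds at once, whereas the cited proof peels off one threshold per step inductively (giving $L(m+1) \leq L(2)\,L(m)$, which unrolls to the same bound); both are sound and equivalent in substance.
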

\begin{proof}
The proof can be found in the proof of Lemma 3 in \cite{Gamarnik1999}, with the correction that
\[g_2(x_i) = \begin{cases} 1 & \text{ if } g(x_i) \leq m\\ 2 & \text{ if } g(x_i) = m+ 1. \end{cases} \]
\end{proof}

Propositions \ref{prop:labeling-number} and \ref{prop:binary-labeling} suggest that the binary labeling number is a good proxy for the VC entropy. 

\begin{theorem}\label{thm:labeling-number-bounds}
Let $X_1, \dots, X_n$ be distributed uniformly and independently in $[0,1]^d$. Let $L(X_1, \dots, X_n)$ be the number of binary monotone labelings of the points $X_1, \dots, X_n$. Then \[ \exp \left[\frac{\log(2) (1- e^{-1})}{(d-1)!}n^{\frac{d-1}{d}} \right] \leq \mathbb{E}[L(X_1, \dots X_n)] \leq \exp \left[ \left(2^d + 2\log(2) -1 \right) n^{\frac{d-1}{d}}\right].\]
\end{theorem}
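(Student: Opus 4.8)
A binary monotone labeling of $X_1,\dots,X_n$ is exactly a choice of an ``upward closed'' subset of the random poset that the points induce under $\preceq$ (the set of points receiving the larger label), and mapping such a set to its $\preceq$-minimal points puts these in bijection with the antichains of the poset. So $L(X_1,\dots,X_n)$ is the number of antichains (equivalently up-sets) of the random point poset, and the theorem is a pair of lower/upper estimates on its expectation. I will assume $d\ge 2$ throughout (for $d=1$ one has $L=n+1$ deterministically).

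\textbf{Lower bound.} I would use the elementary fact that a poset whose largest antichain has size $w$ has at least $2^{w}$ antichains (the subsets of a fixed maximum antichain are all antichains), so $\mathbb{E}[L]\ge\mathbb{E}\big[2^{w(X_1,\dots,X_n)}\big]\ge 2^{\mathbb{E}[w(X_1,\dots,X_n)]}$ by convexity of $t\mapsto 2^{t}$; it then suffices to exhibit a large antichain in expectation. Partition $[0,1]^d$ into $k^{d}$ congruent subcubes with $k=\lfloor n^{1/d}\rfloor$; the subcubes whose integer index $(c_1,\dots,c_d)\in\{1,\dots,k\}^{d}$ has $c_1+\cdots+c_d=k+d-1$ are pairwise $\preceq$-incomparable (equal coordinate sum plus distinctness forces incomparability), so one point from each nonempty such subcube forms an antichain. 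There are $\binom{k+d-2}{d-1}\ge \frac{1}{(d-1)!}n^{(d-1)/d}(1-o(1))$ of these subcubes, each nonempty with probability $1-(1-k^{-d})^{n}\ge 1-e^{-n/k^{d}}\ge 1-e^{-1}$ since $k^{d}\le n$; linearity of expectation gives $\mathbb{E}[w]\ge (1-e^{-1})\binom{k+d-2}{d-1}$, which yields the stated bound after absorbing the rounding loss.

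\textbf{Upper bound.} This is the substantive direction, and the entire point is to keep the exponent at order $n^{(d-1)/d}$ with \emph{no} factor of $\log n$. Partition $[0,1]^d$ into $m^{d}$ subcubes with $m\asymp n^{1/d}$. Any monotone labeling extends (e.g.\ by the ``min'' rule) to a coordinate-wise monotone $g:[0,1]^d\to\{0,1\}$, and the labeling of the points is determined by (i) the \emph{combinatorial type} of $g$ relative to the grid --- which of the $(m+1)^{d}$ grid corners lie on the $1$-side of $g$ --- together with (ii) the labels of the points lying in the \emph{boundary subcubes} of that type (a subcube is boundary if its minimal corner is on the $0$-side and its maximal corner on the $1$-side), since in every other subcube the labels are forced. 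For (i): the number of types is at most the number of up-sets of the grid poset $\{1,\dots,m+1\}^{d}$, and slicing along the last coordinate exhibits each such up-set as a nested chain of $m+1$ up-sets of $\{1,\dots,m+1\}^{d-1}$, so by (the counting behind) Proposition~\ref{prop:binary-labeling} this count is at most $\big(\#\text{ up-sets of }\{1,\dots,m+1\}^{d-1}\big)^{m+1}$; iterating down to dimension $2$, where $\#\text{ up-sets of }\{1,\dots,m+1\}^{2}=\binom{2(m+1)}{m+1}\le 4^{\,m+1}$, bounds the number of types by $4^{\,(m+1)^{d-1}}$ --- this is the source of the $2\log 2$ in the exponent and contains no $\log n$. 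For (ii): the boundary hypersurface of $g$ is the graph of a coordinate-wise monotone function on a facet, hence ``thin'' --- its total variation is at most $d-1$ --- so it meets at most $d\,m^{d-1}\le (2^{d}-1)m^{d-1}$ subcubes; thus for a fixed type $T$ the number of boundary points is $\mathrm{Bin}(n,p_T)$ with $p_T=(\#\text{ boundary subcubes of }T)/m^{d}\le (2^{d}-1)/m$, and $\mathbb{E}\big[2^{\mathrm{Bin}(n,p_T)}\big]=(1+p_T)^{n}\le e^{\,p_T n}\le e^{\,(2^{d}-1)n/m}$.

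\textbf{Assembly, and the main obstacle.} The crucial step is not to extract a worst case over types before taking the expectation --- the largest possible number of boundary points over all $T$ does carry a $\log n$ --- but to sum: $\mathbb{E}[L]\le\sum_{T}\mathbb{E}\big[2^{\#\{\text{boundary points of }T\}}\big]\le \big(\#\text{ types}\big)\cdot e^{\,(2^{d}-1)n/m}$, which is legitimate precisely because each type prescribes a fixed, deterministic, $O(n^{(d-1)/d})$-size set of boundary subcubes, turning the per-type factor into an honest binomial moment. Combining with $\#\text{ types}\le 4^{(m+1)^{d-1}}$ and choosing $m\approx n^{1/d}$ so that $(m+1)^{d-1}$ and $n/m$ are each $(1+o(1))\,n^{(d-1)/d}$ gives $\log\mathbb{E}[L]\le\big(2^{d}+2\log 2-1+o(1)\big)n^{(d-1)/d}$; the remaining fiddly work --- the exact grid resolution, the precise count of boundary subcubes, and tightening $(1+p)^{n}\le e^{pn}$ --- is the bookkeeping needed to land the stated constant. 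I expect the genuinely delicate part to be exactly this simultaneous control: holding both the number of grid-types and the number of boundary points at the scale $n^{(d-1)/d}$ at once, which is made possible by the dimension-reducing slicing (base case $\binom{2m}{m}\le 4^{m}$) together with the ``sum, don't maximize'' argument.
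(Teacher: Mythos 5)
Your proposal is correct and follows essentially the same route as the paper's proof: the lower bound uses the same diagonal-cell antichain construction together with Jensen's inequality applied to $2^{\#\text{nonempty incomparable cells}}$, and the upper bound uses the same grid decomposition with the count factored as (number of integer-partition/up-set types) $\times$ $\mathbb{E}[2^{\#\text{boundary-cell points}}]$, matching the paper's Lemma~\ref{lemma:border-cells-upper-bound}. The only cosmetic differences are that you derive the type count by iterated slicing down to the base case $\binom{2m}{m}\le 4^{m}$ (the content of Proposition~\ref{prop:binary-labeling}) rather than citing the Moshkovitz--Tishby bound $|P([m]^d)|\le\binom{2m}{m}^{m^{d-2}}$, and you bound the number of border cells by $dm^{d-1}$ rather than the paper's $m^{d}-(m-1)^{d}$ — both land on the same exponent.
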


In order to prove the upper bound in Theorem \ref{thm:labeling-number-bounds}, we relate the binary labeling number to the number of \emph{integer partitions}.
\begin{definition}[Integer Partition]
An integer partition of dimension $(d-1)$ with values in $\{0, 1, \dots m\}$, is a collection of values $A_{i_1, i_2, \dots, i_{d-1}} \in \{0, 1, \dots, m\}$ where $i_k \in \{1, \dots m\}$ and $A_{i_1, i_2, \dots, i_{d-1}} \leq A_{j_1, j_2, \dots, j_{d-1}}$ whenever $i_k \leq j_k$ for all $k \in \{1, \dots, d-1\}$. The set of integer partitions of dimension $(d-1)$ with values in $\{0, 1, \dots m\}$ is denoted by $P([m]^d)$.
\end{definition}
Note: the definition is in terms of $(d-1)$ because when the monotone regression problem is in dimension $d$, we will consider partitions of dimension $(d-1)$. To illustrate the definition, consider setting $d=2$ (see Figure \ref{fig:integer-partition-illustration}). An integer partition of dimension $1$ is an assignment of values $(A_1, A_2, \dots, A_m)$ that is non-increasing, and each $A_k$ takes value in $\{0, 1, \dots, m\}$. A $1$-dimensional partition can be seen to divide the $m \times m$ grid in a monotonic way. Next we define the concept of a \emph{border cell}.

\begin{definition}[Border Cell]
Label the cells in the $[m]^d$ grid according to cell coordinates, namely entries $(x_{1}, x_{2}, \dots x_{d})$, where $x_k \in \{1, \dots, m\}$ for each $k \in \{1, \dots, d\}$. For a partition $p \in P([m]^d)$ with entries in $\{1, \dots, m\}$, consider its values $A_{i_1, i_2, \dots, i_{d-1}}$. The cells corresponding to the partition (which we call the partition cells) are given by $(x_1, x_2, \dots, x_{d-1}, x)$, for $x \leq A_{x_1, x_2, \dots, x_{d-1}}$ and where each $x_k$ ranges in $\{1, \dots, m\}$. We say that two cells are adjacent if they share a face or a corner. The border cells are defined to be the partition cells that are adjacent to at least one cell that is not a partition cell.
\end{definition}

\begin{figure}
\begin{center}
\begin{tikzpicture}
\tikzset{cross/.style={cross out, draw=black, minimum size=2*(#1-\pgflinewidth), inner sep=0pt, outer sep=0pt}, cross/.default={4pt}};
\draw[step=0.3,black,thin] (0,0) grid (3,3);
\node[fill=gray] at (0.15,0.15) {};
\node[fill=gray] at (0.15,0.45) {};
\node[fill=gray] at (0.15,0.75) {};
\node[fill=gray] at (0.15,1.05) {};
\node[fill=gray] at (0.15,1.35) {};
\node[fill=gray] at (0.15,1.65) {};
\node[fill=gray] at (0.15,1.95) {};
\node[fill=gray] at (0.15,2.25) {};
\node[fill=gray] at (0.15,2.55) {};
\node[fill=gray] at (0.45,0.15) {};
\node[fill=gray] at (0.45,0.45) {};
\node[fill=gray] at (0.45,0.75) {};
\node[fill=gray] at (0.45,1.05) {};
\node[fill=gray] at (0.45,1.35) {};
\node[fill=gray] at (0.45,1.65) {};
\node[fill=gray] at (0.45,1.95) {};
\node[fill=gray] at (0.75,0.15) {};
\node[fill=gray] at (0.75,0.45) {};
\node[fill=gray] at (0.75,0.75) {};
\node[fill=gray] at (0.75,1.05) {};
\node[fill=gray] at (0.75,1.35) {};
\node[fill=gray] at (0.75,1.65) {};
\node[fill=gray] at (1.05,0.15) {};
\node[fill=gray] at (1.05,0.45) {};
\node[fill=gray] at (1.05,0.75) {};
\node[fill=gray] at (1.05,1.05) {};
\node[fill=gray] at (1.05,1.35) {};
\node[fill=gray] at (1.05,1.65) {};
\node[fill=gray] at (1.35,0.15) {};
\node[fill=gray] at (1.35,0.45) {};
\node[fill=gray] at (1.35,0.75) {};
\node[fill=gray] at (1.35,1.05) {};
\node[fill=gray] at (1.35,1.35) {};
\node[fill=gray] at (1.35,1.65) {};
\node[fill=gray] at (1.65,0.15) {};
\node[fill=gray] at (1.65,0.45) {};
\node[fill=gray] at (1.65,0.75) {};
\node[fill=gray] at (1.65,1.05) {};
\node[fill=gray] at (1.95,0.15) {};
\node[fill=gray] at (1.95,0.45) {};
\node[fill=gray] at (1.95,0.75) {};
\node[fill=gray] at (2.25,0.15) {};
\node[fill=gray] at (2.25,0.45) {};
\node[fill=gray] at (2.55,0.15) {};
\node[fill=gray] at (2.85,0.15) {};
\draw (0.15,1.95) node[cross] {};
\draw (0.15,2.25) node[cross] {};
\draw (0.15,2.55) node[cross] {};
\draw (0.45,1.65) node[cross] {};
\draw (0.45,1.95) node[cross] {};
\draw (0.75,1.65) node[cross] {};
\draw (1.05,1.65) node[cross] {};
\draw (1.35,1.05) node[cross] {};
\draw (1.35,1.35) node[cross] {};
\draw (1.35,1.65) node[cross] {};
\draw (1.65,0.75) node[cross] {};
\draw (1.65,1.05) node[cross] {};
\draw (1.95,0.45) node[cross] {};
\draw (1.95,0.75) node[cross] {};
\draw (2.25,0.15) node[cross] {};
\draw (2.25,0.45) node[cross] {};
\draw (2.55,0.15) node[cross] {};
\draw (2.85,0.15) node[cross] {};
\end{tikzpicture}
\end{center}
\caption{Illustration of a partition in $d=2$ with $m = 10$. The partition cells are indicated in gray, and the border cells are marked.}\label{fig:integer-partition-illustration}
\end{figure}
\begin{lemma}\label{lemma:border-cells}
The number of border cells in any $(d-1)$-dimensional integer partition with entries from $\{1, \dots, m\}$ is at most $m^{d} - (m-1)^d$.
\end{lemma}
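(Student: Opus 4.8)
The plan is to construct an explicit injection from the set of border cells into a set of ``line tops'' of size $m^d-(m-1)^d$, by sliding each border cell along a fixed diagonal direction until it leaves the grid.  Write a cell as $c=(x_1,\dots,x_{d-1},r)$ with $x_i,r\in[m]$; per the definition the values $A_{i_1,\dots,i_{d-1}}$ are non-decreasing in each coordinate, and $c$ is a partition cell (we write $c\in P$) exactly when $r\le A_{x_1,\dots,x_{d-1}}$.  Let $v=(-1,\dots,-1,+1)\in\mathbb{Z}^d$, the direction that lowers the first $d-1$ coordinates and raises the last; geometrically $v$ points ``out of'' the staircase $P$, and it is used throughout.

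The heart of the argument is a local monotonicity observation: if $c=(x_1,\dots,x_{d-1},r)\in P$, $c+v\in[m]^d$, and $c+v\in P$, then $c$ is \emph{not} a border cell.  Indeed, $c+v=(x_1-1,\dots,x_{d-1}-1,r+1)\in P$ says $r+1\le A_{x_1-1,\dots,x_{d-1}-1}$, while any grid cell $c''=(y_1,\dots,y_{d-1},s)$ adjacent to $c$ has $y_i\ge x_i-1$ for $i<d$ and $s\le r+1$; monotonicity then gives $s\le r+1\le A_{x_1-1,\dots,x_{d-1}-1}\le A_{y_1,\dots,y_{d-1}}$, so $c''\in P$.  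Thus $c$ has no non-partition neighbour.  (I read ``adjacent'' in the $\ell_\infty$ sense, $\lVert c-c''\rVert_\infty\le1$, which is the most inclusive reading and hence the hardest case.)

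Next I would partition $[m]^d$ into the lines $\{c+tv:t\in\mathbb{Z}\}\cap[m]^d$.  Along such a line, listed in the direction of $v$, the first $d-1$ coordinates strictly decrease and the last strictly increases, so the gap $A_{x_1,\dots,x_{d-1}}-r$ is strictly decreasing and the partition cells on the line form an initial segment $c_1,\dots,c_{t_0}$.  Feeding a border cell $c_k$ of the line into the contrapositive of the local observation forces $c_k+v$ to be outside $[m]^d$ or outside $P$, and in either case this pins down $k=t_0$.  Hence \emph{each line contains at most one border cell} — its topmost partition cell.  This is the one genuinely nontrivial step, and the place where monotonicity of $A$ is doing the work.

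Finally, send each border cell $c$ to $\phi(c)$, the last cell of its line in the direction of $v$ (equivalently, follow $v$ from $c$ until some coordinate $i<d$ would drop below $1$ or the last coordinate would exceed $m$).  Then $\phi(c)\in R:=\{y\in[m]^d : y_i=1\text{ for some }i<d,\ \text{or }y_d=m\}$, and counting separately the cells with $y_d=m$ and those with $y_d<m$ and some of the first $d-1$ coordinates equal to $1$ gives $|R|=m^{d-1}+(m-1)\bigl(m^{d-1}-(m-1)^{d-1}\bigr)=m^d-(m-1)^d$.  Distinct border cells lie on distinct lines and each line has a unique such exit cell, so $\phi$ is injective, and the number of border cells is at most $|R|=m^d-(m-1)^d$.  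All remaining steps are routine bookkeeping; the crux is the ``at most one border cell per line'' claim.
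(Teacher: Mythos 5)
Your proof is correct, and it is rigorous where the paper's is not. The paper's argument is essentially a pictorial assertion: for $d=2$ and $d=3$ it exhibits a specific extremal configuration (the partition whose border is the outer shell of the cube), claims without real justification that this maximizes the number of border cells, and then extends to general $d$ with the single sentence that the count is ``the total number of cells minus the number of cells in an $(m-1)^d$ grid.'' No argument is given for why an arbitrary monotone partition cannot have a larger border. Your proof closes exactly this gap by constructing an explicit injection from the set of border cells into a concrete set $R$ of size $m^d-(m-1)^d$, namely the cells $y\in[m]^d$ with $y_i=1$ for some $i<d$ or $y_d=m$. The two nontrivial ingredients are both sound: the local lemma that if $c\in P$ and $c+v\in[m]^d\cap P$ (with $v=(-1,\dots,-1,+1)$) then every $\ell_\infty$-neighbor of $c$ also lies in $P$, which uses monotonicity of $A$ exactly once; and the consequence that along each line in direction $v$ the partition cells form an initial segment, so there is at most one border cell per line, namely the last partition cell on that line. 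Since the lines partition $[m]^d$ and each has a unique exit cell in $R$, the injection and hence the bound follow. Your arithmetic for $|R|$ checks out: the complement of $R$ is $\{y: y_i\ge 2\ \forall i<d,\ y_d\le m-1\}$, of size $(m-1)^d$. Note that the paper's prose description of a $1$-dimensional partition (``non-increasing'') contradicts its own formal Definition~4 (``non-decreasing''); you followed the formal definition, and in either convention the argument works after a sign flip on $v$. Overall your route is genuinely different from, and more careful than, the paper's.
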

\begin{proof}
When $d=2$, the number of cells on the border of any ($1$-dimensional) partition with values in $\{1, \dots, m\}$ is at most $2m -1 = m^2 - (m-1)^2$, corresponding to a path from $(1,m)$ to $(m,1)$. When $d=3$, the number of border cells in any ($2$-dimensional) partition with values in $\{1, \dots, m\}$ is at most corresponding to border cells that include $(1,m,m)$ and $(m,1,1)$. All partitions with such border cells have the same number of border cells. The simplest of these is the one where each cell is on the perimeter of the cube. The number of border cells in such a partition is equal to $3m^2 - 3m + 1 = m^3 - (m-1)^3$. For general $d$, the number of border cells in a $(d-1)$-dimensional partition taking values in $\{1, \dots, m\}$ is upper bounded by the total number of cells minus the number of cells in an $(m-1)^d$ grid, in other words, $m^{d} - (m-1)^d$. 
\end{proof}
The key idea of the proof of the upper bound in Theorem \ref{thm:labeling-number-bounds} comes from the following lemma.
\begin{lemma}\label{lemma:border-cells-upper-bound}
Let $N \sim \text{Binom}\left(n, \frac{m^{d} - (m-1)^d}{m^d}\right)$. It holds that
\begin{align*}
\mathbb{E}[L(X_1, \dots, X_n)] &\leq \left|P\left([m]^d\right) \right| \mathbb{E}[2^N].
\end{align*}
\end{lemma}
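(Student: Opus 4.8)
The plan is an overcounting argument. Discretize $[0,1]^d$ into the $m^d$ congruent axis-aligned subcubes indexed by $[m]^d$; since $X_1,\dots,X_n$ are i.i.d.\ uniform, each falls into a uniformly random one of these cells, independently across $i$. Given a monotone $\{0,1\}$-labeling $\phi$ of $X_1,\dots,X_n$, let $V$ be the up-set of $[0,1]^d$ generated by the points $\phi$ labels $1$, and set $S_0^{\star}=[0,1]^d\setminus V$; coordinate-wise monotonicity of $\phi$ forces every $0$-labeled point into $S_0^{\star}$ and every $1$-labeled point into $V$. Define $\Psi(\phi)\in P([m]^d)$ to be the partition whose partition cells are exactly the grid cells contained in $S_0^{\star}$; pushing a witness point downward into any smaller cell shows this collection of cells is a down-set, so $\Psi$ is well defined. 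Writing $k_p$ for the number of sample points lying in border cells of $p$, the crux is the fiber bound $|\Psi^{-1}(p)|\le 2^{k_p}$; granting it,
\[
L(X_1,\dots,X_n) \;=\; \sum_{p\in P([m]^d)} |\Psi^{-1}(p)| \;\le\; \sum_{p\in P([m]^d)} 2^{k_p},
\]
and taking expectations yields $\mathbb{E}[L(X_1,\dots,X_n)] \le \sum_{p\in P([m]^d)} \mathbb{E}[2^{k_p}]$.

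To justify $|\Psi^{-1}(p)|\le 2^{k_p}$, fix $p$ and take $\phi\in\Psi^{-1}(p)$. Every sample point in a partition cell of $p$ lies in $S_0^{\star}$ and is therefore $0$-labeled, and every sample point in a cell contained in $V$ lies in $V$ and is therefore $1$-labeled; hence the only sample points whose labels are not pinned down by $p$ lie in cells straddling the boundary between $V$ and $S_0^{\star}$, and these boundary cells are contained in (or in natural correspondence with) the border cells of $p$ via the face/corner adjacency of the grid. Making this last correspondence precise is the step I expect to be the main obstacle: the order on cells is strictly coarser than the order it induces on the points inside them --- two comparable cells may contain only incomparable points --- so some care is needed, especially for cells lying on the outer faces of the $[m]^d$ grid, to certify that every ambiguous point really does fall into a genuine border cell of $\Psi(\phi)$. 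Once the correspondence is in place, $\phi$ is determined by $p$ together with its restriction to the $k_p$ sample points in border cells of $p$, giving $|\Psi^{-1}(p)|\le 2^{k_p}$.

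It remains to bound $\mathbb{E}[2^{k_p}]$ uniformly in $p$. Because each $X_i$ lands in a uniformly random cell independently of the others, the number of sample points in any fixed collection of $j$ cells is distributed as $\text{Binom}(n,j/m^d)$, and this family is stochastically increasing in $j$. By Lemma~\ref{lemma:border-cells} the number of border cells of $p$ is at most $m^d-(m-1)^d$, so $k_p$ is stochastically dominated by $N\sim\text{Binom}\!\left(n,\tfrac{m^d-(m-1)^d}{m^d}\right)$; since $t\mapsto 2^t$ is increasing, $\mathbb{E}[2^{k_p}]\le\mathbb{E}[2^N]$, a quantity independent of $p$. Substituting into the displayed inequality gives $\mathbb{E}[L(X_1,\dots,X_n)]\le\bigl|P([m]^d)\bigr|\,\mathbb{E}[2^N]$, which is the claim.
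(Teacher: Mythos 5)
Your proposal follows the same high-level plan as the paper (and as Devroye's Theorem~13.13 for $d=2$, which the paper credits): discretize $[0,1]^d$ into an $m^d$ grid, associate each monotone labeling with an integer partition in $P([m]^d)$, bound the fiber size by $2$ raised to the number of sample points in ``border cells,'' and finish with stochastic dominance by $N\sim\text{Binom}(n,\tfrac{m^d-(m-1)^d}{m^d})$. Your treatment of the final step is in fact cleaner than the paper's, which waves at ``the maximal number of border cells'' rather than summing $\mathbb{E}[2^{k_p}]$ over fixed $p$ and then invoking $j_p\le m^d-(m-1)^d$ together with monotonicity of $t\mapsto 2^t$.

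However, the obstacle you flag is a real gap, and with the $\Psi$ you chose it is worse than you seem to expect: it is not merely that the correspondence needs care, it is that the straddling cells are \emph{disjoint} from the border cells of $\Psi(\phi)$. You took the partition cells of $\Psi(\phi)$ to be the cells \emph{fully contained} in $S_0^{\star}$. A straddling cell intersects $V$, so it is a non-partition cell of $\Psi(\phi)$, and border cells are by definition partition cells; hence no straddling cell is ever a border cell. Concretely, in $d=2$ with $S_0^{\star}=\{x_1+x_2<1\}$ and half-open cells, the partition cells of your $\Psi$ are $\{i+j\le m\}$, whose border cells are $\{m-1\le i+j\le m\}$, while the straddling cells are $\{i+j=m+1\}$ --- disjoint sets. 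The fiber bound $|\Psi^{-1}(p)|\le 2^{k_p}$ therefore fails. The repair is to flip the convention: take the partition cells to be the cells that \emph{intersect} $S_0^{\star}$ (equivalently, cells not entirely contained in $V$), which is still a down-set. Then a straddling cell $C$ is a partition cell, and (when it exists) the diagonal cell $C^{+}=C+(\tfrac1m,\dots,\tfrac1m)$ has its lower corner equal to $C$'s upper corner, which dominates the $V$-witness in $C$; since $V$ is an up-set, $C^{+}$ lies entirely in $V$, so $C$ is adjacent to a non-partition cell and is genuinely a border cell. This is the partition the paper implicitly has in mind when it asserts that the cells meeting the boundary ``are necessarily the border cells of some integer partition.'' (Both arguments still leave the grid's top corner cell $(m,\dots,m)$ untreated, since it has no diagonal-up neighbor and can never be a border cell under the paper's definition; this is a minor issue neither proof resolves explicitly.)
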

\begin{proof}
The idea of the proof comes from the proof of Theorem 13.13 in \cite{Devroye1996}, who showed a similar result for $d=2$. Consider a binary coordinate-wise monotone function $f$, with domain $[0,1]^d$. Let $S_0 = \{x \in [0,1]^d : f(x) = 0\}$ and $S_1 = \{x \in [0,1]^d : f(x) = 1\}$. The number of binary labelings of a set of points $X_1, \dots, X_n$ is equal to the number of partitions $(S_0, S_1)$ producing distinct labelings. To upper-bound the number of dividing surfaces, we divide the $d$-dimensional cube into an $m^d$ grid, $[m]^d$. That is, each cell in the grid has side length $\frac{1}{m}$. Let $B$ be the intersection of the boundaries of the $S_0$ and $S_1$. For example, if 
\begin{align*}
f(x) &= \begin{cases}
0 &\text{ if } x_ 1 + x_2 <1\\
1 &\text{ if } x_1 + x_2 \geq 1
\end{cases}
\end{align*}
then $B = \{x : x_1 + x_2 = 1\}$. Now consider the subset of cells that contain at least one element of $B$. These cells are necessarily the border cells of some $(d-1)$-dimensional integer partition with values from $\{1, \dots, m\}$. Therefore, we can upper bound the number of labelings as follows. For a boundary $B$ corresponding to a partition $(S_0, S_1)$, add a contribution of $2^{N_B}$, where $N_B$ is the number of points within the border cells containing the elements of $B$. This contribution corresponds to all (valid or invalid) labelings of the points within the border cells. Points outside the border cells are labeled $0$ if they fall in $S_0$ and $1$ if they fall in $S_1$. From Lemma \ref{lemma:border-cells}, the number of points in the border cells of a partition with the maximal number of border cells is distributed as a binomial random variable $N$ with parameters $\left(n, \frac{m^{d} - (m-1)^d}{m^d}\right)$. The expected labeling number is therefore upper bounded by $\left|P\left([m]^d\right) \right| \cdot  \mathbb{E}[2^N]$.
\end{proof}

\begin{proof}[Proof of Theorem \ref{thm:labeling-number-bounds}]
\textbf{Upper bound}\\
From Lemma \ref{lemma:border-cells-upper-bound}, we know that
\[\mathbb{E}[L(X_1, \dots, X_n)] \leq \left|P\left([m]^d\right) \right| \cdot \mathbb{E}[2^N].\]
Now, 
\begin{align*}
\mathbb{E}[2^N] =  \mathbb{E}[e^{\log(2)N}] = M_N(\log(2)),
\end{align*}
where $M_N(\cdot)$ is the moment-generating function of the random variable $N$. A binomial random variable $Z$ with parameters $(n,p)$ has moment-generating function $M_Z(\theta) = (1-p + pe^{\theta})^n$. Additionally, \cite{Moshkovitz2014} showed that \[\left|P\left([m]^d\right) \right| \leq \binom{2m}{m}^{m^{d-2}}.\] Substituting, 
\begin{align*}
\mathbb{E}[L(X_1, \dots, X_n)] &\leq \binom{2m}{m}^{m^{d-2}} \left(1-\frac{m^{d} - (m-1)^d}{m^d} + \frac{m^{d} - (m-1)^d}{m^d}e^{\log(2)}\right)^n\\
&= \binom{2m}{m}^{m^{d-2}}\left(1 + \frac{m^{d} - (m-1)^d}{m^d}\right)^n\\
&\leq 2^{2m \cdot m^{d-2}}\left(1 + \frac{m^{d} - (m-1)^d}{m^d}\right)^n\\
&=\exp\left[2\log(2) m^{d-1}+ n \log\left(1 + \frac{m^{d} - (m-1)^d}{m^d}\right) \right]
\end{align*}
Choosing $m = n^{\frac{1}{d}}$, 
\begin{align*}
\mathbb{E}[L(X_1, \dots, X_n)] &\leq \exp\left[2\log(2)n^{\frac{d-1}{d}}+ n \log\left(1 + \frac{n - \left(n^{\frac{1}{d}} - 1\right)^d}{n}\right) \right]
\end{align*}
Since $\log(1+x) \leq x$,
\begin{align*}
\mathbb{E}[L(X_1, \dots, X_n)] &\leq \exp\left[2\log(2)n^{\frac{d-1}{d}} +  n - \left(n^{\frac{1}{d}} - 1\right)^d \right]
\end{align*}
Applying the Binomial Theorem,
\begin{align*}
n -\left(n^{\frac{1}{d}} - 1\right)^d &= n - \sum_{k=0}^d \binom{d}{k} n^{\frac{d-k}{d}} (-1)^k\\
&=  -\sum_{k=1}^d \binom{d}{k}n^{\frac{d-k}{d}} (-1)^k\\
&\leq \sum_{k=1}^d \binom{d}{k} \cdot \max_{k \in \{1, \dots, d\}} n^{\frac{d-k}{d}} (-1)^{k+1}\\
&= \left(2^d -1\right)n^{\frac{d-1}{d}}
\end{align*}
Substituting, we obtain
\begin{align*}
\mathbb{E}[L(X_1, \dots, X_n)] &\leq \exp\left[2\log(2)n^{\frac{d-1}{d}} +  \left(2^d -1\right)n^{\frac{d-1}{d}}\right]\\
&= \exp \left[ \left(2^d + 2\log(2) -1 \right) n^{\frac{d-1}{d}}\right]
\end{align*}
\textbf{Lower Bound}\\
Let $N$ be an integer, which will be specified later. Divide $[0,1]^d$ into $N^d$ cells of side length $\frac{1}{N}$. The cells are labeled in the natural coordinate system, writing $C = (x_1, \dots, x_d) \in [N]^d$. We say that two cells are \emph{incomparable} if for all $x \in C_1$ and $y \in C_2$, neither $x \preceq y$ nor $x \succeq y$. 
Let us find the number of incomparable cells. 
\begin{lemma}\label{lemma:incomparable-cells}
The number of incomparable cells is at least $\binom{N + d -2}{d-1}$.
\end{lemma}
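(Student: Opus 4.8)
The plan is to exhibit an explicit antichain of cells of the required size by taking all cells lying on a single ``diagonal'' slice of the grid. Concretely, I would fix the value $c = N + d - 1$ and let $\mathcal{D}$ be the set of cells $(x_1, \dots, x_d) \in \{1, \dots, N\}^d$ with $x_1 + \dots + x_d = c$. First I would observe that every such tuple automatically satisfies $x_i \le N$: if all other coordinates are at least $1$, then $x_i \le c - (d-1) = N$, so the upper-bound constraint is never active. Consequently, counting $\mathcal{D}$ reduces to a pure stars-and-bars computation: substituting $y_i = x_i - 1 \ge 0$ gives $y_1 + \dots + y_d = N - 1$, whose number of nonnegative integer solutions is $\binom{N + d - 2}{d - 1}$. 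This is exactly the target bound, so it remains only to show that the cells of $\mathcal{D}$ are pairwise incomparable.

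For incomparability, take two distinct cells $C = (x_1,\dots,x_d)$ and $C' = (y_1,\dots,y_d)$ in $\mathcal{D}$. Since $\sum_i x_i = \sum_i y_i$ and the tuples differ, there must be an index $j$ with $x_j > y_j$ and an index $i$ with $x_i < y_i$ (otherwise one sum would strictly exceed the other). Working with half-open cells $C = \prod_k [\tfrac{x_k-1}{N}, \tfrac{x_k}{N})$ — equivalently, restricting to cell interiors, which merely sidesteps a harmless boundary technicality in the ``for all $x \in C_1, y \in C_2$'' phrasing of the definition — any $p \in C$ has $p_j \ge \tfrac{x_j - 1}{N} \ge \tfrac{y_j}{N}$ while any $q \in C'$ has $q_j < \tfrac{y_j}{N}$, so $p_j > q_j$ and hence $p \not\preceq q$; symmetrically, using the index $i$ we get $q_i > p_i$ and hence $q \not\preceq p$. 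Thus no point of $C$ is comparable to any point of $C'$, so $C$ and $C'$ are incomparable, and $\mathcal{D}$ is an antichain of cells of size $\binom{N+d-2}{d-1}$.

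I do not anticipate a serious obstacle; the argument is essentially routine. The two points that need a little care are (i) choosing the diagonal value $c = N+d-1$ — the largest value for which the ``$x_i \le N$'' constraint is vacuous, so that the count collapses to a single binomial coefficient rather than an inclusion–exclusion sum — and (ii) reconciling the ``for all $x \in C_1, y \in C_2$'' wording with the fact that distinct closed grid cells share boundary faces, which is handled by the half-open/interior convention above and does not affect later use of the lemma since data points are placed in cell interiors anyway. Asymptotically $\binom{N+d-2}{d-1} \sim N^{d-1}/(d-1)!$, which, with $N \asymp n^{1/d}$, is what feeds the claimed lower bound in Theorem \ref{thm:labeling-number-bounds}.
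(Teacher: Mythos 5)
Your proof is correct and takes essentially the same approach as the paper: both count cells on the diagonal slice $\sum_i x_i = N+d-1$ and observe that cells with equal coordinate sums are pairwise incomparable, yielding $\binom{N+d-2}{d-1}$. Your extra care with the half-open-cell convention to handle shared boundary faces is a detail the paper's proof glosses over, but it does not change the argument.
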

\begin{proof}
Consider any two cells $C_1 = (x_1, x_2, \dots, x_d)$ and $C_2 = (y_1, y_2, \dots, y_d)$. If $\sum_{i=1}^d x_i = \sum_{i=1}^d y_i$, then either $(x_1, \dots, x_d)= (y_1, \dots, y_d)$ or $(x_1, \dots, x_d) \not \preceq (y_1, \dots, y_d)$ and $(x_1, \dots, x_d) \not \succeq (y_1, \dots, y_d)$. Observe that if $(x_1, \dots, x_d) \not \preceq (y_1, \dots, y_d)$ and $(x_1, \dots, x_d) \not \succeq (y_1, \dots, y_d)$, then $C_1$ and $C_2$ are incomparable. In dimension $d$, let us therefore count the number of cells whose coordinates sum to $N + d -1$. This corresponds to the number of integer compositions of $(N+d -1)$ into $d$ parts, which is given by $\binom{N + d -2}{d-1}$.
\end{proof}

The number of incomparable points, $Y_n$ is at least the number of occupied incomparable cells, which we call $\Delta$. For $d \geq 2$,
\begin{align*}
\mathbb{E}[\Delta] &\geq \binom{N + d -2}{d-1} \left(1 - \left(1-\frac{1}{N^d}\right)^n\right)\\
&= \frac{(N+d-2)!}{(d-1)! (N-1)!}\left(1 - \left(1-\frac{1}{N^d}\right)^n\right)\\
&\geq \frac{N^{d-1}}{(d-1)!}\left(1 - \left(1-\frac{1}{N^d}\right)^n\right)
\end{align*}
Now let $N = \left \lceil n^{\frac{1}{d}} \right \rceil$. Then
\begin{align*}
\mathbb{E}[\Delta] &\geq \frac{n^{\frac{d-1}{d}}}{(d-1)!}  \left(1- e^{-1}\right)
\end{align*}
We can now lower bound the labeling number. By Jensen's inequality,
\begin{align*}
\mathbb{E}\left[L(X)\right] &\geq \mathbb{E} \left[2^{\Delta}\right] \geq 2^{\mathbb{E}[\Delta]} \geq 2^{\frac{1-e^{-1}}{(d-1)!} n^{\frac{d-1}{d}}} = \exp \left[\frac{\log(2) (1- e^{-1})}{(d-1)!}n^{\frac{d-1}{d}} \right]
\end{align*}
\end{proof}

Finally, we tie together the above results to prove Theorem \ref{thm:statistical-consistency}.
\begin{proof}[Proof of Theorem \ref{thm:statistical-consistency}]
The proof is by chaining the inequalities from Propositions \ref{prop:statistical-consistency}- \ref{prop:binary-labeling}, along with Theorem \ref{thm:labeling-number-bounds}.
\end{proof}

\begin{proof}[Proof of Corollary \ref{corollary:noisy-output-statistical-consistency}]
Equivalently, we show that $s = o\left(\sqrt{\log(n)}\right)$ and $d = e^{o \left(\frac{n}{s}\right)}$ suffices. Analyzing the leading term in the exponent,
\begin{align*}
2^s n^{\frac{s-1}{s}} &= n^{1 + s \frac{\log(2)}{\log(n)} - \frac{1}{s}}
\end{align*}
Analyzing the exponent,
\begin{align*}
1 + s \frac{\log(2)}{\log(n)} - \frac{1}{s}&= 1 + \frac{o \left( \sqrt{\log(n)}\right)}{\log(n)} - \frac{1}{o \left(\sqrt{\log(n)} \right)}\\
&= 1 + o \left( \frac{1}{\sqrt{\log(n)}}\right) - \omega \left( \frac{1}{\sqrt{\log(n)}}\right) \\
&= 1  - \omega \left( \frac{1}{\sqrt{\log(n)}}\right)
\end{align*}
Therefore, 
\begin{align*}
\exp\left\{\left(\left \lceil \frac{2^{11}}{\epsilon^2} \right \rceil - 1 \right) \left(2^s + 2\log(2) -1 \right) n^{\frac{s-1}{s}}  {-\frac{3 \epsilon^3 n}{41 \times 2^{10}}} \right\} & = \exp\left\{\Theta(1)n^{1  - \omega \left( \frac{1}{\sqrt{\log(n)}}\right)} - \Theta(n) \right \}\\
&= \exp \left\{ \Theta(n) \left( n^{- \omega \left( \frac{1}{\sqrt{\log(n)}}\right)} - 1\right) \right \}\\
&= \exp \left\{ \Theta(n) \left( e^{- \omega \left( \frac{1}{\sqrt{\log(n)}}\right) \log(n)} - 1\right) \right \}\\
&= \exp \left\{ \Theta(n) \left( e^{- \omega \left( \sqrt{\log(n)}\right)}- 1\right) \right \}\\
&= \exp \left\{ \Theta(n) \left( o \left(e^{-\sqrt{\log(n)}}\right)- 1\right) \right \}\\
& \exp \left\{ - \Theta(n)\right\}
\end{align*}

Next, 
\begin{align*}
\binom{d}{s} &\leq d^s\\
&= e^{s \log(d)}
\end{align*}
We need $s \log(d) = o(n)$, or equivalently, $d = e^{o  \left( \frac{n}{s}\right)}$.
\end{proof}

To prove Theorem \ref{thm:multiple-coordinates}, we first give guarantees for the recovery of a single coordinate. For that, we need Lemma \ref{lemma:stochastically-increasing}.
\begin{lemma}\label{lemma:stochastically-increasing}
It holds that $p_{k} > 0$. In other words, when $X_1$ is greater than $X_2$ in at least one active coordinate the output is more likely to be larger than smaller.
\end{lemma}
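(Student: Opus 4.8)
The plan is to condition on the two $k$-th coordinates, reduce to a one-dimensional stochastic-dominance comparison, and then extract strictness from the fact that an active coordinate genuinely moves $f$. Since $f$ does not depend on inactive coordinates, we may treat $f$ as a function of its $s$ active coordinates only. Fix $k \in A$; for $c \in [0,1]$ let $\mu_c$ be the law of $f(c,Z) + W$, where $Z$ is uniform on $[0,1]^{s-1}$ (the remaining active coordinates of a sample), $W$ is the output noise, and $Z$ is independent of $W$. Conditioning on $X_{1,k} = a$ and $X_{2,k} = b$ with $a > b$, the responses $Y_1$ and $Y_2$ become independent with laws $\mu_a$ and $\mu_b$, and the event $q(1,2,k)=1$ is exactly $\{X_{1,k} > X_{2,k}\}$, whose conditional law on $\{(a,b):a>b\}\subseteq[0,1]^2$ has full support. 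Writing $p_k(a,b)$ for the version of $p_k$ conditioned on these coordinate values, it suffices to show $p_k(a,b)\ge 0$ always and $p_k(a,b)>0$ on a positive-measure set of pairs.

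First I would prove stochastic dominance: for $a\ge b$, coupling $f(a,Z)+W$ and $f(b,Z)+W$ through a common pair $(Z,W)$ and using $(b,Z)\preceq_A (a,Z)$ gives $f(a,Z)+W\ge f(b,Z)+W$ almost surely, so $\mu_a$ stochastically dominates $\mu_b$. Consequently, for independent $Y_1\sim\mu_a$, $Y_2\sim\mu_b$ and an independent copy $Y_2'$ of $Y_2$: conditioning on $Y_2$ and using that dominance gives $\mathbb{P}(Y_1>Y_2\mid Y_2)\ge\mathbb{P}(Y_2'>Y_2\mid Y_2)$ and $\mathbb{P}(Y_1<Y_2\mid Y_2)\le\mathbb{P}(Y_2'<Y_2\mid Y_2)$, we obtain $p_k(a,b)=\mathbb{E}[\mathbb{P}(Y_1>Y_2\mid Y_2)-\mathbb{P}(Y_1<Y_2\mid Y_2)]\ge\mathbb{E}[\mathbb{P}(Y_2'>Y_2\mid Y_2)-\mathbb{P}(Y_2'<Y_2\mid Y_2)]=0$, the last equality by the symmetry of two i.i.d.\ variables. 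Taking expectation over $a>b$ gives $p_k\ge 0$. (An alternative derivation of $p_k\ge 0$ avoiding distribution functions: condition additionally on the unordered pair $\{(X_{1,-k},W_1),(X_{2,-k},W_2)\}$; the two orderings are equiprobable, and the two corresponding values of $Y_1-Y_2$ sum to $[f(a,z)-f(b,z)]+[f(a,z')-f(b,z')]\ge 0$, while $u+v\ge 0$ forces $\mathrm{sign}(u)+\mathrm{sign}(v)\ge 0$, so the conditional mean of $\mathrm{sign}(Y_1-Y_2)$ is nonnegative.)

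For strictness, the key point is that for a stochastically ordered pair the inequality above is tight only if $\mu_a=\mu_b$, which in particular forces $\mathbb{E}_Z[f(a,Z)]=\mathbb{E}_Z[f(b,Z)]$; equivalently, $\mathbb{E}_Z[f(a,Z)]>\mathbb{E}_Z[f(b,Z)]$ implies $p_k(a,b)>0$. Now $\phi(a):=\mathbb{E}_Z[f(a,Z)]$ is non-decreasing, and because $k$ is a genuine active coordinate — that is, $f$ is not almost-everywhere constant in its $k$-th argument, which is the whole reason $k$ belongs to $A$ — the function $\phi$ is non-constant. Monotonicity then upgrades one strict increase $\phi(a_0)>\phi(b_0)$ to $\phi(a)>\phi(b)$ on the entire positive-measure set $\{(a,b): a\ge a_0>b_0\ge b\}$, on which $p_k(a,b)>0$; hence $p_k=\mathbb{E}[p_k(a,b)]>0$.

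I expect the main obstacle to be the equality case of the stochastic-dominance step: showing that equal ``win probabilities'' between stochastically ordered independent variables force equality of laws takes some care when $\mu_a$ and $\mu_b$ have atoms — one compares $\mathbb{P}(Y_1\le t)+\mathbb{P}(Y_1<t)$ with its $\mu_b$-analogue, deduces the two distribution functions agree $\mu_b$-almost everywhere, and then promotes this to agreement everywhere using that both functions are monotone and already ordered. The only other point worth stating explicitly is the non-degeneracy convention that the active set consists of coordinates that actually influence the output, without which $p_k$ can vanish (for instance if $f$ is constant).
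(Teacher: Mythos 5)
Your proof is correct and takes a genuinely different route from the paper's, so a comparison is warranted. The paper works directly with the full pair $(X_1,X_2)$ conditioned on $q(1,2,k)=1$: it relabels them as $X_+$ (larger $k$-th coordinate) and $X_-$, asserts that $\mathbb{P}(f(X_+)>f(X_-))>\mathbb{P}(f(X_+)<f(X_-))$ ``due to monotonicity,'' and then invokes a coupling of $W_1,W_2$ to carry this over to $p_k>0$. The key inequality there is in fact a swap-symmetry argument (exchanging the non-$k$ coordinates of the two samples sends a ``lose'' configuration $f(X_+)<f(X_-)$ to a ``win'' configuration of at least the same magnitude), but the paper leaves this implicit. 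You instead condition on the two $k$-th coordinates $a>b$, establish genuine stochastic dominance $\mu_a\succeq\mu_b$ of the conditional response laws via a common coupling of $(Z,W)$, derive $p_k(a,b)\ge 0$ through the clean comparison with an i.i.d.\ copy, and then reduce strictness to non-constancy of the marginal mean $\phi(a)=\mathbb{E}_Z[f(a,Z)]$. Your version is more transparent about where the inequality comes from and isolates exactly where strictness can fail; the paper's version is shorter but the strict inequality step is asserted rather than proved. Two things worth noting. First, you correctly identify that strictness needs a non-degeneracy assumption: the paper's definition of an active set $A$ merely requires $x_1\preceq_A x_2\Rightarrow f(x_1)\le f(x_2)$, and under that definition a constant $f$ has $p_k=0$; both proofs implicitly assume $f$ genuinely depends on coordinate $k$, but you flag it. Second, your equality-case claim (stochastic dominance plus equal win/loss probabilities forces $\mu_a=\mu_b$) is correct but, as you note, requires a careful right-continuity and support argument; you acknowledge the subtlety, and the sketch you give (agreement $\mu_b$-a.e.\ promoted to agreement everywhere using monotonicity and the one-sided ordering) is the right strategy, though it does take a few more lines to execute than ``monotone and already ordered'' suggests.
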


\begin{proof}
Consider the following procedure. We sample $X_1$ and $X_2$ independently and uniformly on $[0,1]^d$. Fix $k \in A$. Let
\begin{align*}
X_+ = \begin{cases}
X_1 & \text{ if } X_{1,k} > X_{2,k}\\
X_2 &\text{ otherwise}
\end{cases}
\end{align*}
and
\begin{align*}
X_- = \begin{cases}
X_1 & \text{ if } X_{1,k} \leq X_{2,k}\\
X_2 &\text{ otherwise.}
\end{cases}
\end{align*}
In other words, $X_+$ is the right point according to coordinate $k$ and $X_-$ is the left point according to the same coordinate. Now, 
\begin{align*}
&\mathbb{P} \left( f(X_1) + W_1 > f(X_2) + W_2 | X_{1,k} > X_{2,k}  \right) \\
&= \mathbb{P} \left( f(X_1) + W_1 > f(X_2) + W_2 | X_{1,k} = X_+, X_{2,k} = X_-  \right)\\
&= \mathbb{P} \left( f(X_+) + W_1 > f(X_-) + W_2 | X_{1,k} = X_+, X_{2,k} = X_-  \right)\\
&= \mathbb{P} \left( f(X_+) + W_1 > f(X_-) + W_2  \right)
\end{align*}
Similarly, 
\begin{align*}
\mathbb{P} \left( f(X_1)  W_1 < f(X_2) + W_2 | X_{1,k} > X_{2,k}  \right) &= \mathbb{P} \left( f(X_+) + W_1 < f(X_-) + W_2  \right)
\end{align*}
Therefore, we can equivalently define $p_k$ as
\[p_{k} = \mathbb{P} \left( f(X_+) + W_1 > f(X_-) + W_2 \right) - \mathbb{P} \left( f(X_+) + W_1 < f(X_-) + W_2 \right) .\]
Our goal is to show that
\begin{align*}
&\mathbb{P} \left( f(X_+) + W_1 > f(X_-) + W_2 \right) > \mathbb{P} \left( f(X_+) + W_1 < f(X_-) + W_2 \right) 
\end{align*}
Due to the monotonicity of $f$ with respect to $A \ni k$, it holds that
\begin{align*}
&\mathbb{P} \left( f(X_+)  > f(X_-) \right) > \mathbb{P} \left( f(X_+)  < f(X_-) \right).
\end{align*}
Coupling $W_1$ and $W_2$ across the events $\{f(X_+)  > f(X_-)\}$ and $\{f(X_+)  < f(X_-)\}$, we obtain 
\begin{align*}
&\mathbb{P} \left( f(X_+) + W_1 > f(X_-) + W_2 \right) > \mathbb{P} \left( f(X_+) + W_1 < f(X_-) + W_2 \right).
\end{align*}
\end{proof}
We now give a guarantee for the recovery of a single coordinate.
\begin{lemma}\label{lemma:one-coordinate}
Suppose $s=1$, and assume without loss of generality that $A = \{1\}$. Algorithm \ref{alg:sequential-recovery} (which is equivalent to Algorithm \ref{alg:simultaneous-recovery} when $s=1$) recovers the correct coordinate with probability at least
\[1 -  (d-1) \exp\left(-\frac{n p_{1}^2}{16} \right).\]
\end{lemma}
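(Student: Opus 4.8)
The plan is to reduce the linear program to a convex minimization in the weight vector $v$, pin down its optimum via a purely combinatorial condition on the data, and then control that condition with a concentration bound for a U‑statistic. First I would eliminate the correction variables: in the linear program of Algorithm~\ref{alg:sequential-recovery} with $s=1$ (where S‑LPSR and LPSR coincide), for a fixed feasible $v$ the optimal choice of the nonnegative $c^{ij}_k$ makes the objective equal to $\Phi(v)\triangleq\sum_{(i,j)\in\mathcal{C}}\max\{0,\,1-\sum_{k=1}^d q(i,j,k)\,v_k\}$, where $\mathcal{C}$ is the set of ordered pairs $(i,j)$ with $Y_i>Y_j$ and $\sum_k q(i,j,k)\ge 1$. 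This $\Phi$ is convex and piecewise linear on the simplex $\{v\ge 0:\sum_k v_k=1\}$, and the algorithm returns the index of a largest component of a minimizer $v^\star$. For a coordinate $m$, let $C_m$ be the number of unordered pairs $\{i,j\}$ with $Y_i\ne Y_j$ such that the sample with the larger output also has the larger $m$‑th coordinate (``concordant in coordinate $m$''). Taking $A=\{1\}$, the key deterministic claim is: \emph{if $C_1>C_m$ for every $m\ne 1$, then $e_1$ is the unique minimizer of $\Phi$, so the algorithm outputs $\{1\}$.}

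To prove the claim I would exhibit a subgradient of $\Phi$ at $e_1$ pointing ``away'' from every other vertex. Each summand $\phi_{ij}(v)=\max\{0,1-\langle q(i,j,\cdot),v\rangle\}$ has $-q(i,j,\cdot)$ in its subdifferential at $e_1$: when $q(i,j,1)=0$ the summand is locally affine with that gradient, and when $q(i,j,1)=1$ it vanishes at $e_1$ with subdifferential the segment from $0$ to $-q(i,j,\cdot)$. Hence $\xi\triangleq-\sum_{(i,j)\in\mathcal{C}}q(i,j,\cdot)$ is a subgradient of $\Phi$ at $e_1$; since any ordered pair with $Y_i>Y_j$ and $q(i,j,m)=1$ already lies in $\mathcal{C}$, one computes $\langle\xi,e_m\rangle=-C_m$ for all $m$, so $\langle\xi,e_m-e_1\rangle=C_1-C_m$. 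For any simplex point, $v-e_1=\sum_{m\ne 1}v_m(e_m-e_1)$, whence $\langle\xi,v-e_1\rangle=\sum_{m\ne 1}v_m(C_1-C_m)\ge 0$, with equality only at $v=e_1$ under the hypothesis; the subgradient inequality $\Phi(v)\ge\Phi(e_1)+\langle\xi,v-e_1\rangle$ finishes it.

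For the probabilistic part I would show $\mathbb{P}(C_1\le C_m)\le\exp(-n p_1^2/16)$ for each inactive $m$ and union‑bound over the $d-1$ such coordinates. Write $C_1-C_m=\sum_{\{i,j\}}h_m(Z_i,Z_j)$ with $Z_i=(X_i,Y_i)$ and symmetric kernel $h_m=\mathbf{1}[\text{concordant in coordinate }1]-\mathbf{1}[\text{concordant in coordinate }m]\in\{-1,0,1\}$. Because coordinate $m\notin A$ is independent of the output and of $X_{\cdot,1}$, unwinding the definition of $p_1$ gives $\mathbb{E}[h_m]=p_1/2$, which is positive by Lemma~\ref{lemma:stochastically-increasing}. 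Thus $C_1-C_m$ is a degree‑$2$ U‑statistic with bounded kernel and positive mean $\binom{n}{2}p_1/2$; representing it as an average over permutations of averages of $\lfloor n/2\rfloor$ i.i.d.\ terms in $\{-1,0,1\}$, applying Jensen to $x\mapsto e^{-\lambda x}$ and then a Chernoff bound (a Hoeffding‑type inequality for U‑statistics) yields the stated tail, and the union bound gives the failure probability $(d-1)\exp(-n p_1^2/16)$.

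The main obstacle is twofold. First, one must ensure the reduction and the subgradient computation actually force the LP optimum to the single vertex $e_1$ rather than merely including it among several optima — this is exactly why the strict inequalities $C_1>C_m$ are needed (ties in the $X$‑coordinates have probability zero, so requiring strictness costs nothing). Second, one must carefully track the constant in the U‑statistic tail, so that the pair‑indexed statistic $C_1-C_m$ concentrates at rate $e^{-\Theta(n)p_1^2}$ (via the permutation‑averaging trick) rather than at the weaker rate one would naively expect, and that the bookkeeping produces the exponent $n p_1^2/16$.
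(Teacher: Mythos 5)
Your proposal is correct and tracks the same overall skeleton as the paper's proof: eliminate the slack variables $c^{ij}_k$ for fixed $v$ to obtain the piecewise-linear hinge objective, reduce the success of the LP to a pairwise comparison between the active coordinate and each inactive one, apply concentration, and union-bound over the $d-1$ inactive coordinates. The paper's route to the comparison is more computational: it expands $z(\bar v) - z(e_1) = \sum_{k\neq 1} u_k D_k$ with $D_k$ equal (in your notation) to $C_1 - C_k$, lower-bounds $\mathbb{E}[z(\bar v) - z(e_1)]$ by $\tfrac{1}{2}n(n-1)p_1$, and applies McDiarmid's inequality to the $2n$ variables $(X_i,W_i)$ with bounded-difference constant $2(n-1)$. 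Your subgradient argument at $e_1$ is a cleaner way to get the deterministic success criterion — it makes explicit that $C_1 > C_m$ for all inactive $m$ forces $e_1$ to be the \emph{unique} minimizer of $\Phi$ over the simplex, hence that the LP returns coordinate $1$ regardless of tie-breaking; the paper argues ``for all such $\bar v$'' somewhat informally, whereas your argument disposes of the uniformity issue in one stroke. On the probabilistic side you replace McDiarmid with Hoeffding's inequality for degree-$2$ U-statistics applied directly to $C_1 - C_m$; this is essentially equivalent bookkeeping but arguably tidier, since $C_1 - C_m$ is the natural statistic. Two small points: (i) your permutation-averaging bound gives $\exp(-\lfloor n/2\rfloor p_1^2/8)$, which equals $\exp(-np_1^2/16)$ only for even $n$, so for odd $n$ you get a marginally weaker $\exp(-(n-1)p_1^2/16)$ and would need a small adjustment to recover the exact stated constant; and (ii) the aside that ``ties in the $X$-coordinates have probability zero'' is not the right justification for why requiring strict dominance costs nothing — $C_1 = C_m$ occurs with positive probability even when all $X$-coordinates are distinct — but since you bound $\mathbb{P}(C_1 \le C_m)$ directly, which subsumes the tie event, the conclusion is unaffected.
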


\begin{proof}
For a fixed value of $v = \overline{v}$, the optimal choice is to set 
$$\sum_{k=1}^d q(i,j,k) c^{ij}_k = \max \left\{0, 1 - \sum_{k=1}^d q(i,j,k) \overline{v}_k \right \}$$ for $i,j$ such that $Y_i > Y_j$ and $\sum_{k=1}^d q(i,j,k) \geq 1$, with $c_k^{ij} = 0$ whenever $q(i,j,k) = 0$. Note that $\sum_{k=1}^d q(i,j,k) c^{ij}_k  = \sum_{k=1}^d c^{ij}_k$. Therefore, the objective function is equal to 
\begin{align*}
z(\overline{v}) &\triangleq \sum_{i=1}^n \sum_{j=1}^n \mathbbm{1} \left\{Y_i > Y_j, X_i \not \preceq X_j\right\}\max \left\{0, 1 - \sum_{k=1}^d q(i,j,k) \overline{v}_k \right \}.
\end{align*}
Let $v^{\star} = e_1$ be the indicator for the active coordinate. For a particular coordinate $j \neq 1$, we will consider all feasible solutions $\overline{v} = v^{\star} + u$, where $\overline{v}_j  = \max_{k \in \{1, \dots, d\}} \overline{v}$. We will show that $z(v^{\star}) < z(\overline{v})$ for all such $\overline{v}$, with high probability. Now,
\small
\begin{align*}
&z(\overline{v}) - z(v^{\star})  \\
&= \sum_{i=1}^n\sum_{j=1}^n \mathbbm{1} \left\{Y_i > Y_j, X_i \not \preceq X_j\right\} \left(\max \left\{0, 1 - \sum_{k=1}^d q(i,j,k) \overline{v}_k \right \} - \max \left\{0, 1 - \sum_{k=1}^d q(i,j,k) v^{\star}_k \right \}  \right).
\end{align*}
\normalsize
We have
\begin{align*}
 \max \left\{0, 1 - \sum_{k=1}^d q(i,j,k) v^{\star}_k \right \} &= \begin{cases}
 0 & \text{if } q(i,j,1) = 1\\
 1 & \text{if } q(i, j, 1) = 0
\end{cases}
\end{align*}
and
\begin{align*}
 \max \left\{0, 1 - \sum_{k=1}^d q(i,j,k) \overline{v}_k \right \} &= \begin{cases}
 -u_1 - \sum_{k \neq 1} q(i,j,k)u_k & \text{if } q(i,j,1) = 1\\
 1 - \sum_{k \neq 1} q(i,j,k)u_k & \text{if } q(i, j, 1) = 0
\end{cases}
\end{align*}
Therefore, 
\begin{align*}
&\max \left\{0, 1 - \sum_{k=1}^d q(i,j,k) \overline{v}_k \right \} -  \max \left\{0, 1 - \sum_{k=1}^d q(i,j,k) v^{\star}_k \right \}\\
 &= \begin{cases}
 -u_1 - \sum_{k \neq 1} q(i,j,k)u_k & \text{if } q(i,j,1) = 1\\
 - \sum_{k \neq 1} q(i,j,k)u_k & \text{if } q(i, j, 1) = 0
\end{cases}\\
&= - \sum_{k=1}^d q(i,j,k)u_k
\end{align*}
Substituting,
\begin{align*}
z(\overline{v}) - z(v^{\star}) &= -\sum_{i=1}^n \sum_{j=1}^n \mathbbm{1} \left\{Y_i > Y_j, X_i \not \preceq X_j\right\} \left(\sum_{k=1}^d q(i,j,k)u_k \right)\\
&= - \sum_{k=1}^p u_k \sum_{i=1}^n \sum_{j=1}^n \mathbbm{1} \left\{Y_i > Y_j, X_i \not \preceq X_j\right\}  q(i,j,k)\\
&= - \sum_{k=1}^d u_k \sum_{i=1}^n \sum_{j=1}^n \mathbbm{1} \left\{Y_i > Y_j\right\}  q(i,j,k)\\
&= -u_1 \sum_{i=1}^n \sum_{j=1}^n \mathbbm{1} \left\{Y_i - Y_j > t\right\}  q(i,j,1) - \sum_{k \neq 1} \sum_{i=1}^n \sum_{j=1}^n \mathbbm{1} \left\{Y_i - Y_j > t\right\}  q(i,j,k)\\
&= \sum_{k \neq 1} u_k \sum_{i=1}^n \sum_{j=1}^n \mathbbm{1} \left\{Y_i > Y_j\right\}  q(i,j,1) - \sum_{k \neq 1} u_k\sum_{i=1}^n \sum_{j=1}^n \mathbbm{1} \left\{Y_i > Y_j\right\}  q(i,j,k)\\
&= \sum_{k \neq 1} u_k \sum_{i=1}^n \sum_{j=1}^n \mathbbm{1} \left\{Y_i > Y_j\right\}  \left(q(i,j,1) - q(i,j,k)\right)
\end{align*}
We show that this quantity is greater than zero with high probability, by concentration.
\small
\begin{align*}
&\mathbb{E}_{X, W} \left[z(\overline{v}) - z(v^{\star}) \right] \\
&= \sum_{k \neq 1} u_k \sum_{i=1}^n \sum_{j=1}^n \mathbb{E} \left[\mathbbm{1} \left\{Y_i > Y_j\right\}  \left(q(i,j,1) - q(i,j,k)\right) \right]\\
&= \sum_{k \neq 1} u_k \sum_{i=1}^n \sum_{j=1}^n \left(\mathbb{P} \left( Y_i > Y_j | q(i,j,1) = 1, q(i,j,k) = 0 \right) - \mathbb{P} \left( Y_i > Y_j | q(i,j,1) = 0, q(i,j,k) = 1 \right) \right)\\
&=  n(n-1)\sum_{k \neq 1} u_k  \left(\mathbb{P} \left( Y_1 > Y_2 | q(1,2,1) = 1, q(1,2,k) = 0 \right) - \mathbb{P} \left( Y_1 > Y_2 | q(1,2,1) = 0, q(1,2,k) = 1 \right) \right)\\
&=  n(n-1)\sum_{k \neq 1} u_k  \left(\mathbb{P} \left( Y_1 > Y_2 | q(1,2,1) = 1, q(1,2,k) = 0 \right) - \mathbb{P} \left( Y_1 > Y_2 | q(2,1,1) = 1, q(2,1,k) = 0 \right) \right)\\
&=  n(n-1)\sum_{k \neq 1} u_k  \left(\mathbb{P} \left( Y_1 > Y_2 | q(1,2,1) = 1, q(1,2,k) = 0 \right) - \mathbb{P} \left( Y_2 > Y_1 | q(1,2,1) = 1, q(1,2,k) = 0 \right) \right)
\end{align*}
\normalsize
Note that we can drop the conditioning on $q(1,2,k) = 0$ because of the uniform distribution of $X_1$ and $X_2$. Continuing,
\begin{align*}
\mathbb{E}_{X, W} \left[z(\overline{v}) - z(v^{\star}) \right] &=  n(n-1)\sum_{k \neq 1} u_k  \left(\mathbb{P} \left( Y_1 > Y_2 | q(1,2,1) = 1 \right) - \mathbb{P} \left( Y_2 > Y_1 | q(1,2,1) = 1 \right) \right)\\
&=  n(n-1) p_1 \sum_{k \neq 1} u_k  \\
&\geq \frac{p_1}{2} n(n-1)
\end{align*}
Observe that changing any one of the $X_i$ or $W_i$ variables can change the value of $z(\overline{v}) - z(v^{\star})$ by at most $2(n-1)$ in absolute value. Applying the McDiarmid inequality gives
\begin{align*}
\mathbb{P} \left( z(\overline{v}) - z(v^{\star}) \leq 0 \right)& \leq \exp\left(-2 \frac{\left(\mathbb{E}_{X, W} \left[z(\overline{v}) - z(v^{\star}) \right]\right)^2}{2n(2(n-1))^2} \right) \\
&\leq \exp\left(-2 \frac{\frac{1}{4}n^2(n-1)^2 p_{1}^2}{8 n(n-1)^2} \right)\\
&= \exp\left(-\frac{np_{1}^2}{16} \right).
\end{align*}
Using the Union Bound, the probability that the optimal solution to the LP gives $A(v) \neq \{1\}$ is at most
 \[  (d-1)\exp\left(-\frac{np_{1}^2}{16} \right). \]
\end{proof}

\begin{proof}[Proof of Theorem \ref{thm:multiple-coordinates}]

First, we show that $k_1 \in \{1, \dots, s\}$ with high probability. We show that for all $i \in \{1, \dots, s\}$ and $j \not \in \{1, \dots, s\}$, any solution $\overline{v}$ such that $\overline{v}_j = \max_{k \in \{1, \dots, d\}} \overline{v}_k$ satisfies $z(\overline{v}) > z(e_i)$ with high probability. 

Let $v^{\star} = e_i$ and write $\overline{v} = v^{\star} + u$. Then adapting the result of Lemma \ref{lemma:one-coordinate}, it holds that
\begin{align*}
\mathbb{P} \left( z(\overline{v}) - z(v^{\star}) \leq 0 \right)&\leq \exp\left(-\frac{n p_{i}^2}{16} \right)
\end{align*}
Therefore, the probability of an error in the coordinate $k_1$ is at most
\[ (d-s)\sum_{i =1}^s \exp\left(-\frac{n p_{i}^2}{16} \right).\]
Now condition on the correctness of $k_1$. We show that for all $i \in \{1, \dots, s\} \setminus \{k\}$ and $j \not \in \{1, \dots, s\}$, the coordinate $k_2$ is correct with high probability. Repeating the argument, the probability of an error in coordinate $k_2$ conditioned on $k_1$ being correct is at most 
\[ (d-s)\sum_{1 \leq i \leq s, i \neq k_1} \exp\left(-\frac{n p_{i}^2}{16} \right).\]
Continuing the analysis, the probability that $k_i$ is incorrect given that $k_1, \dots, k_{i-1}$ are correct is upper bounded by 
\[ (d-s) \sum_{1 \leq i \leq s, i \neq k_1, \dots, k_{i-1}} \exp\left(-\frac{np_{i}^2}{16} \right).\]
Recalling the assumption that $p_1 \leq \dots \leq p_s$, we conclude that the probability that $B \neq A$ is upper bounded by 
\begin{align*}
(d-s) \sum_{k=1}^s (s+1 -k) \exp\left(-\frac{n p_{k}^2}{16} \right)
\end{align*}
Therefore, $B = A$ with probability at least 
\[1 - (d-s)\sum_{k=1}^s (s+1 -k) \exp\left(-\frac{n p_{k}^2}{16} \right) \]
\end{proof}

\begin{proof}[Proof of Corollary \ref{corollary:recovery}]
\begin{align}
(d-s)\sum_{k=1}^s (s+1 -k) \exp\left(-\frac{n p_{k}^2}{16} \right) &\leq d^3 e^{-\frac{n p_1^2}{16}} \nonumber \\
&= e^{3 \log(d) - \frac{n p_1^2}{16}} \label{eq:expression}
\end{align}
Therefore, if $n = \omega(\log(d))$, then \eqref{eq:expression} goes to zero.
\end{proof}

\begin{proof}[Proof of Corollary \ref{corollary:two-stage-noisy-output}]
Support recovery fails with probability at most  \[(d-s)\sum_{k=1}^s (s+1 -k) \exp\left(-\frac{n p_{k}^2}{16} \right). \] If it succeeds, the probability of the $L_2$ norm error exceeding $\delta$ is upper bounded by the value in Theorem \ref{thm:statistical-consistency}, with $d$ set to $s$. Then $\mathbb{P} \left( \Vert \hat{f}_n - f \Vert_2^2 > \delta \right)$ is at most
\begin{align*}
&(d-s)\sum_{k=1}^s (s+1 -k) \exp\left(-\frac{n p_{k}^2}{16} \right)\\
&+ 6\exp\left\{\left(\left \lceil \frac{2^{11}}{\epsilon^2} \right \rceil - 1 \right) \left(2^s + 2\log(2) -1 \right) n^{\frac{s-1}{s}}  {-\frac{3 \epsilon^3 n}{41 \times 2^{10}}} \right\}.
\end{align*}
Therefore, if $n = \omega(\log(d))$ and $n = e^{\omega(s^2)}$, the estimator is consistent, by Corollaries \ref{corollary:noisy-output-statistical-consistency} and \ref{corollary:recovery}.
\end{proof}

\clearpage
\section{Proofs for the Noisy Input Model}

\begin{proof}[Proof of Theorem \ref{thm:consistency-binary}]
To illustrate the proof idea, we show the claim for $d = s =1$ first. Observe that for any monotone partition $(S_0, S_1)$ in $\mathbb{R}$, either $S_0 = \{x : x \leq r\}$ or $S_0 = \{x : x < r\}$ for some $r$. When $d = s =1$, the optimization problem \eqref{eq:simultaneous-first}-\eqref{eq:simultaneous-last} amounts to finding a boundary $r \in \mathbb{R}$.
Let
\[g\left(X_{1:n}, W_{1:n}; (S_0, S_1) \right) = \sum_{i=1}^n \mathbbm{1}\left\{f(X_i + W_i) = 1, X_i  \in S_0 \right\} + \mathbbm{1}\left\{f(X_i + W_i) = 0, X_i \in S_1 \right\}  \]
denote the corresponding value of the objective function. Observe that the value of $g\left(X_{1:n}, W_{1:n} ; (S_0, S_1)\right)$ can change by at most $\pm 2$ when any one of the random variables is changed. Applying the McDiarmid inequality, for all $\epsilon > 0$, it holds that
\begin{align*}
\mathbb{P} \left( g\left(X_{1:n}, W_{1:n}; (S_0, S_1) \right) - \mathbb{E} \left[g\left(X_{1:n}, W_{1:n}; (S_0, S_1) \right) \right]  \geq \epsilon n \right) &\leq  \exp\left(- \frac{2 \epsilon^2 n^2}{2n \cdot 2^2} \right)\\
&=  \exp\left(- \frac{ \epsilon^2 n}{4} \right).
\end{align*}
Similarly, 
\begin{align*}
\mathbb{P} \left( g\left(X_{1:n}, W_{1:n}; (S_0, S_1) \right) - \mathbb{E} \left[g\left(X_{1:n}, W_{1:n}; (S_0, S_1) \right) \right]  \leq -\epsilon n \right) &\leq \exp\left(- \frac{ \epsilon^2 n}{4} \right).
\end{align*}
We now calculate $\mathbb{E} \left[g\left(X_{1:n}, W_{1:n}; (S_0, S_1) \right) \right]$:
\begin{align*}
\mathbb{E} \left[g\left(X_{1:n}, W_{1:n}; (S_0, S_1) \right) \right] &= n \left[p \int_{t \in S_1} h_0(t) dt + (1-p) \int_{t \in S_0} h_1(t) dt \right]\\
&= n \left[p H_0(S_1) + (1-p) H_1(S_0) \right] \\
&= n \cdot q(S_0, S_1).
\end{align*}
By Assumption \ref{assumption:unique-minimizer}, the expectation has a unique minimizer $(S_0^{\star}, S_1^{\star}) \in \mathcal{M}_1$.

Observe that
\begin{align*}
\mathbb{P}\left( \Vert \hat{f}_n - f \Vert_2^2 > \delta\right) &= \mathbb{P} \left(D\left((S_0, S_1), (S_0^{\star}, S_1^{\star})\right) > \delta\right)\\
&= \mathbb{P} \left( (S_0, S_1) \not \in B_{\delta}(S_0^{\star}, S_1^{\star})\right)
\end{align*}
We therefore need to analyze the probability that there exists a monotone partition outside $B_{\delta}(S_0^{\star}, S_1^{\star}))$ with a smaller value of $g$ than $g\left(X_{1:n}, W_{1:n}; (S_0^{\star}, S_1^{\star}) \right)$. For all $(S_0, S_1) \in \mathcal{M}_1$, 
\begin{align*}
\mathbb{E} \left[g\left(X_{1:n}, W_{1:n}; (S_0, S_1)\right) \right] - \mathbb{E} \left[g\left(X_{1:n}, W_{1:n}; (S_0^{\star}, S_1^{\star}) \right) \right] &= n \left(q(S_0, S_1) - q\left(S_0^{\star}, S_1^{\star} \right) \right)
\end{align*}
We now use the concentration result with $\epsilon$ set to $\frac{1}{3} \left(q(S_0, S_1) - q\left(S_0^{\star}, S_1^{\star} \right) \right)$. For any $(S_0, S_1)$, with probability at least 
\[1 - \exp\left(- \frac{ \left(q(S_0, S_1) - q\left(S_0^{\star}, S_1^{\star} \right) \right)^2 n}{36} \right),\]
it holds that 
\[g\left(X_{1:n}, W_{1:n}; (S_0, S_1) \right) \geq  \mathbb{E} \left[g\left(X_{1:n}, W_{1:n}; (S_0, S_1) \right) \right] - \frac{n}{3}\left(q(S_0, S_1) - q\left(S_0^{\star}, S_1^{\star} \right) \right).\] 
Similarly, with the same probability, it holds that 
\[g\left(X_{1:n}, W_{1:n}; (S_0^{\star}, S_1^{\star}) \right) \leq  \mathbb{E} \left[g\left(X_{1:n}, W_{1:n}; (S_0^{\star}, S_1^{\star}) \right) \right] + \frac{n}{3}\left(q(S_0, S_1) - q\left(S_0^{\star}, S_1^{\star} \right) \right).\]
For a given $(S_0, S_1) \neq (S_0^{\star}, S_1^{\star})$, both of these events occur with probability at least \[1 - 2 \exp\left(- \frac{ \left(q(S_0, S_1) - q\left(S_0^{\star}, S_1^{\star} \right) \right)^2 n}{36} \right).\] In that case, 
\begin{align*}
&g\left(X_{1:n}, W_{1:n}; (S_0, S_1) \right)  - g\left(X_{1:n}, W_{1:n}; (S_0^{\star}, S_1^{\star}) \right) \\
&\geq \mathbb{E} \left[g\left(X_{1:n}, W_{1:n}; (S_0, S_1)\right) \right] - \frac{n}{3} \left(q(S_0, S_1) - q\left(S_0^{\star}, S_1^{\star} \right) \right) \\
&~~-  \mathbb{E} \left[g\left(X_{1:n}, W_{1:n}; (S_0^{\star}, S_1^{\star})\right) \right] - \frac{n}{3} \left(q(S_0, S_1) - q\left(S_0^{\star}, S_1^{\star}\right) \right)\\
&= n \left(q(S_0, S_1) - q\left(S_0^{\star}, S_1^{\star}\right) \right) - \frac{2n}{3} \left(q(S_0, S_1) - q\left(S_0^{\star}, S_1^{\star}\right) \right)\\
&= \frac{n}{3} \left(q(S_0, S_1) - q\left(S_0^{\star}, S_1^{\star}\right) \right).
\end{align*}
Therefore, in this situation, solution $(S_0, S_1)$ is suboptimal compared to solution $(S_0^{\star}, S_1^{\star})$. 

Observe that the cardinality of the set $\{g(X_{1:n}, W_{1:n}; (S_0, S_1)) : (S_0, S_1) \in \mathcal{M}_d\}$ is at most $n+1$. In other words, $g$ has at most $n+1$ possible values when we range over all possible monotone partitions.
Recall the definition of $q_{\text{min}}(\delta) = \min_{(S_0, S_1) \not \in B_{\delta}(S_0^{\star}, S_1^{\star})} q(S_0, S_1)$. 
By the previous analysis and the Union Bound, 
\begin{align*}
 \mathbb{P} \left( (S_0, S_1) \not \in B_{\delta}(S_0^{\star}, S_1^{\star})\right) &\leq (n+2) \exp\left(- \frac{ \left(q_{\text{min}}(\delta_1, \delta_2) - q\left(S_0^{\star}, S_1^{\star} \right) \right)^2 n}{36} \right)
\end{align*}
Therefore, with probability at least 
\begin{align*}
1- (n+2) \exp\left(- \frac{ \left(q_{\text{min}}(\delta_1, \delta_2) - q\left(S_0^{\star}, S_1^{\star} \right) \right)^2 n}{36} \right),
\end{align*}
it holds that $\Vert \hat{f}_n - f \Vert_2^2 \leq \delta$.


For $d \geq 2$ and $(S_0, S_1) \in \mathcal{M}_d$, let
\[g\left(X_{1:n}, W_{1:n}; (S_0, S_1) \right) = \sum_{i=1}^n \mathbbm{1}\left\{f(X_i + W_i) = 1, X_i  \in S_0 \right\} + \mathbbm{1}\left\{f(X_i + W_i) = 0, X_i \in S_1 \right\}.  \]
The function $g$ represents the error associated with partition $(S_0, S_1)$. Applying the McDiarmid inequality, 
\begin{align*}
\mathbb{P} \left( g\left(X_{1:n}, W_{1:n}; (S_0, S_1) \right) - \mathbb{E} \left[g\left(X_{1:n}, W_{1:n}; (S_0, S_1) \right) \right]  \geq \epsilon n \right) &\leq \exp\left(- \frac{ \epsilon^2 n}{4} \right)
\end{align*}
and 
\begin{align*}
\mathbb{P} \left( g\left(X_{1:n}, W_{1:n}; (S_0, S_1) \right) - \mathbb{E} \left[g\left(X_{1:n}, W_{1:n}; (S_0, S_1) \right) \right]  \leq -\epsilon n \right) &\leq \exp\left(- \frac{ \epsilon^2 n}{4} \right)
\end{align*}
Calculating the expectation,
\begin{align*}
\mathbb{E} \left[g\left(X_{1:n}, W_{1:n}; (S_0, S_1) \right) \right] &= n \left[p \int_{t \in S_1} h_0(t) dt + (1-p) \int_{t \in S_0} h_1(t) dt \right]\\
&= n \left[p H_0(S_1) + (1-p) H_1(S_0) \right] \\
&= n \cdot q(S_0, S_1).
\end{align*}
By Assumption \ref{assumption:unique-minimizer}, the function $q(S_0, S_1)$ has a unique minimizer, $(S_0^{\star}, S_1^{\star})$, that corresponds to the true function $f$. Therefore, if $\Vert \hat{f}_n - f \Vert_2^2$ is greater than $\delta$, then the function $\hat{f}_n$ must be outside of $B_{\delta}(S_0^{\star}, S_1^{\star})$. Then it must be the case that some $(S_0, S_1)$ outside of $B_{\delta}(S_0^{\star}, S_1^{\star})$ attained a lower value of $g$ than $g\left(X_{1:n}, W_{1:n}; (S_0^{\star}, S_1^{\star}) \right)$. We use concentration to upper bound the probability of this event.

First, we need to know how many possible objective values there are. This is upper bounded by the number of binary labelings of the set $\{X_1, \dots, X_n\}$. By Theorem \ref{thm:labeling-number-bounds}, it holds that 
\begin{align*}
\mathbb{E}[L(X_1, \dots X_n)] \leq \exp \left[ \left(2^s + 2\log(2) -1 \right) n^{\frac{s-1}{s}}\right].
\end{align*}
For any $\epsilon > 0$, the Markov inequality tells us that
\begin{align*}
\mathbb{P} \left(L(X_1, \dots X_n) \geq  t \right) &\leq \frac{\mathbb{E}[L(X_1, \dots X_n)]}{t}\\
&\leq \frac{ \exp \left[ \left(2^s + 2\log(2) -1 \right) n^{\frac{s-1}{s}}\right] }{t}.
\end{align*}
Setting $t =  \exp \left[n^{\frac{2s-1}{2s}}\right]$,
\begin{align*}
\mathbb{P} \left(L(X_1, \dots X_n) \geq  \exp \left[n^{\frac{2s-1}{2s}}\right] \right) &\leq \frac{ \exp \left[ \left(2^s + 2\log(2) -1 \right) n^{\frac{s-1}{s}}\right] }{\exp \left[ n^{\frac{2s-1}{2s}}\right]}.
\end{align*}
Therefore, with probability at least $1 - \frac{ \exp \left[ \left(2^s + 2\log(2) -1 \right) n^{\frac{s-1}{s}}\right] }{\exp \left[ n^{\frac{2s-1}{2s}}\right]}$, there are at most $ \exp \left[ n^{\frac{2s-1}{2s}}\right]$ labelings, and therefore function values. We bound the $L_2$ loss similarly to the proof for the case $d= s=1$, above. Recall that $q_{\text{min}}(\delta) =  \min_{(S_0, S_1) \not \in B_{\delta}(S_0^{\star}, S_1^{\star})} q(S_0, S_1)$. Set $\epsilon = \frac{1}{3} \left(q_{\text{min}}(\delta) - q(S_0^{\star}, S_1^{\star}) \right)$ in the McDiarmid bound so that the optimal value remains separated from the alternatives.
\begin{align*}
&\mathbb{P} \left( \Vert \hat{f} - f \Vert_2^2 > \delta \right) \\
&= \mathbb{P} \left( (S_0, S_1) \not \in   \left(B_{\delta}(S_0^{\star}, S_1^{\star}) \right)\right) \\
&\leq  \frac{ \exp \left[ \left(2^s + 2\log(2) -1 \right) n^{\frac{s-1}{s}}\right] }{\exp \left[n^{\frac{2s-1}{2s}}\right]} + \left(\exp \left[n^{\frac{2s-1}{2s}}\right] + 1\right) \exp\left(- \frac{ \left(q_{\text{min}}\left(\delta \right) - q\left(S_0^{\star}, S_1^{\star}\right) \right)^2 n}{36} \right)
\end{align*}
\end{proof}

\begin{proof}[Proof of Corollary \ref{corollary:noisy-input}]
We equivalently show that $s = o \left(\sqrt{\log(n)}\right)$ is sufficient. Analyzing the first term,
\begin{align*}
\exp \left\{n \left( n^{s \log_n(2)} + 2 \log(2) - 1 - n^{\frac{1}{2s}}\right) n^{-\frac{1}{s}} \right\} &\leq  \exp \left\{n^{1-\frac{1}{s}} \left( n^{s \log_n(2)} - n^{\frac{1}{2s}} + \frac{1}{2} \right)  \right\} \\
&=  \exp \left\{n^{1-\frac{1}{2s}} \left( n^{s \log_n(2) - \frac{1}{2s}} - 1 + \frac{1}{2}n^{-\frac{1}{2s}} \right)  \right\} \\
&\leq  \exp \left\{n \left( n^{s \log_n(2) - \frac{1}{2s}} - \frac{1}{2} \right)  \right\} \\
&=  \exp \left\{n \left( n^{\frac{1}{s} \left( s^2 \log_n(2) - \frac{1}{2}\right)} - \frac{1}{2} \right)  \right\} \\
&=  \exp \left\{n \left( n^{\frac{1}{s} \left( o(1)- \frac{1}{2}\right)} - \frac{1}{2} \right)  \right\} \\
&=  \exp \left\{n \left( n^{- \Theta(1) \frac{1}{s} } - \frac{1}{2} \right)  \right\} \\
&=  \exp \left\{n \left( n^{- \omega \left(\frac{1}{\sqrt{\log(n)}} \right)} - \frac{1}{2} \right)  \right\} \\
&=  \exp \left\{n \left( o \left(n^{-  \frac{1}{\sqrt{\log(n)}} }\right) - \frac{1}{2} \right)  \right\} \\
&=  \exp \left\{n \left( o \left(n^{-  \frac{1}{\log(n)} }\right) - \frac{1}{2} \right)  \right\} \\
&=  \exp \left\{n \left( o \left(n^{-  \frac{\log_n(2)}{\log(2)} }\right) - \frac{1}{2} \right)  \right\} \\
&=  \exp \left\{n \left( o \left(2^{-  \frac{1}{\log(2)} }\right) - \frac{1}{2} \right)  \right\} \\
&=  \exp \left\{n \left( o \left(1\right) - \frac{1}{2} \right)  \right\} \\
&=  \exp \left\{-\Theta(1)n  \right\}
\end{align*}
We have assumed that the expression $\left(q_{\text{min}}\left(\delta \right) - q\left(S_0^{\star}, S_1^{\star}\right) \right)^2$ is constant in $s$. Analyzing the second term,
\begin{align*}
\exp \left[n^{\frac{2s-1}{2s}}\right] \exp\left(- \frac{ \left(q_{\text{min}}\left(\delta \right) - q\left(S_0^{\star}, S_1^{\star}\right) \right)^2 n}{36} \right) &= \exp \left \{n \left(n^{-\frac{1}{2s}} - \Theta(1)\right) \right \}\\
&= \exp \left \{n \left(n^{-\frac{1}{2 o \left(\sqrt{\log(n)}\right)}} - \Theta(1)\right) \right \}\\
&= \exp \left \{n \left(o \left(n^{-\frac{1}{2 \sqrt{\log(n)}}}\right) - \Theta(1)\right) \right \}\\
&= \exp \left \{n \left(o(1) - \Theta(1)\right) \right \}\\
&=  \exp \left\{-\Theta(1)n  \right\}
\end{align*}
\end{proof}

We provide an analogue of Theorem \ref{thm:consistency-binary} in the sparse setting ($s < d$). First we need some definitions, similar to those that precede Theorem \ref{thm:consistency-binary}. We write $x =_A y$ if $x \preceq_A y$ and $x \succeq_A y$.
\begin{definition}[$s$-Sparse Monotone Partition]
We say that $(S_0, S_1)$ is an \emph{$s$-sparse monotone partition} of $\mathbb{R}^d$ if
\begin{enumerate}
\item $S_0$ and $S_1$ form a partition of $\mathbb{R}^d$. That is, $S_0 \cup S_1 = \mathbb{R}^d$ and $S_0 \cap S_1 = \emptyset$.
\item There exists a set $A \subset [d]$ such that for all $x, y \in \mathbb{R}^d$, if $x \preceq_{A} y$, then either (i) $x,y \in S_0$, (ii) $x, y \in S_1$, or (iii) $x \in S_0, y \in S_1$. Note that this implies that if $x =_{A} y$, then either $x,y \in S_0$ or $x,y \in S_1$.
\end{enumerate}
Let $\mathcal{M}_{s,d}$ be the set of all $s$-sparse monotone partitions of $\mathbb{R}^d$.
\end{definition}
Note that there is a one-to-one correspondence between monotone partitions and $s$-sparse binary coordinate-wise monotone functions.

Let $Y = f(X+W)$ represent our model, with $d < s$, and with $f$ corresponding to an $s$-sparse monotone partition $(S_0^{\star}, S_1^{\star})$. That is, $f(x) = 0$ for $x \in S_0^{\star}$ and $f(x) = 1$ for $x \in S_1^{\star}$.  Let $h_0(x)$ be the probability density function of $X$, conditional on $Y = 0$. Similarly, let $h_1(x)$ be the probability density function of $X$, conditional on $Y = 1$. For $(S_0, S_1) \in \mathcal{M}_{s,d}$, let 
\begin{align*}
H_0(S_1) = \int_{z \in S_1} h_0(z) dz ~\text{ and } ~H_1(S_0) = \int_{z \in S_0} h_1(z) dz.
\end{align*}
Finally, let $p$ be the probability that $Y = 0$. Let 
\[q(S_0, S_1) \triangleq p H_0(S_1) + (1-p) H_1(S_0).\]
The value of $q(S_0, S_1)$ is the probability of misclassification, under the $s$-sparse monotone partition $(S_0, S_1)$. 

\begin{assumption}\label{assumption:unique-minimizer-sparse}
We assume that $q$ has a unique minimizer on $\mathcal{M}_{s,d}$, which is $(S_0^{\star}, S_1^{\star})$. 
\end{assumption}

\begin{definition}[Discrepancy]
For two $s$-sparse monotone partitions $(S_0, S_1)$ and $(S_0', S_1')$, the discrepancy function $D: \mathcal{M}_{s,d} \times \mathcal{M}_{s,d} \to [0,1]$ is defined as follows.
\[D\left((S_0, S_1), (S_0', S_1')\right) \triangleq \mathbb{P}\left(X \in S_0 \cap S_1' \right) + \mathbb{P}\left(X \in S_0' \cap S_1\right)  \]
Also let
\[B_{\delta}^s \left(S_0^{\star}, S_1^{\star}\right) \triangleq \{(S_0, S_1) \in \mathcal{M}_{s,d} :  D\left((S_0, S_1), (S_0^{\star}, S_1^{\star})\right)\leq \delta \}\] 
be the set of $s$-sparse monotone partitions with discrepancy at most $\delta$ from $(S_0^{\star}, S_1^{\star})$.
\end{definition}

\begin{theorem}\label{thm:noisy-input-sparse}
Suppose Assumption \ref{assumption:unique-minimizer-sparse} holds, and the components of $W$ are independent. Let $\hat{f}_n$ be the estimator derived from Algorithm \ref{alg:sequential-recovery}, and let 
\[q_{\text{min}}(\delta) \triangleq \min \left\{q(S_0, S_1): (S_0, S_1) \not \in B_{\delta}^s(S_0^{\star}, S_1^{\star})\right\} > q(S_0^{\star}, S_1^{\star}).\] 
Then for any $0 < \delta \leq 1$, 
\begin{align*}
&\mathbb{P} \left( \Vert \hat{f} - f \Vert_2 >\delta \right) \leq\\
&\frac{ \exp \left[ \left(2^s + 2\log(2) -1 \right) n^{\frac{s-1}{s}}\right] }{\exp \left[n^{\frac{2s-1}{2s}}\right]} +\left(\binom{d}{s}\exp \left[n^{\frac{2s-1}{2s}}\right] + 1\right) \exp\left(- \frac{ \left(q_{\text{min}}\left(\delta\right) - q\left(S_0^{\star}, S_1^{\star} \right) \right)^2 n}{36} \right).
\end{align*}
\end{theorem}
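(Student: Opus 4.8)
The plan is to follow the proof of Theorem~\ref{thm:consistency-binary} almost verbatim; the only genuinely new point is that the number of achievable empirical objective values is larger in the sparse setting by a factor of at most $\binom{d}{s}$, and this factor must be threaded correctly through the Markov step.

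First, exactly as in Theorem~\ref{thm:consistency-binary} (and using Lemma~\ref{lemma:integer-solution}), the estimator $\hat f_n$ is a minimizer of
\[
g\left(X_{1:n}, W_{1:n}; (S_0,S_1)\right) = \sum_{i=1}^n \left( \mathbbm{1}\{f(X_i+W_i)=1,\, X_i\in S_0\} + \mathbbm{1}\{f(X_i+W_i)=0,\, X_i\in S_1\} \right)
\]
over all $s$-sparse monotone partitions $(S_0,S_1)\in\mathcal{M}_{s,d}$. Two facts carry over unchanged: $\mathbb{E}\!\left[g(X_{1:n},W_{1:n};(S_0,S_1))\right] = n\,q(S_0,S_1)$, and, since changing one $X_i$ or $W_i$ alters $g$ by at most $2$, McDiarmid's inequality gives $\mathbb{P}\!\left(\left|g(X_{1:n},W_{1:n};(S_0,S_1)) - n\,q(S_0,S_1)\right| \ge \epsilon n\right) \le 2\exp(-\epsilon^2 n/4)$ for every fixed partition. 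Moreover $g$ depends on $(S_0,S_1)$ only through the induced $0/1$ labeling $X_i \mapsto \mathbbm{1}\{X_i\in S_1\}$ of the sample, so $g$ attains at most $\widetilde L(X_{1:n})$ distinct values, where $\widetilde L(X_{1:n})$ is the number of $s$-sparse binary monotone labelings of $X_{1:n}$.

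The new ingredient is the bound $\mathbb{E}[\widetilde L(X_{1:n})] \le \binom{d}{s}\exp\!\left[(2^s + 2\log(2) - 1)n^{(s-1)/s}\right]$. This follows by writing $\mathcal{M}_{s,d} = \bigcup_{|A|=s}\mathcal{M}_A$, where $\mathcal{M}_A$ collects the partitions respecting $\preceq_A$: the labelings coming from $\mathcal{M}_A$ number at most the ordinary monotone labeling number of the projections $\pi_A(X_1),\dots,\pi_A(X_n)$, which are i.i.d.\ uniform on $[0,1]^s$, so Theorem~\ref{thm:labeling-number-bounds} applied in dimension $s$ bounds its expectation by $\exp[(2^s + 2\log(2)-1)n^{(s-1)/s}]$; summing over the $\binom{d}{s}$ choices of $A$ gives the claim. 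Applying Markov's inequality with threshold $t = \binom{d}{s}\exp[n^{(2s-1)/(2s)}]$ makes the $\binom{d}{s}$ cancel and yields exactly the first term of the asserted bound. On the complementary event there are at most $\binom{d}{s}\exp[n^{(2s-1)/(2s)}]$ values of $g$ attained by partitions outside $B_{\delta}^s(S_0^{\star},S_1^{\star})$; picking one representative partition per value, applying the McDiarmid estimate with $\epsilon = \tfrac13\!\left(q_{\text{min}}(\delta) - q(S_0^{\star},S_1^{\star})\right)$ to the upper deviation at $(S_0^{\star},S_1^{\star})$ and to the lower deviation at each representative, and taking a union bound over these at most $\binom{d}{s}\exp[n^{(2s-1)/(2s)}]+1$ events, we conclude — exactly as in the closing computation of Theorem~\ref{thm:consistency-binary} — that with the stated probability every partition outside $B_{\delta}^s(S_0^{\star},S_1^{\star})$ has strictly larger empirical misclassification count than the true one, so $\hat f_n \in B_{\delta}^s(S_0^{\star},S_1^{\star})$, i.e.\ $\Vert \hat f_n - f\Vert_2 \le \delta$. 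A final union bound combines the two contributions.

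The main obstacle is the bookkeeping of the $\binom{d}{s}$ factor: one must see that $g$ is a function of the induced labeling alone (so that the $s$-sparse labeling number is the right complexity measure), that $\mathcal{M}_{s,d}$ decomposes into $\binom{d}{s}$ copies of the dimension-$s$ class to which Theorem~\ref{thm:labeling-number-bounds} applies after projecting the uniform sample, and — the delicate point — that $\binom{d}{s}$ should be folded into the Markov threshold so that it cancels in the ratio (first) term while reappearing as the multiplier of $\exp[n^{(2s-1)/(2s)}]$ in the union-bound cardinality of the second term. Everything else is identical to the non-sparse argument.
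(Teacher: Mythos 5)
Your proposal is correct and follows essentially the same route as the paper: decompose $\mathcal{M}_{s,d}$ into $\binom{d}{s}$ copies of the dimension-$s$ class to bound the expected sparse labeling number by $\binom{d}{s}\exp\left[(2^s+2\log 2 - 1)n^{(s-1)/s}\right]$, fold the $\binom{d}{s}$ into the Markov threshold $t=\binom{d}{s}\exp\left[n^{(2s-1)/(2s)}\right]$ so it cancels in the first term while surviving in the union-bound count, and then repeat the McDiarmid-plus-union-bound argument from Theorem~\ref{thm:consistency-binary}. The only cosmetic difference is that you spell out the projection-to-$[0,1]^s$ step that justifies applying Theorem~\ref{thm:labeling-number-bounds} in dimension $s$, which the paper leaves implicit.
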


\begin{proof}
The proof is analogous to the proof of Theorem \ref{thm:consistency-binary}, with the above definition for the function $q$. Recall that in the proof of Theorem \ref{thm:consistency-binary}, we needed to upper bound the number of possible function values. Here, the number of possible function values is upper bounded by the number of $s$-sparse binary labelings, which are those labelings corresponding to $s$-sparse monotone partitions. Let $L_s(X_1, \dots X_n)$ be the number of $s$-sparse binary labelings.

By Theorem \ref{thm:labeling-number-bounds}, it holds that 
\begin{align*}
\mathbb{E}[L_s(X_1, \dots X_n)] \leq \binom{d}{s}\exp \left[ \left(2^s + 2\log(2) -1 \right) n^{\frac{s-1}{s}}\right].
\end{align*}
For any $\epsilon > 0$, the Markov inequality tells us that
\begin{align*}
\mathbb{P} \left(L_s(X_1, \dots X_n) \geq  t \right) &\leq \frac{\mathbb{E}[L_s(X_1, \dots X_n)]}{t}\\
&\leq \frac{ \binom{d}{s}\exp \left[ \left(2^s + 2\log(2) -1 \right) n^{\frac{s-1}{s}}\right] }{t}.
\end{align*}
Setting $t =  \binom{d}{s} \exp \left[n^{\frac{2s-1}{2s} }\right]$,
\begin{align*}
\mathbb{P} \left(L_s(X_1, \dots X_n) \geq  \binom{d}{s} \exp \left[n^{\frac{2s-1}{2s}}\right] \right) &\leq \frac{ \exp \left[ \left(2^s + 2\log(2) -1 \right) n^{\frac{s-1}{s}}\right] }{\exp \left[ n^{\frac{2s-1}{2s}}\right]}.
\end{align*}
Therefore, with probability at least $1 - \frac{ \exp \left[ \left(2^s + 2\log(2) -1 \right) n^{\frac{s-1}{s}}\right] }{\exp \left[ n^{\frac{2s-1}{2s}}\right]}$, there are at most $\binom{d}{s} \exp \left[ n^{\frac{2s-1}{2s}}\right]$ $s$-sparse binary labelings, and therefore function values.
\end{proof}

\begin{proof}[Proof of Corollary \ref{corollary:noisy-input-simultaneous}]
We have assumed that $s$ is constant. For fixed $(S_0, S_1)$, the value of $q(S_0, S_1)$ does not change if we increase the overall dimension, because of the uniformity of $X$ and the independence of the coordinates of $W$. Therefore, $q$ does not depend on $d$ when $s$ is fixed, and so $q_{\text{min}}\left(\delta\right) - q\left(S_0^{\star}, S_1^{\star} \right) = \Theta(1)$. We now analyze the bound in Theorem \ref{thm:noisy-input-sparse}. Since $s$ is constant, the first term goes to zero. Analyzing the second term,
\begin{align*}
&\left(\binom{d}{s}\exp \left[n^{\frac{2s-1}{2s}}\right] + 1\right) \exp\left(- \frac{ \left(q_{\text{min}}\left(\delta\right) - q\left(S_0^{\star}, S_1^{\star} \right) \right)^2 n}{36} \right)\\
&\leq  \left(\exp \left[s \log(d) + n^{\frac{2s-1}{2s}}\right]+ 1\right) \exp\left(- \frac{ \left(q_{\text{min}}\left(\delta\right) - q\left(S_0^{\star}, S_1^{\star} \right) \right)^2 n}{36} \right)\\
&=  \exp \left[s \log(d) + n^{\frac{2s-1}{2s}} - \Theta(1)n \right] + e^{-\Theta(1) n}
\end{align*}
If $n = \omega(\log(d))$, the second term goes to zero.
\end{proof}

The proof of Theorem \ref{thm:support-recovery-noisy-input-multiple} requires Lemmas \ref{lemma:support-recovery-noisy-input-probability} and \ref{lemma:support-recovery-noisy-input}.
\begin{lemma}\label{lemma:support-recovery-noisy-input-probability}
For all $k \in A$, it holds that $\overline{p}_{k} > 0$. 
\end{lemma}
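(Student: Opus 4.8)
The plan is to mirror the proof of Lemma~\ref{lemma:stochastically-increasing}, adapting it to the binary, noisy-input setting. First I would perform the same relabeling: sample the two pairs $(X_1,W_1),(X_2,W_2)$ i.i.d.\ and let $(X_+,W_+)$ be whichever pair has the larger $k$-th input coordinate and $(X_-,W_-)$ the other. Since the conditioning event $\{q(1,2,k)=1\}=\{X_{1,k}>X_{2,k}\}$ is measurable with respect to $(X_{1,k},X_{2,k})$ alone, while the remaining randomness $(X_1^{-k},W_1,X_2^{-k},W_2)$ is independent of it, conditioning on this event leaves $(X_+^{-k},W_+)$ and $(X_-^{-k},W_-)$ i.i.d.\ (each a block of uniform remaining coordinates together with a fresh noise vector), jointly independent of $(X_{+,k},X_{-,k})$, which is now distributed as the ordered pair $(U_{(2)},U_{(1)})$ of two i.i.d.\ uniforms on $[0,1]$. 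Writing $Y_\pm=f(X_\pm+W_\pm)$, this yields the unconditioned identity $\overline{p}_k=\mathbb{P}(Y_+=1,Y_-=0)-\mathbb{P}(Y_+=0,Y_-=1)$.

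Next I would reduce to threshold functions. Fix the remaining coordinates $X_+^{-k}$ and the noise $W_+$; since $k\in A$ and $f$ is monotone with respect to $\preceq_A$, the map sending the $k$-th input coordinate $u$ to $f(X_++W_+)$ is a non-decreasing $\{0,1\}$-valued function of $u$, hence a.e.\ equal to $\mathbbm{1}\{u\ge\tau_+\}$ for a threshold $\tau_+=\tau(X_+^{-k},W_+)$ (which absorbs $W_{+,k}$); likewise $Y_-=\mathbbm{1}\{X_{-,k}\ge\tau_-\}$ a.e.\ with $\tau_-=\tau(X_-^{-k},W_-)$. By the previous paragraph $\tau_+,\tau_-$ are i.i.d.\ and independent of $(X_{+,k},X_{-,k})$. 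Writing $a=X_{-,k}\le b=X_{+,k}$, this gives
\[
\overline{p}_k=\mathbb{E}_{\tau_+,\tau_-}\!\big[\mathbb{P}_{a,b}(b\ge\tau_+,\,a<\tau_-)\big]-\mathbb{E}_{\tau_+,\tau_-}\!\big[\mathbb{P}_{a,b}(b<\tau_+,\,a\ge\tau_-)\big].
\]
Because $\tau_+$ and $\tau_-$ are exchangeable, I would rewrite the second expectation by swapping their names, turning $\mathbb{P}_{a,b}(b<\tau_+,\,a\ge\tau_-)$ into $\mathbb{P}_{a,b}(a\ge\tau_+,\,b<\tau_-)$. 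Since $a\le b$, the event $\{a\ge\tau_+,\,b<\tau_-\}$ is contained in $\{b\ge\tau_+\}\cap\{a<\tau_-\}$, so after this swap the integrand is nonnegative for every value of $(\tau_+,\tau_-)$, and hence $\overline{p}_k\ge 0$.

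For strict positivity it suffices that $\mathbb{P}_{a,b}(b\ge\tau_+,a<\tau_-)-\mathbb{P}_{a,b}(a\ge\tau_+,b<\tau_-)>0$ with positive probability over $(\tau_+,\tau_-)$. Since $(a,b)=(U_{(1)},U_{(2)})$ has full support on the simplex $\{0\le a\le b\le 1\}$, an elementary computation shows that this difference vanishes only in degenerate configurations — namely when $\tau_-\le 0$, or $\tau_+\ge 1$, or ($\tau_+\le 0$ and $\tau_-\ge 1$) — which correspond to the coordinate-$k$ slice of $f$ through the relevant point being identically $1$ (resp.\ identically $0$). Exactly as in Lemma~\ref{lemma:stochastically-increasing}, the fact that $k$ is an active coordinate on which $f$ genuinely depends forces this slice to be non-constant with positive probability, i.e.\ $\tau_+\in(0,1)$ (and $\tau_-\in(0,1)$) with positive probability, so the difference is strictly positive on a set of positive measure and $\overline{p}_k>0$. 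The step requiring the most care is this last one — ruling out the degenerate threshold configurations and upgrading nonnegativity to strict positivity; the relabeling and the threshold reduction are essentially bookkeeping once the independence structure after conditioning is spelled out.
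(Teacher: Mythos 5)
Your proof is correct, and it takes a genuinely different (and more explicit) route than the paper's. The paper's proof mirrors Lemma~\ref{lemma:stochastically-increasing}: after the same $(X_+,X_-)$ relabeling, it asserts $\mathbb{P}\left(f(X_+)>f(X_-)\right)>\mathbb{P}\left(f(X_+)<f(X_-)\right)$ ``due to the monotonicity of $f$,'' and then invokes a ``coupling of $W_1,W_2$'' to transfer the inequality to the noisy comparison $f(X_++W_1)$ vs.\ $f(X_-+W_2)$; both steps are stated but not spelled out. Your argument instead reduces to one-dimensional threshold functions $\mathbbm{1}\{u\ge\tau_\pm\}$ with $\tau_\pm$ i.i.d.\ and independent of the ordered $k$-th coordinates $(a,b)$, and obtains $\overline p_k\ge 0$ cleanly from the nesting $\{a\ge\tau_+,b<\tau_-\}\subseteq\{b\ge\tau_+,a<\tau_-\}$ after an exchangeability swap. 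This is sharper than the paper's coupling step because it gives a pointwise (in $\tau_+,\tau_-$) nonnegativity, not merely an inequality of aggregate probabilities, and it makes transparent exactly where the noise enters (only through the threshold $\tau$). What the two proofs have in common is the unargued final step: strict positivity of $\overline p_k$ requires that the coordinate-$k$ slice of $f$ be non-constant on a set of positive probability, i.e.\ that $f$ genuinely depends on $k$. This does not follow from the definition of an $s$-sparse coordinate-wise monotone function (a constant $f$ satisfies it with $\overline p_k=0$), and the paper leaves the same gap when it asserts the strict inequality on the noiseless probabilities. Your write-up at least names this as the delicate step, which is an improvement; to be fully rigorous both proofs need an explicit assumption of genuine dependence on each active coordinate. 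One small cosmetic point: your characterization of the degenerate configurations as ``$\tau_-\le 0$, or $\tau_+\ge 1$, or ($\tau_+\le 0$ and $\tau_-\ge 1$)'' is correct, but the informal gloss ``the slice being identically $1$ (resp.\ $0$)'' only covers the first two cases; the third is a joint condition on both slices. Since you go on to argue via $\tau_+,\tau_-\in(0,1)$ with positive joint probability, the conclusion still follows.
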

\begin{proof}
We need to show that
\begin{align*}
\mathbb{P} \left( Y_1 = 1, Y_2 = 0 | X_{1,k} > X_{2,k} \right) > \mathbb{P} \left( Y_1 = 0, Y_2 = 1 | X_{1,k} > X_{2,k}  \right).
\end{align*}
The proof is similar to the proof of Lemma \ref{lemma:stochastically-increasing}.
Consider the following procedure. We sample $X_1$ and $X_2$ independently and uniformly on $[0,1]^d$. Fix $k \in A$. Let
\begin{align*}
X_+ = \begin{cases}
X_1 & \text{ if } X_{1,k} > X_{2,k}\\
X_2 &\text{ otherwise}
\end{cases}
\end{align*}
and
\begin{align*}
X_- = \begin{cases}
X_1 & \text{ if } X_{1,k} \leq X_{2,k}\\
X_2 &\text{ otherwise.}
\end{cases}
\end{align*}
In other words, $X_+$ is the right point according to coordinate $k$ and $X_-$ is the left point. As in the proof of Lemma \ref{lemma:stochastically-increasing}, we can equivalently define $\overline{p}_k$ as
\[\overline{p}_{k} =\mathbb{P} \left( f(X_+ + W_1)  > f(X_- + W_2) \right) - \mathbb{P} \left( f(X_+ + W_1) < f(X_- + W_2) \right)  .\]
Therefore, our goal is to show that
\begin{align*}
&\mathbb{P} \left( f(X_+ + W_1)  > f(X_- + W_2) \right) > \mathbb{P} \left( f(X_+ + W_1) < f(X_- + W_2) \right).
\end{align*}
Due to the monotonicity of $f$ with respect to $A \ni k$, it holds that
\begin{align*}
&\mathbb{P} \left( f(X_+)  > f(X_-) \right) > \mathbb{P} \left( f(X_+)  < f(X_-) \right).
\end{align*}
Coupling $W_1$ and $W_2$ across the events $\{f(X_+)  > f(X_-)\}$ and $\{f(X_+)  < f(X_-)\}$, we obtain 
\begin{align*}
&\mathbb{P} \left( f(X_+ + W_1)  > f(X_- + W_2) \right) > \mathbb{P} \left( f(X_+ + W_1) < f(X_- + W_2) \right).
\end{align*}
\end{proof}

\begin{lemma}\label{lemma:support-recovery-noisy-input}
Let $s=1$, and assume without loss of generality that $A = \{1\}$. Algorithm \ref{alg:sequential-recovery} (which is equivalent to Algorithm \ref{alg:simultaneous-recovery} when $s=1$) recovers the correct coordinate with probability at least
\[1- (d-1) \exp\left(-\frac{n \overline{p}_{1}^2}{16} \right).\]
\end{lemma}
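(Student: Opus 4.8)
The plan is to follow the proof of Lemma~\ref{lemma:one-coordinate} essentially verbatim, substituting the event $\{Y_i > Y_j\}$ by $\{Y_i = 1, Y_j = 0\}$ (these coincide since $Y$ is $\{0,1\}$-valued), the quantity $p_1$ by $\overline{p}_1$, and Lemma~\ref{lemma:stochastically-increasing} by Lemma~\ref{lemma:support-recovery-noisy-input-probability}. First I would observe that, for a fixed feasible $v = \overline{v}$ in the linear program solved by Algorithm~\ref{alg:sequential-recovery} (with $s=1$), the objective is minimized over the correction variables by setting $\sum_{k=1}^d q(i,l,k)\,c_k^{il} = \max\{0,\, 1 - \sum_{k=1}^d q(i,l,k)\overline{v}_k\}$ (with $c_k^{il} = 0$ whenever $q(i,l,k) = 0$) for each constrained pair, so that the objective becomes
\[
z(\overline{v}) = \sum_{i=1}^n \sum_{l=1}^n \mathbbm{1}\{Y_i = 1,\, Y_l = 0,\, X_i \not\preceq X_l\}\,\max\Bigl\{0,\, 1 - \textstyle\sum_{k=1}^d q(i,l,k)\overline{v}_k\Bigr\};
\]
feasibility ($\sum_k \overline{v}_k = 1$, $\overline{v}_k \geq 0$) forces $\sum_k q(i,l,k)\overline{v}_k \leq 1$, so the $\max$ is always attained by its second argument and may be dropped.

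Next I would set $v^{\star} = e_1$ and, for each coordinate $j \neq 1$, consider every feasible $\overline{v} = v^{\star} + u$ with $\overline{v}_j = \max_k \overline{v}_k$. The same algebraic cancellation as in Lemma~\ref{lemma:one-coordinate} gives
\[
z(\overline{v}) - z(v^{\star}) = \sum_{k \neq 1} u_k \sum_{i=1}^n \sum_{l=1}^n \mathbbm{1}\{Y_i = 1,\, Y_l = 0\}\bigl(q(i,l,1) - q(i,l,k)\bigr),
\]
while $\overline{v}_j = \max_k \overline{v}_k$ together with $\sum_k \overline{v}_k = 1$ forces $\sum_{k \neq 1} u_k \geq \tfrac12$. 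Taking expectations over $X$ and $W$, using the uniformity of $X$, the independence of the coordinates of $W$, the fact that conditioning on $q(i,l,k) = 0$ is vacuous for the inactive coordinate $k$, and the equivalent representation of $\overline{p}_1$ established in the proof of Lemma~\ref{lemma:support-recovery-noisy-input-probability}, I obtain
\[
\mathbb{E}_{X,W}\bigl[z(\overline{v}) - z(v^{\star})\bigr] = n(n-1)\,\overline{p}_1 \sum_{k \neq 1} u_k \;\geq\; \tfrac12\,\overline{p}_1\, n(n-1) \;>\; 0,
\]
the final strict inequality being exactly Lemma~\ref{lemma:support-recovery-noisy-input-probability}.

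Finally I would invoke bounded differences exactly as in Lemma~\ref{lemma:one-coordinate}: altering a single $X_i$ or $W_i$ changes $z(\overline{v}) - z(v^{\star})$ by at most $2(n-1)$ in absolute value, so McDiarmid's inequality over the $2n$ independent inputs yields $\mathbb{P}(z(\overline{v}) - z(v^{\star}) \leq 0) \leq \exp(-n\overline{p}_1^2/16)$, and a union bound over the $d-1$ choices of $j \neq 1$ gives failure probability at most $(d-1)\exp(-n\overline{p}_1^2/16)$, as claimed.

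The only point that deserves a word of care — and the one I expect to be the main obstacle to a fully rigorous write-up — is why a single concentration bound suffices even though there are infinitely many feasible $\overline{v}$ with $\overline{v}_j$ maximal. This works because $z(\overline{v}) - z(v^{\star})$ is a nonnegative combination $\sum_{k\neq1} u_k Z_k$ of the finitely many quantities $Z_k = z(e_k) - z(e_1)$ with $\sum_{k\neq1}u_k \geq \tfrac12$, so it is enough to control the signs of the $Z_k$; equivalently, the reduced (now linear) program over the probability simplex attains its optimum at a vertex $e_k$. Everything else is the bookkeeping already carried out for the Noisy Output Model, and the only new ingredient is Lemma~\ref{lemma:support-recovery-noisy-input-probability}, which supplies the positivity of $\overline{p}_1$ needed for the McDiarmid step to be meaningful.
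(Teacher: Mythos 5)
Your proof is correct and follows essentially the same approach as the paper's, which simply observes that the argument of Lemma~\ref{lemma:one-coordinate} carries over verbatim with $p$ replaced by $\overline{p}$ and invokes Lemma~\ref{lemma:support-recovery-noisy-input-probability} for the positivity of $\overline{p}_1$. Your closing remark reducing the continuum of feasible $\overline{v}$ to the finitely many quantities $Z_k = z(e_k) - z(e_1)$ is a welcome clarification of a point that the paper's proof of Lemma~\ref{lemma:one-coordinate} leaves implicit.
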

\begin{proof}
The proof is identical to the proof of Lemma \ref{lemma:one-coordinate}, with $p$ replaced by $\overline{p}$. Lemma \ref{lemma:support-recovery-noisy-input-probability} guarantees that $\overline{p}_k > 0$ for all $k \in A$.
\end{proof}

\begin{proof}[Proof of Theorem \ref{thm:support-recovery-noisy-input-multiple}]
The proof is nearly identical to the proof of Theorem \ref{thm:multiple-coordinates}, and relies on Lemma \ref{lemma:support-recovery-noisy-input}.
\end{proof}
\begin{proof}[Proof of Corollary \ref{corollary:noisy-input-support-recovery}]
The proof is identical to the proof of Corollary \ref{corollary:recovery}, with $p$ replaced by $\overline{p}$.
\end{proof}

\begin{proof}[Proof of Corollary \ref{corollary:two-stage-noisy-input}]
Support recovery fails with probability at most \[(d-s)\sum_{k=1}^s (d+1 -k) \exp\left(-\frac{n \overline{p}_{k}^2}{16} \right).\] If it succeeds, the probability of the $L_2$ norm error exceeding $\delta$ is upper bounded by the value in Theorem \ref{thm:consistency-binary}.
 Then $\mathbb{P} \left( \Vert \hat{f}_n - f \Vert_2^2 > \delta \right)$ is at most
\begin{align*}
&(d-s)\sum_{k=1}^s (d+1 -k) \exp\left(-\frac{n \overline{p}_{k}^2}{16} \right)\\
&+ \frac{ \exp \left[ \left(2^s + 2\log(2) -1 \right) n^{\frac{s-1}{s}}\right] }{\exp \left[n^{\frac{s-1}{s} + \epsilon}\right]} +\left(\exp \left[n^{\frac{s-1}{s} + \epsilon}\right] + 1\right) \exp\left(- \frac{ \left(q_{\text{min}}\left(\delta\right) - q\left(S_0^{\star}, S_1^{\star} \right) \right)^2 n}{36} \right).
\end{align*}
Therefore, if $n = \omega(\log(d))$ and $n = e^{\omega(s^2)}$, the estimator is consistent under the assumptions of Corollary \ref{corollary:two-stage-noisy-input}.
\end{proof}

\end{document}